\newtheorem{theorem}{Theorem}[section]
\newtheorem{lemma}{Lemma}[section]
\newtheorem{definition}{Definition}[section]
\newtheorem{proposition}{Proposition}[section]
\newtheorem{remark}{Remark}[section]
\newenvironment{proof}{{\noindent \bf Proof:}}{\hfill$\Box$\medskip}
\definecolor{lred}{rgb}{1,0.8,0.8}
\definecolor{lblue}{rgb}{0.8,0.8,1}
\definecolor{dred}{rgb}{0.6,0,0}
\definecolor{dblue}{rgb}{0,0,0.5}
\definecolor{dgreen}{rgb}{0,0.5,0.5}
 \title{Linear convergence of the generalized PPA and several splitting methods for the composite inclusion problem}
 \author{Li Shen\footnote{Department of Mathematics, South China University of Technology, Tianhe District of Guangzhou, 510641, China (shen.li@mail.scut.edu.cn).}
  \ \ {\rm and}\ \ Shaohua Pan\footnote{Corresponding author. Department of Mathematics, South China University of Technology, Tianhe District of Guangzhou, China(shhpan@scut.edu.cn).}}
 \date{August 16, 2015; Revised: April 6, 2016}
\begin{document}

 \maketitle

 \begin{abstract}
  For the inclusion problem involving two maximal monotone operators,
  under the metric subregularity of the composite operator, we derive the linear convergence
  of the generalized proximal point algorithm and several splitting algorithms, which include
  the over-relaxed forward-backward splitting algorithm, the generalized Douglas-Rachford
  splitting algorithm and Davis' three-operator splitting algorithm. To the best of our knowledge,
  this linear convergence condition is weaker than the existing ones that almost all require
  the strong monotonicity of the composite operator. Withal, we give some sufficient conditions
  to ensure the metric subregularity of the composite operator. At last, the preliminary numerical
  performances on some toy examples  support the theoretical results.

  \medskip
  \noindent
  {\bf Keywords:} Linear convergence, metric subregularity, generalized PPA,
  over-relaxed FBS algorithm, generalized DRS algorithm, three-operator splitting algorithm
  \end{abstract}

  \section{Introduction}\label{sec1}
   Let $\mathbb{X},\mathbb{Y}$ and $\mathbb{Z}$ be the finite dimensional linear spaces endowed
   with the inner product $\langle\cdot,\cdot\rangle$ and its induced norm $\|\cdot\|$.
   Given the maximal monotone operators $\mathcal{A},\mathcal{B}\!:\mathbb{Z}\rightrightarrows\mathbb{Z}$
   and $\vartheta$-cocoercive operator $\mathcal{C}\!:\mathbb{Z}\rightrightarrows\mathbb{Z}$,
   we focus on the composite operator inclusion problem
   \begin{equation}\label{composite-inclusion}
    0 \in \mathcal{F}z\quad\ {\rm with}\ \ \mathcal{F}\!:=\mathcal{A}+\mathcal{B}+\mathcal{C}.
   \end{equation}
   We are interested in the case that one of
   $\mathcal{A},\mathcal{B}$ is single valued and Lipschitz continuous.
   Unless otherwise stated, we always assume that $\mathcal{F}^{-1}(0)\ne \emptyset$
   for problems \eqref{composite-inclusion}.

   \medskip

  The inclusion problem \eqref{composite-inclusion} has many applications such as the variational inequality problem \cite{PP90},
  the problem of finding a common point of closed convex sets \cite{BB95,BCL04} and
  covers many classes of convex optimization problems.
  Specifically,  we consider the following unconstrained nonsmooth composite convex minimization problem with the form that
 \begin{align}\label{convex-composite-prob}
 \min f(x)+g(\mathcal{D}x)+h(x)
 \end{align}
 where $f\!:\!\mathbb{Z}\to [-\infty,+\infty]$ and $g\!:\!\mathbb{Y}\to [-\infty,+\infty]$
 are low semicontinuous convex functions, $g\!:\!\mathbb{Z}\to (-\infty,+\infty)$ is continuous
 differentiable and gradient Lipschitz convex function and $\mathcal{D}:\mathbb{Z}\to\mathbb{Y}$ is linear operators.
 Involved in the dual variable $y\in \mathbb{Y}$, it is easy to check that solving the optimization
 problem \eqref{convex-composite-prob} is equivalent to solve the inclusion
 \begin{equation}\label{equi-convex-composite}
 (0,0) \in (\mathcal{A}+\mathcal{B}+\mathcal{C})(x,y)
 \end{equation}
 with $\mathcal{C}(x,y) = \big(\nabla h(x),0\big)$ being the cocoercive operator,
 $\mathcal{A}(x,y) = (\mathcal{D}^{*}y, -\mathcal{D}x)$ being the single valued, Lipschitz continuous operator and
 $\mathcal{B}(x,y) = \big(\partial f(x),\partial g^*(y)\big)$ being the maximal monotone operator
 where $\partial g^*$ is the conjugate function of $g$, which indicates that the composite convex
 problem \eqref{convex-composite-prob} can be reformulated as a special case
 of \eqref{composite-inclusion} with the above specified operators $\mathcal{A},\mathcal{B},\mathcal{C}$. Furthermore, when $g$
 is also continuous differentiable and gradient Lipschitz, the problem \eqref{convex-composite-prob}
 can be directly represented as the form of \eqref{composite-inclusion} with $\mathcal{A}=\partial f$,
 $\mathcal{B}=\mathcal{D}^{*}\circ\nabla g\circ\mathcal{D}$ and $\mathcal{C}=\nabla h$.
  Additionally, the inclusion problem \eqref{composite-inclusion} is highly related to the linearly
  constrained two-block separable convex minimization problem
  which has many applications as listed in \cite{BPCPE12} and takes the following form that
  \begin{align*}
   &\min_{u\in\mathbb{X},v\in\mathbb{Y}} f(u)+g(v)\nonumber\\
   &\quad {\rm s.t.}\ \ \mathcal{G}u+\mathcal{H}v=c.
  \end{align*}
  In addition, it has a dual problem falling into framework of \eqref{convex-composite-prob} with the following form
  \begin{equation*}
   \min_{z\in\mathbb{Z}}\ \Phi(z):=f^*(\mathcal{G}^*z)+g^*(\mathcal{H}^*z)-\langle c,z\rangle,
  \end{equation*}
  which also can be reformulated as a special case of inclusion \eqref{composite-inclusion}.
  Here $f\!:\mathbb{X}\to(-\infty,+\infty]$ and $g\!:\mathbb{Y}\to(-\infty,+\infty]$
  are closed proper convex functions whose conjugate function are written as $f^*$ and $g^*$,
  respectively, $\mathcal{G}\!:\mathbb{X}\to\mathbb{Z}$ and $\mathcal{H}\!:\mathbb{Y}\!\to\!\mathbb{Z}$
  are linear operators whose adjoint are $\mathcal{G}^*$ and $\mathcal{H}^*$, respectively,
  and $c\in\mathbb{Z}$ is a vector.

  \medskip

  Let $\mathcal{J}_{\gamma\mathcal{T}}$ denote the resolvent of an operator $\mathcal{T}$ of index $\gamma\!>\!0$,
  i.e., $\mathcal{J}_{\gamma\mathcal{T}}\!=\!(\mathcal{I}+\!\gamma\mathcal{T})^{-1}$. If by chance the calculation
  of $\mathcal{J}_{\gamma \mathcal{F}}$ is easy, then the classical proximal point algorithm
  \cite{Roc76} or its over-relaxed version (the generalized PPA \cite{EB92}) is a desirable solver
  for the inclusion problem  \eqref{composite-inclusion}. However,
  in practice the estimation of $\mathcal{J}_{\gamma \mathcal{F}}$ is usually much more difficult
  than that of $\mathcal{J}_{\gamma \mathcal{A}}$, $\mathcal{J}_{\gamma \mathcal{B}}$ and $\mathcal{J}_{\gamma \mathcal{C}}$.
  Motivated by this, Davis and Yin \cite{Davis15} proposed a three operator splitting method
  for the inclusion \eqref{composite-inclusion} by using $\mathcal{J}_{\gamma\mathcal{A}}$ and
  $\mathcal{J}_{\gamma\mathcal{B}}$. Moreover, when one of $\mathcal{B}$ and $\mathcal{C}$ vanishes,
  several two operator splitting algorithms, including the forward-backward splitting (FBS)
  algorithm \cite{Passty79,Gabay83,CRoc97,CR06}, the Peaceman-Rachford splitting (PRS) algorithm
  \cite{LM79,PR54}, and the Douglas-Rachford splitting (DRS) algorithm \cite{LM79,DR56},
  have been developed by using $\mathcal{J}_{\gamma\mathcal{A}}$ or/and $\mathcal{J}_{\gamma\mathcal{B}}$, $\mathcal{J}_{\gamma\mathcal{C}}$.
  The convergence of these splitting algorithms have been well studied. In view of the strong assumption
  required by the linear convergence, some authors recently focus on the iteration complexity of these algorithms
  \cite{HYuan14,Davis151,LFP15}. By contrast, the study for
  their convergence rate is quite deficient. To the best of our knowledge, several existing linear
  convergence rate results (see \cite{LM79,CRoc97,Giselsson15,Davis152,Davis15}) all require
  the strong monotonicity of one of the operators $\mathcal{A}$, $\mathcal{B}$ and $\mathcal{C}$
  and single valued and Lipschitz continuous property of $\mathcal{B}$. It is well known that
  the strong monotonicity assumption of $\mathcal{A},\mathcal{B}$ or $\mathcal{C}$ is too stringent.
  Recently, Liang et al. \cite{LFP15} and Bauschke et al. \cite{BNP15} establish the local
  linear convergence rate of substantial splitting algorithms based on the Krasnosel'ski\u{\i}-Mann
  fixed point iteration\cite{KM55,M53} scheme with the metric subregularity assumption and
  the (bounded) linear regularity assumption  on the fixed point operator at a point of its graph,
  respectively. The condition used by Liang et al. \cite{LFP15} is shown to be equivalent to the metric subregularity
  condition of $\mathcal{F}$ at a point $(x^*,0)\in {\rm gph}\,\mathcal{F}$ for the generalized PPA algorithm
  and over-relaxed FBS algorithm according to lemma \ref{relation-AB} in the following.

  \medskip

  The main contribution of this paper is to derive the linear convergence rate of the above
  several splitting algorithms and Davis' splitting algorithm \cite{Davis15} under the metric
  subregularity of the operator $\mathcal{F}$ at a point $(\overline{x},0)$ of its graph.
  In addition, as will be shown in the section \ref{sec3}, the metric subregularity of an operator at a point
  of its graph is weaker than some existing regularization conditions such as the strongly monotone
  and the projective type error bound \cite{Tseng95} on the fixed point operator.

  \section{Preliminaries}\label{sec2}

  This section recalls some necessary concepts and lemmas that will be used in the subsequent analysis.
  Firstly, we introduce some concepts associated to an operator
  $\mathcal{T}\!:\mathbb{X}\rightrightarrows\mathbb{Y}$, for which we make no difference from its graph
  ${\rm gph}\,\mathcal{T}:=\left\{(x,y)\in\mathbb{X}\times\mathbb{Y}\ |\ y\in\mathcal{T}(x)\right\}$.
  The domain and range of an operator $\mathcal{T}\!:\mathbb{X}\rightrightarrows\mathbb{Y}$
  are respectively defined as
  \[
    {\rm dom}\,\mathcal{T}=\big\{x\in\mathbb{X}\ |\ \mathcal{T}x\ne \emptyset\big\}\ \ {\rm and}\ \
    {\rm ran}\,\mathcal{T}=\big\{y\in\mathbb{Y}\ |\ \exists x\in\mathbb{X}\ {\rm such\  that}\ (x,y)\in\mathcal{T}\big\}.
  \]
  The inverse of $\mathcal{T}$ is given by $\mathcal{T}^{-1}:=\{(y,x)\in\mathbb{Y}\times\mathbb{X}\ |\ (x,y)\in\mathcal{T}\}$.
  For any $c\in\mathbb{R}$, we let $c\mathcal{T}=\{(x,cy)\ |\ (x,y)\in\mathcal{T}\}$,
  and if $\mathcal{G}$ and $\mathcal{H}$ are any operators from $\mathbb{X}$ to $\mathbb{Y}$,
  we let
  \[
    \mathcal{G}+\mathcal{H}=\big\{(x,y+z)\ |\ (x,y)\in\mathcal{G},\,(x,z)\in\mathcal{H}\big\}.
  \]
  An operator $\mathcal{T}\!:\mathbb{X}\rightrightarrows\mathbb{X}$ is said to be firmly nonexpansive if
  \(
    \langle x-y,u-v\rangle\ge \|u-v\|^2
  \)
  for $(x,u),(y,v)\in\mathcal{T}$. Moreover, $\mathcal{T}$ is said to be nonexpansive if
  $\|\mathcal{T}u-\mathcal{T}v\|\le\|x-y\|$. By \cite[Proposition 5.14]{BC11}
  we have the following result for a nonexpansive operator.
  \begin{lemma}\label{main-lemma1}
   Let $\mathcal{T}\!:\mathbb{X}\to\mathbb{X}$ be a nonexpansive operator with ${\rm Fix}\,\mathcal{T}\ne \emptyset$,
   and $\{\mu_k\}$ be a sequence in $[0,1]$ satisfying $\sum_{k=0}^{\infty}\mu_k(1-\mu_k)=+\infty$. Let $\{w^k\}$
   be generated by
  \begin{align}\label{KM}
    w^{k+1} = w^{k} + \mu_k(\mathcal{T}w^k-w^{k})\quad{\rm with}\ \ w^0\in\mathbb{X}.
  \end{align}
  Then, the sequences $\{w^k\}$ and $\{\mathcal{T}w^{k}\}$ converge to a point in ${\rm Fix}\,\mathcal{T}$
  and for any $w \in {\rm Fix}\,\mathcal{T}$,
  \[
    \|w^{k+1}-w\|^2 \le \|w^{k}-w\|^2-\mu_k(1-\mu_k)\|\mathcal{T}w^{k}-w^k\|^2
    \quad{\rm for\ all}\ \ k\in\mathbb{N}.
  \]
 \end{lemma}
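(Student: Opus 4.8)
The plan is to follow the classical Krasnosel'ski\u{\i}--Mann argument, which in the present finite-dimensional setting requires no demiclosedness principle. First I would establish the displayed quadratic estimate, since it drives everything else. Fix $w\in{\rm Fix}\,\mathcal{T}$. Writing $w^{k+1}-w=(1-\mu_k)(w^k-w)+\mu_k(\mathcal{T}w^k-w)$ and using the elementary identity $\|(1-t)a+tb\|^2=(1-t)\|a\|^2+t\|b\|^2-t(1-t)\|a-b\|^2$ valid for $t\in[0,1]$, one obtains
\[
\|w^{k+1}-w\|^2=(1-\mu_k)\|w^k-w\|^2+\mu_k\|\mathcal{T}w^k-w\|^2-\mu_k(1-\mu_k)\|\mathcal{T}w^k-w^k\|^2.
\]
Since $w=\mathcal{T}w$ and $\mathcal{T}$ is nonexpansive, $\|\mathcal{T}w^k-w\|=\|\mathcal{T}w^k-\mathcal{T}w\|\le\|w^k-w\|$; substituting this bound yields exactly the claimed inequality.

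Next I would extract its consequences. The inequality shows $\{\|w^k-w\|\}$ is nonincreasing, hence convergent, so $\{w^k\}$ is bounded; moreover, summing over $k=0,\dots,N$ and letting $N\to\infty$ gives $\sum_{k=0}^{\infty}\mu_k(1-\mu_k)\|\mathcal{T}w^k-w^k\|^2\le\|w^0-w\|^2<\infty$, which together with the hypothesis $\sum_k\mu_k(1-\mu_k)=+\infty$ forces $\liminf_k\|\mathcal{T}w^k-w^k\|=0$. To promote this $\liminf$ to a true limit I would check that $\{\|\mathcal{T}w^k-w^k\|\}$ is itself nonincreasing: from $\mathcal{T}w^k-w^{k+1}=(1-\mu_k)(\mathcal{T}w^k-w^k)$, the triangle inequality, and nonexpansiveness,
\[
\|\mathcal{T}w^{k+1}-w^{k+1}\|\le\|\mathcal{T}w^{k+1}-\mathcal{T}w^k\|+(1-\mu_k)\|\mathcal{T}w^k-w^k\|\le\mu_k\|\mathcal{T}w^k-w^k\|+(1-\mu_k)\|\mathcal{T}w^k-w^k\|,
\]
which equals $\|\mathcal{T}w^k-w^k\|$. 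A nonincreasing sequence of nonnegative numbers whose $\liminf$ is $0$ converges to $0$, so $\|\mathcal{T}w^k-w^k\|\to0$.

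Finally I would pass to the limit. Being bounded in the finite-dimensional space $\mathbb{X}$, the sequence $\{w^k\}$ has a subsequence $w^{k_j}\to\overline{w}$; continuity of $\mathcal{T}$ gives $\mathcal{T}w^{k_j}\to\mathcal{T}\overline{w}$, and since $\|\mathcal{T}w^{k_j}-w^{k_j}\|\to0$ we get $\mathcal{T}\overline{w}=\overline{w}$, i.e. $\overline{w}\in{\rm Fix}\,\mathcal{T}$. Applying the first part with $w=\overline{w}$, the sequence $\{\|w^k-\overline{w}\|\}$ converges, and as it has a subsequence tending to $0$ the whole sequence tends to $0$; hence $w^k\to\overline{w}$ and then $\mathcal{T}w^k\to\mathcal{T}\overline{w}=\overline{w}$ as well. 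The only step requiring real care is the monotonicity argument turning the summability estimate into $\|\mathcal{T}w^k-w^k\|\to0$; the rest is the algebraic identity plus a routine Fej\'er-monotonicity and compactness argument. (This is precisely \cite[Proposition~5.14]{BC11} specialized to finite dimensions.)
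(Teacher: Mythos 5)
Your proof is correct. Note that the paper does not actually prove this lemma---it simply invokes \cite[Proposition 5.14]{BC11}---so there is no in-paper argument to compare against; what you have written is a correct, self-contained derivation of exactly that cited result, specialized to finite dimensions. All the key steps check out: the convexity identity gives the quadratic estimate, Fej\'er monotonicity plus summability gives $\liminf_k\|\mathcal{T}w^k-w^k\|=0$, and your observation that $\|\mathcal{T}w^{k+1}-w^{k+1}\|\le\|w^{k+1}-w^k\|+(1-\mu_k)\|\mathcal{T}w^k-w^k\|=\|\mathcal{T}w^k-w^k\|$ correctly upgrades this to a genuine limit; the final compactness-and-continuity step legitimately replaces the demiclosedness principle that the Hilbert-space version of the argument would require.
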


  Next we recall from the monograph \cite{BC11} the concept of the $\alpha$-averaged operator.
  \begin{definition}\label{averaged}
   Let $D$ be a nonempty subset of $\mathbb{X}$, $\mathcal{T}\!:D\to\mathbb{X}$ be a nonexpansive operator,
   and $\alpha\in(0,1)$ be a constant. Then the operator $\mathcal{T}$ is said to be $\alpha$-averaged
   if there exists a nonexpansive operator $\mathcal{R}\!:D\to\mathbb{X}$ such that
   $\mathcal{T}=(1-\alpha)\mathcal{I}d+\alpha\mathcal{R}$.
  \end{definition}
   By \cite[Prop. 5.15]{BC11} we have the following result for an $\alpha$-averaged operator
   $\mathcal{T}\!:\mathbb{X}\to\mathbb{X}$.
  \begin{lemma}\label{main-lemma2}
   Let $\mathcal{T}\!:\mathbb{X}\to\mathbb{X}$ be an $\alpha$-averaged operator of $\alpha\in(0,1)$
   with ${\rm Fix}\,\mathcal{T}\ne \emptyset$, and $\{\mu_k\}\subseteq [0,\frac{1}{\alpha}]$ be a sequence
   satisfying $\sum_{k=0}^{\infty}\mu_k(\frac{1}{\alpha}-\mu_k)=+\infty$. Let $\{w^k\}$
   be generated by \eqref{KM}. Then, $\{w^k\}$ and $\{\mathcal{T}w^{k}\}$ converge to a point in ${\rm Fix}\,\mathcal{T}$
  and for any $w \in {\rm Fix}\,\mathcal{T}$,
  \[
    \|w^{k+1}-w\|^2 \le \|w^{k}-w\|^2-\mu_k(\alpha^{-1}-\mu_k)\|\mathcal{T}w^{k}-w^k\|^2
    \quad{\rm for\ all}\ \ k\in\mathbb{N}.
  \]
  \end{lemma}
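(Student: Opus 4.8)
The plan is to reduce the statement to Lemma~\ref{main-lemma1} by exploiting the decomposition furnished by Definition~\ref{averaged}. Write $\mathcal{T}=(1-\alpha)\mathcal{I}d+\alpha\mathcal{R}$ with $\mathcal{R}\!:\mathbb{X}\to\mathbb{X}$ nonexpansive. The first step is the elementary identity $\mathcal{T}w^k-w^k=\alpha(\mathcal{R}w^k-w^k)$, which shows that the iteration \eqref{KM} for $\mathcal{T}$ with relaxation parameter $\mu_k$ coincides with the iteration \eqref{KM} for $\mathcal{R}$ with relaxation parameter $\lambda_k:=\alpha\mu_k$; that is, $w^{k+1}=w^k+\lambda_k(\mathcal{R}w^k-w^k)$.

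Next I would verify that the hypotheses of Lemma~\ref{main-lemma1} are met by $\mathcal{R}$ and $\{\lambda_k\}$. Since $\mu_k\in[0,\alpha^{-1}]$ we get $\lambda_k\in[0,1]$, and $\sum_{k=0}^{\infty}\lambda_k(1-\lambda_k)=\alpha^2\sum_{k=0}^{\infty}\mu_k(\alpha^{-1}-\mu_k)=+\infty$ by assumption. Moreover ${\rm Fix}\,\mathcal{R}={\rm Fix}\,\mathcal{T}$: from $w=(1-\alpha)w+\alpha\mathcal{R}w$ one deduces $w=\mathcal{R}w$ and conversely, so in particular ${\rm Fix}\,\mathcal{R}\ne\emptyset$.

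Then, applying Lemma~\ref{main-lemma1} to $\mathcal{R}$, the sequences $\{w^k\}$ and $\{\mathcal{R}w^k\}$ converge to some $\overline{w}\in{\rm Fix}\,\mathcal{R}={\rm Fix}\,\mathcal{T}$; since $\mathcal{T}w^k=(1-\alpha)w^k+\alpha\mathcal{R}w^k$, the sequence $\{\mathcal{T}w^k\}$ also converges to $\overline{w}$. Finally, the estimate of Lemma~\ref{main-lemma1} reads $\|w^{k+1}-w\|^2\le\|w^k-w\|^2-\lambda_k(1-\lambda_k)\|\mathcal{R}w^k-w^k\|^2$ for all $w\in{\rm Fix}\,\mathcal{T}$; substituting $\lambda_k=\alpha\mu_k$ together with $\|\mathcal{R}w^k-w^k\|=\alpha^{-1}\|\mathcal{T}w^k-w^k\|$ yields $\lambda_k(1-\lambda_k)\|\mathcal{R}w^k-w^k\|^2=\mu_k(\alpha^{-1}-\mu_k)\|\mathcal{T}w^k-w^k\|^2$, which is exactly the claimed inequality.

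The argument is essentially bookkeeping and presents no serious obstacle; the only point that deserves a little care is the change of relaxation variable $\lambda_k=\alpha\mu_k$ and the check that it preserves the admissibility condition $\sum_k\lambda_k(1-\lambda_k)=+\infty$, which is precisely where the specific form of the hypothesis $\sum_k\mu_k(\alpha^{-1}-\mu_k)=+\infty$ enters.
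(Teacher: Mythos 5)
Your proof is correct and complete: the rescaling $\lambda_k=\alpha\mu_k$ turns the averaged-operator iteration into the nonexpansive KM iteration for $\mathcal{R}$, and all the hypotheses and the final inequality transfer exactly as you compute. The paper itself gives no proof of this lemma (it simply cites \cite[Prop.\ 5.15]{BC11}), and your reduction to Lemma \ref{main-lemma1} is precisely the standard argument behind that citation, so there is nothing to compare beyond noting that your version makes the dependence on the first lemma explicit.
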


   The following definition is about the metric subregular \cite{DR09} of
  $\mathcal{T}$ at $(\overline{x},\overline{y}) \in{\rm gph}\,\mathcal{T}$.

  \begin{definition}\label{metric-subregular}
   An operator $\mathcal{T}\!:\mathbb{X}\rightrightarrows\mathbb{Y}$ is metrically subregular
   at $(\overline{x},\overline{y})\in{\rm gph}\mathcal{T}$ with constant $\kappa>0$
   if there exists a neighborhood $U$ of $\overline{x}$ such that
  \begin{equation*}
    {\rm dist}\big(x,\mathcal{T}^{-1}(\overline{y})\big) \le \kappa{\rm dist}\big(\overline{y},\mathcal{T}x\big)\quad {\rm \ for \ all}\ x\in U.
  \end{equation*}
  \end{definition}

 \section{Linear convergence of several splitting algorithms}\label{sec3}
  In the first four subsection, we derive the linear convergence rate of the generalized PPA
  with both $\mathcal{B}$ and $\mathcal{C}$ vanishing, the over-relaxed FBS and the generalized DRS algorithm
  for problem \eqref{composite-inclusion} with the corresponding $\mathcal{B}$ and $\mathcal{C}$ vanishing,
  and Davis-Yin's three-operator splitting algorithm
  for problem \eqref{composite-inclusion} under assumption that $\mathcal{F}$ is
  metric subregular at a point $(z^*,0)\in {\rm gph}\,\mathcal{F}$. In the last subsection,
  we will discuss the equivalence on metric subregularity
  condition between the inclusion operator $\mathcal{F}$ in \eqref{composite-inclusion} and
  fixed point operator in \cite{LFP15} for generalized PPA and over-relaxed FBS algorithm.
  Some sufficient conditions are also given to ensure the metric subregular of $\mathcal{F}$ at $(z^*,0)\in {\rm gph}\,\mathcal{F}$.


  \subsection{Linear convergence of generalized PPA}\label{subsec3}

   The generalized PPA \cite{EB92} for problem \eqref{composite-inclusion} with $\mathcal{B}$ and $\mathcal{C}$ vanishing
   takes the iteration step
   \begin{equation}\label{GPPA}
      z^{k+1}=z^k+\lambda_k(\mathcal{J}_{\gamma\mathcal{F}}z^k-z^k)\ \ {\rm for\ some}\ \gamma>0.
   \end{equation}
   When $\lambda_k\equiv 1$, the iteration \eqref{GPPA} reduces to that of the classical PPA \cite{Mar70,Roc76}.
   The linear convergence rate of the classical PPA is first established in \cite{Roc76} under
   the Lipschitz continuity of $\mathcal{F}^{-1}$ at $0$.
   Later, Artacho et al. \cite{ADG07} and  Leventhal \cite{Leventhal09} derived the linear convergence
   rate of the classical PPA under the metric regularity of $\mathcal{F}$ at a point $(\overline{z},0)\in{\rm gph}\mathcal{F}$
   and the metric subregularity of $\mathcal{F}$ at a point $(\overline{z},0)\in{\rm gph}\mathcal{F}$, respectively.
   Besides, the latter is weaker than the metric regularity of $\mathcal{F}$ at a point
   $(\overline{z},0)\in{\rm gph}\mathcal{F}$ and the Lipschitz continuity of $\mathcal{F}^{-1}$ near $0$
   in the sense of \cite{Roc76}. We next establish the linear convergence rate of the generalized PPA
   under the same assumption as in \cite{Leventhal09}.
   \begin{theorem}\label{theorem-GPPA}
    Assume that the operator $\mathcal{F}$ is maximally monotone. Let $\{z^{k}\}$ be given by
    the generalized PPA with $\{\lambda_k\}\subseteq[0,2]$ satisfying $\sum_{k=0}^{\infty}\lambda_k(2-\lambda_k)=+\infty$.
    Then,
   \begin{description}
   \item[(a)] the sequence $\{z^k\}$ converges to a point $z^*\in\mathcal{F}^{-1}(0)$, and moreover, it holds that
              \begin{equation}\label{GPPA-ineq}
                \|z^{k+1}\!-z\|^2 \le \|z^{k}\!-z\|^2-\lambda_k(2-\lambda_k)\|\mathcal{J}_{\gamma\mathcal{F}}z^{k}\!-z^k\|^2
                \quad \forall z\in\mathcal{F}^{-1}(0)\ {\rm and}\ \forall k\in\mathbb{N}.
              \end{equation}
   \item[(b)] If in addition $\mathcal{F}$ is metrically subregular at $(z^*,0)\in{\rm gph}\,\mathcal{F}$ with constant $\kappa>0$,
               then there exists $\overline{k}\in\mathbb{N}$ such that
               \begin{equation*}
                {\rm dist}\big(z^{k+1},\mathcal{F}^{-1}(0)\big)
                \le\sqrt{1-\frac{\lambda_k(2-\lambda_k)\gamma^2}{(\gamma+\kappa)^2}}\,{\rm dist}\big(z^{k},\mathcal{F}^{-1}(0)\big)
                \ \ {\rm for}\ k\ge \overline{k}.
               \end{equation*}
   \end{description}
  \end{theorem}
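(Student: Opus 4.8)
The plan is to obtain part~(a) directly from the averaged-operator theory recalled in Section~\ref{sec2}, and then to bootstrap part~(b) from the Fej\'er-type inequality \eqref{GPPA-ineq} together with the metric subregularity estimate evaluated at the resolvent iterates.

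For part~(a), I would first note that since $\mathcal{F}$ is maximally monotone, its resolvent $\mathcal{J}_{\gamma\mathcal{F}}=(\mathcal{I}+\gamma\mathcal{F})^{-1}$ is everywhere defined, single valued and firmly nonexpansive, hence $\tfrac12$-averaged in the sense of Definition~\ref{averaged}. Moreover $z\in{\rm Fix}\,\mathcal{J}_{\gamma\mathcal{F}}$ is equivalent to $0\in\mathcal{F}z$, so ${\rm Fix}\,\mathcal{J}_{\gamma\mathcal{F}}=\mathcal{F}^{-1}(0)\ne\emptyset$. The iteration \eqref{GPPA} is precisely \eqref{KM} with $\mathcal{T}=\mathcal{J}_{\gamma\mathcal{F}}$, $\mu_k=\lambda_k$ and $\alpha=\tfrac12$; since $\{\lambda_k\}\subseteq[0,2]=[0,\alpha^{-1}]$ and $\sum_k\lambda_k(2-\lambda_k)=+\infty$, Lemma~\ref{main-lemma2} applies verbatim and delivers both the convergence of $\{z^k\}$ to some $z^*\in\mathcal{F}^{-1}(0)$ and inequality \eqref{GPPA-ineq}.

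For part~(b), write $\overline{z}^k:=\mathcal{J}_{\gamma\mathcal{F}}z^k$ and $S:=\mathcal{F}^{-1}(0)$, which is closed and convex because $\mathcal{F}$ is maximally monotone. By the definition of the resolvent, $\gamma^{-1}(z^k-\overline{z}^k)\in\mathcal{F}\overline{z}^k$, i.e. $(\overline{z}^k,\gamma^{-1}(z^k-\overline{z}^k))\in{\rm gph}\,\mathcal{F}$. Since $\mathcal{J}_{\gamma\mathcal{F}}$ is nonexpansive and fixes $z^*$, the convergence $z^k\to z^*$ from part~(a) forces $\overline{z}^k\to z^*$ as well, so there is $\overline{k}\in\mathbb{N}$ with $\overline{z}^k$ lying in the neighborhood $U$ of $z^*$ furnished by Definition~\ref{metric-subregular} for every $k\ge\overline{k}$. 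For such $k$, metric subregularity of $\mathcal{F}$ at $(z^*,0)$ gives
\[
  {\rm dist}\big(\overline{z}^k,S\big)\ \le\ \kappa\,{\rm dist}\big(0,\mathcal{F}\overline{z}^k\big)\ \le\ \frac{\kappa}{\gamma}\,\|z^k-\overline{z}^k\|,
\]
whence, by the triangle inequality,
\[
  {\rm dist}\big(z^k,S\big)\ \le\ \|z^k-\overline{z}^k\|+{\rm dist}\big(\overline{z}^k,S\big)\ \le\ \frac{\gamma+\kappa}{\gamma}\,\|z^k-\overline{z}^k\|,
\]
that is, $\|z^k-\overline{z}^k\|^2\ge\frac{\gamma^2}{(\gamma+\kappa)^2}\,{\rm dist}(z^k,S)^2$. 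Taking $z$ to be the projection of $z^k$ onto $S$ in \eqref{GPPA-ineq} gives
\[
  {\rm dist}\big(z^{k+1},S\big)^2\ \le\ \|z^{k+1}-z\|^2\ \le\ {\rm dist}\big(z^k,S\big)^2-\lambda_k(2-\lambda_k)\|z^k-\overline{z}^k\|^2,
\]
and substituting the lower bound on $\|z^k-\overline{z}^k\|^2$ yields
\[
  {\rm dist}\big(z^{k+1},S\big)^2\ \le\ \Big(1-\tfrac{\lambda_k(2-\lambda_k)\gamma^2}{(\gamma+\kappa)^2}\Big)\,{\rm dist}\big(z^k,S\big)^2,
\]
where the bracketed factor is nonnegative since $\lambda_k(2-\lambda_k)\le 1$ and $\gamma^2\le(\gamma+\kappa)^2$; taking square roots gives the asserted estimate for $k\ge\overline{k}$.

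The genuinely delicate point is not any single computation but the localization: the subregularity estimate only holds on $U$, so one must argue that the points $\overline{z}^k$ at which it is invoked eventually enter $U$ — which is exactly why part~(a) (convergence of the iterates) is needed before part~(b), and why the conclusion is stated only for $k$ beyond some $\overline{k}$. The remaining work is the triangle-inequality chaining that converts the subregularity bound at $\overline{z}^k$ into a contraction factor for ${\rm dist}(\cdot,S)$ measured at the iterates $z^k$ themselves.
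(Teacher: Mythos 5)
Your proposal is correct and follows essentially the same route as the paper's own proof: part (a) via firm nonexpansiveness of the resolvent and Lemma \ref{main-lemma2}, and part (b) by applying metric subregularity at the resolvent points $\mathcal{J}_{\gamma\mathcal{F}}z^k$, chaining with the triangle inequality to bound ${\rm dist}(z^k,\mathcal{F}^{-1}(0))$, and substituting into \eqref{GPPA-ineq} evaluated at the projection of $z^k$. Your added observation that the contraction factor is nonnegative (since $\lambda_k(2-\lambda_k)\le 1$ and $\gamma^2\le(\gamma+\kappa)^2$) is a small but welcome detail the paper leaves implicit.
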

  \begin{proof}
  (a) Since $\mathcal{F}$ is maximally monotone, $\mathcal{J}_{\gamma\mathcal{F}}$
  is firmly nonexpansive (see \cite[Theorem 12.12]{RW98}), and so is $({1}/{2})$-averaged by
  \cite[Remark 4.24]{BC11}. The result follows by Lemma \ref{main-lemma2}.

  \medskip
  \noindent
  (b) Let $x^{k} = \mathcal{J}_{\gamma\mathcal{F}}z^{k}$.
  Notice that $z^*=\mathcal{J}_{\gamma\mathcal{F}}z^*$.
  We have $\|x^k-z^*\|\le\|z^k-z^*\|$ by the nonexpansiveness of $\mathcal{J}_{\gamma\mathcal{F}}$,
  which by part (a) implies that $x^k\to z^*$.
  Since $\mathcal{F}$ is metrically subregular at $(z^*,0)\in{\rm gph}\,\mathcal{F}$ with constant $\kappa>0$,
  there exists $\overline{k}\in\mathbb{N}$ such that
   \[
     {\rm dist}(x^{k},\mathcal{F}^{-1}(0)) \le\kappa{\rm dist}(0,\mathcal{F}(x^{k}))
   \le \frac{\kappa}{\gamma}\|z^{k}-x^{k}\|\quad\ {\rm for\ all}\ k\ge\overline{k},
   \]
   where the last inequality is due to
   \(
     \gamma^{-1}(z^k-x^{k})\in \mathcal{F}x^{k}
   \)
  implied by $x^{k} = \mathcal{J}_{\gamma\mathcal{F}}z^{k}$. Then,
  \[
   {\rm dist}(z^{k},\mathcal{F}^{-1}(0))\leq{\rm dist}(x^{k},\mathcal{F}^{-1}(0))+\|z^{k}-x^{k}\|
   \leq  (1+\frac{\kappa}{\gamma})\|z^{k}-x^{k}\|\ \ {\rm for}\ k\ge \overline{k}.
  \]
  Combining the last inequality with inequality \eqref{GPPA-ineq} yields that for all $k\ge\overline{k}$,
  \begin{align*}
  {\rm dist}(z^{k+1},\mathcal{F}^{-1}(0))^2
  &\leq \|z^{k+1}-\Pi_{\mathcal{F}^{-1}(0)}(z^{k})\|^2\nonumber\\
  &\leq \|z^{k}-\Pi_{\mathcal{F}^{-1}(0)}(z^{k})\|^2-\lambda_k(2-\lambda_k)\|z^{k}-x^k\|^2\nonumber\\
  &={\rm dist}(z^{k},\mathcal{F}^{-1}(0))^2 -\lambda_k(2-\lambda_k)\|z^{k}-x^k\|^2\nonumber\\
  &\leq \left[1-\frac{\lambda_k(2-\lambda_k)\gamma^2}{(\gamma+\kappa)^2}\right]{\rm dist}(z^{k},\mathcal{F}^{-1}(0))^2.
  \end{align*}
  where $\Pi_{\mathcal{F}^{-1}(0)}(\cdot)$ is the projection operator onto $\mathcal{F}^{-1}(0)$.
  The proof is completed.
  \end{proof}

  It is worthwhile to point out that Corman and Yuan \cite{CYuan14} derived
  the linear convergence rate of the generalized PPA under the strong monotonicity of $\mathcal{F}$,
  which is more stringent than the metric subregularity of
  $\mathcal{F}$ at $(\overline{x},\overline{y})\!\in{\rm gph}\mathcal{F}$.
  We also observe that  Liang et al. \cite{LFP15}
  establish the similar local linear convergence rate under the metric subregularity of
  $\big(\mathcal{I}-\mathcal{J}_{\gamma\mathcal{F}}\big)$ at $(z^*,0)\in {\rm gph}\,(\mathcal{I}-\mathcal{J}_{\gamma\mathcal{F}})$.
  By the Lemma \ref{relation-AB} in subsection \ref{relation},
  we will show that this condition is equivalent to the metric subregularity of
  $\mathcal{F}$ at $(z^*,0)\in {\rm gph}\mathcal{F}$. Very recently, Tao and Yuan \cite{TaoY16} also
  established the linear convergence rate of the generalized PPA under the Lipschitz continuity of $\mathcal{F}^{-1}$ 
  near 0 in the sense of \cite{Roc76}, which is stronger than the metric subregularity of 
  $\mathcal{F}$ at $(z,0)\in {\rm gph}\,\mathcal{F}$. 
 \subsection{Over-relaxed forward-backward splitting algorithm}\label{subsec3.2}

  The over-relaxed FBS algorithm for \eqref{composite-inclusion}
  with $\mathcal{B}$ vanishing takes the iteration steps:
  \begin{equation}\label{RFBS}
   z^{k+1}=z^k+\lambda_k(\mathcal{J}_{\gamma \mathcal{A}}\big(\mathcal{I}-\gamma\mathcal{C})z^k-z^k\big),
  \end{equation}
  which takes the form of equation \eqref{KM} with
  $\mathcal{T}:=\mathcal{J}_{\gamma\mathcal{A}}(\mathcal{I}-\gamma\mathcal{C})$ where $\gamma>0$ is the stepsize.
  When $\lambda_k\equiv 1$, the iteration \eqref{RFBS} reduces to the FBS algorithm studied in
  \cite{Gabay83,Passty79,CR06}.

  \medskip

  For the sequence $\{z^k\}$ generated by \eqref{RFBS},
  we have the following linear convergence result under the metric subregularity of $\mathcal{F}:=\mathcal{A}+\mathcal{C}$ at a point
  $(z^*,0)\in{\rm gph}\,\mathcal{F}$.
  \begin{theorem}\label{convergence-RFBS}
   Let $\mathcal{B}$ be $\beta$-cocoercive and  $\{z^k\}$ be the sequence generated by \eqref{RFBS}
   with $\gamma\!\in\!(0,2\beta)$ and $\lambda_k\!\in\![0,\delta]$
   such that $\sum_{k=0}^{\infty}\lambda_k(\delta\!-\!\lambda_k)\!=\!+\infty$,
   where $\delta\!=\!\min(1,\frac{\beta}{\gamma})\!+\!\frac{1}{2}$. Then,
   \begin{description}
    \item[(a)] the sequences $\{z^k\}$ and $\{\mathcal{T}z^k\}$ converge to a point $z^*\in{\rm Fix}\,\mathcal{T}=\mathcal{F}^{-1}(0)$,
               and
               \begin{equation}\label{RFBS-ineq1}
                \|z^{k+1}-z\|^2 \le \|z^{k}-z\|^2-\lambda_k(\delta-\lambda_k)\|\mathcal{T}z^{k}-z^k\|^2
                \quad\ \forall z\in \mathcal{F}^{-1}(0)\ {\rm and}\ \forall k\in\mathbb{N}.
               \end{equation}

    \item[(b)] If in addition $\mathcal{F}$ is metrically subregular at the point $(z^*,0)\!\in{\rm gph}\,\mathcal{F}$
               with constant $\kappa>0$, then there exists $\overline{k}\in\mathbb{N}$ such that for all $k\ge \overline{k}$,
               \begin{align*}
                {\rm dist}\big(z^{k+1},\mathcal{F}^{-1}(0)\big)\le\sqrt{1-\frac{\gamma^2\lambda_k(\delta-\lambda_k)}{(\gamma+\kappa)^2}}\,
                {\rm dist}\big(z^{k},\mathcal{F}^{-1}(0)\big).
               \end{align*}
   \end{description}
  \end{theorem}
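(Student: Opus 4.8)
The plan is to treat part~(a) by exhibiting $\mathcal{T}:=\mathcal{J}_{\gamma\mathcal{A}}(\mathcal{I}-\gamma\mathcal{C})$ as an averaged operator and invoking Lemma~\ref{main-lemma2}, and part~(b) by the residual-transfer argument of Theorem~\ref{theorem-GPPA}(b), the new ingredient being a cocoercivity estimate that controls the forward-step term.

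For~(a): since $\mathcal{A}$ is maximally monotone, $\mathcal{J}_{\gamma\mathcal{A}}$ is firmly nonexpansive, hence $\tfrac12$-averaged; since $\mathcal{C}$ is $\beta$-cocoercive, $\beta\mathcal{C}$ is firmly nonexpansive and a direct expansion of $\|(\mathcal{I}-\gamma\mathcal{C})z-(\mathcal{I}-\gamma\mathcal{C})z'\|^2$ shows $\mathcal{I}-\gamma\mathcal{C}$ is $\tfrac{\gamma}{2\beta}$-averaged for $\gamma\in(0,2\beta)$. The composition rule for averaged operators then yields that $\mathcal{T}$ is $\alpha$-averaged with $\alpha=\tfrac{2\beta}{4\beta-\gamma}$; since $\delta=\min(1,\beta/\gamma)+\tfrac12\le 2-\tfrac{\gamma}{2\beta}=\alpha^{-1}$ for every $\gamma\in(0,2\beta)$ --- checking the cases $\gamma\le\beta$ and $\beta<\gamma<2\beta$ separately --- the operator $\mathcal{T}$ is also $\tfrac1\delta$-averaged. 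The equivalence $z=\mathcal{J}_{\gamma\mathcal{A}}(z-\gamma\mathcal{C}z)\Longleftrightarrow-\gamma\mathcal{C}z\in\gamma\mathcal{A}z\Longleftrightarrow0\in\mathcal{F}z$ identifies ${\rm Fix}\,\mathcal{T}=\mathcal{F}^{-1}(0)$, which is nonempty by the standing assumption. Lemma~\ref{main-lemma2}, applied to $\mathcal{T}$ with averagedness constant $1/\delta$ and $\mu_k=\lambda_k$, then gives convergence of $\{z^k\}$ and $\{\mathcal{T}z^k\}$ to some $z^*\in\mathcal{F}^{-1}(0)$ together with \eqref{RFBS-ineq1}.

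For~(b): put $x^k=\mathcal{T}z^k$; nonexpansiveness of $\mathcal{T}$ and $z^*\in{\rm Fix}\,\mathcal{T}$ give $\|x^k-z^*\|\le\|z^k-z^*\|$, so $x^k\to z^*$ by~(a). From $x^k=\mathcal{J}_{\gamma\mathcal{A}}(z^k-\gamma\mathcal{C}z^k)$ one gets $\gamma^{-1}(z^k-x^k)-\mathcal{C}z^k\in\mathcal{A}x^k$, hence $v^k:=\gamma^{-1}(z^k-x^k)+\mathcal{C}x^k-\mathcal{C}z^k\in\mathcal{F}x^k$. Writing $d^k:=\mathcal{C}z^k-\mathcal{C}x^k$ and expanding gives $\|v^k\|^2=\gamma^{-2}\|z^k-x^k\|^2-2\gamma^{-1}\langle z^k-x^k,d^k\rangle+\|d^k\|^2$, and $\beta$-cocoercivity of $\mathcal{C}$ yields $\langle z^k-x^k,d^k\rangle\ge\beta\|d^k\|^2$, so $\|v^k\|^2\le\gamma^{-2}\|z^k-x^k\|^2+(1-2\beta/\gamma)\|d^k\|^2\le\gamma^{-2}\|z^k-x^k\|^2$ because $\gamma<2\beta$. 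Hence ${\rm dist}(0,\mathcal{F}x^k)\le\gamma^{-1}\|z^k-x^k\|$, and metric subregularity of $\mathcal{F}$ at $(z^*,0)$ together with $x^k\to z^*$ gives, for all large $k$, ${\rm dist}(x^k,\mathcal{F}^{-1}(0))\le\kappa\gamma^{-1}\|z^k-x^k\|$, whence ${\rm dist}(z^k,\mathcal{F}^{-1}(0))\le(1+\kappa/\gamma)\|z^k-x^k\|$. Finally, taking $z=\Pi_{\mathcal{F}^{-1}(0)}(z^k)$ in \eqref{RFBS-ineq1} yields ${\rm dist}(z^{k+1},\mathcal{F}^{-1}(0))^2\le{\rm dist}(z^k,\mathcal{F}^{-1}(0))^2-\lambda_k(\delta-\lambda_k)\|z^k-x^k\|^2\le\big(1-\tfrac{\gamma^2\lambda_k(\delta-\lambda_k)}{(\gamma+\kappa)^2}\big){\rm dist}(z^k,\mathcal{F}^{-1}(0))^2$, which is the assertion.

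The crux is the estimate $\|v^k\|\le\gamma^{-1}\|z^k-x^k\|$ in part~(b): unlike the generalized PPA, the FBS step inverts only $\mathcal{A}$, so the natural element $v^k\in\mathcal{F}x^k$ carries the extra forward-step term $\mathcal{C}x^k-\mathcal{C}z^k$; the stepsize restriction $\gamma<2\beta$ --- the same one ensuring $\mathcal{I}-\gamma\mathcal{C}$ is averaged in part~(a) --- is exactly what forces the cross term $-2\gamma^{-1}\langle z^k-x^k,d^k\rangle$ to absorb $\|d^k\|^2$, so that this forward-step contribution does not worsen the contraction constant. The remaining steps are routine bookkeeping: the verification $\delta\le\alpha^{-1}$ over the whole range of $\gamma$ so that Lemma~\ref{main-lemma2} applies on $[0,\delta]$, and the identity ${\rm Fix}\,\mathcal{T}=\mathcal{F}^{-1}(0)$.
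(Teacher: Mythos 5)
Your proposal is correct and follows essentially the same route as the paper: part (a) rests on the $(1/\delta)$-averagedness of $\mathcal{T}$ (which the paper simply cites from the proof of \cite[Theorem 25.8]{BC11}, while you rederive it via the composition rule for averaged operators and then verify $\delta\le\alpha^{-1}$), and part (b) uses exactly the paper's element $\gamma^{-1}(z^k-x^k)+\mathcal{C}x^k-\mathcal{C}z^k\in\mathcal{F}x^k$, the same cocoercivity estimate $\|v^k\|\le\gamma^{-1}\|z^k-x^k\|$ for $\gamma\in(0,2\beta)$, and the same transfer of the residual bound through metric subregularity into \eqref{RFBS-ineq1}.
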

  \begin{proof}
  (a) From the proof of \cite[Theorem 25.8]{BC11}, it follows that $\mathcal{T}$ is $({1}/{\delta})$-averaged.
  Thus, the result of part (a) follows directly from Lemma \ref{main-lemma2}.

  \medskip
  \noindent
  (b) Let $x^{k}=\mathcal{T}z^{k}$. From the definition of $\mathcal{T}$ and the single-valuedness of $\mathcal{C}$,
  we have
  \(
    z^k-\gamma\mathcal{C}z^k \in x^k+\gamma\mathcal{A}x^k.
  \)
  Hence,
  \(
    \gamma^{-1}(z^k-x^k)+\mathcal{C}x^{k}-\mathcal{C}z^{k}\in \mathcal{A}x^k+\mathcal{C}x^{k}=\mathcal{F}x^k.
  \)
  In addition, from part (a) it follows that $x^k\to z^*$. Now by the metric subregularity of $\mathcal{F}$
  at $(z^*,0) \in {\rm gph}\,\mathcal{F}$, there exists $\overline{k}\in\mathbb{N}$ such that
  for $k\ge\overline{k}$,
  \begin{align*}
  {\rm dist}(x^{k},\mathcal{F}^{-1}(0))
  &\le \kappa{\rm dist}(0,\mathcal{F}(x^k))
  \le \kappa\|\gamma^{-1}(z^k\!-\!x^{k})+\mathcal{C}x^{k}-\mathcal{C}z^{k}\|\nonumber\\
  &\le \kappa\sqrt{\gamma^{-2}\|z^k\!-\!x^{k}\|^2+\big(1-\frac{2\beta}{\gamma}\big)\|\mathcal{C}x^{k}-\mathcal{C}z^{k}\|^2}
  \le (\kappa/\gamma)\|x^k-z^{k}\|
  \end{align*}
  where the third inequality is using the cocoercivity of $\mathcal{C}$,
  and the last one is due to $\gamma\in(0,2\beta)$.
  The last inequality immediately implies that for $k\ge\overline{k}$,
  \begin{equation}\label{temp-equa411}
  {\rm dist}(z^{k},\mathcal{F}^{-1}(0))
   \le{\rm dist}(x^{k},\mathcal{F}^{-1}(0))+\|z^k-x^k\|\le\big(1+\kappa/\gamma\big)\big\|x^k-z^{k}\big\|.
  \end{equation}
  Combining inequality \eqref{temp-equa411} with inequality (\ref{RFBS-ineq1}), we obtain that
  for all $k\ge \overline{k}$,
  \begin{align*}
  {\rm dist}(z^{k+1},\mathcal{F}^{-1}(0))^2
  &\leq \big\|z^{k+1}-\Pi_{\mathcal{F}^{-1}(0)}(z^{k})\big\|^2\nonumber\\
  &\leq \big\|z^{k}-\Pi_{\mathcal{F}^{-1}(0)}(z^{k})\big\|^2-\lambda_k(\delta-\lambda_k)\|z^{k}-x^k\|^2\nonumber\\
  &\le \left[1-\frac{\gamma^2\lambda_k(\delta-\lambda_k)}{(\gamma+\kappa)^2}\right]
     {\rm dist}(z^{k},\mathcal{F}^{-1}(0))^2.
  \end{align*}
  This implies the desired result of part (b). The proof is then completed.
  \end{proof}

  By Theorem \ref{convergence-RFBS}, one may see that the linear convergence rate  coefficient is smallest
  when $\lambda_k=\frac{1}{2}\delta$. Recall that Chen and Rockafellar \cite{CR06} derived the linear convergence of
  the FBS algorithm under the strong monotonicity of $\mathcal{F}:=\mathcal{A}+\mathcal{C}$, which implies the single-valuedness
  and Lipschitz continuity of $\mathcal{F}^{-1}$, and then the metric subregularity of
  $\mathcal{F}$ at $(z^*,0)\in{\rm gph}\,\mathcal{F}$. Notice that
  the linear convergence is also derived in the work of Liang et al. \cite{LFP15} under the metric
  suregular of $\big(\mathcal{I}\!-\!\mathcal{T}\big)$
  at the point $(z^*,0)\in {\rm gph}\big(\mathcal{I}\!-\!\mathcal{T}\big)$.
  In the following lemma \ref{relation-AB} in subsection \ref{relation}, we show that the
  metric subregularity of $\big(\mathcal{I}\!-\!\mathcal{T}\big)$ is equivalent
  to the one of $\mathcal{F}$ at $(z^*,0)\in {\rm gph}(\mathcal{F})$.

 \subsection{Generalized Douglas-Rachford splitting algorithm}\label{sec3.3}

 Given $\gamma\!>\!0$, the generalized DRS method for \eqref{composite-inclusion} with $\mathcal{C}$ vanishing
 takes the iterations:
 \begin{numcases}{}\label{GDRS}
   z^{k} = \mathcal{J}_{\gamma\mathcal{B}}x^{k},\nonumber\\
   y^{k} = \mathcal{J}_{\gamma\mathcal{A}}(2z^{k}-x^{k}),\\
   x^{k+1} = x^{k}+\lambda_k(y^{k}-z^{k}),\nonumber
  \end{numcases}
 which can be rewritten as the form of \eqref{KM} with
 $\mathcal{T}:=\frac{1}{2}\big((2\mathcal{J}_{\gamma\mathcal{A}}-\mathcal{I})(2\mathcal{J}_{\gamma\mathcal{B}}-\mathcal{I})+\mathcal{I}\big)$, i.e.,
 \begin{equation}\label{GDRS1}
  x^{k+1} = x^{k}+\lambda_k(\mathcal{T}x^{k}-x^{k}).
 \end{equation}
 When $\lambda_k\equiv 1$, equation \eqref{GDRS} gives the DRS method \cite{LM79,DR56},
 and when $\lambda_k\equiv 2$ it gives the PRS method \cite{LM79,PR54}.
 Before stating the linear convergence rate of the generalized DRS method,
 we establish the relationship between the set $\mathcal{F}^{-1}(0)$ and
 the set ${\rm Fix}\,\mathcal{T}$.
 \begin{lemma}\label{zero-point1}
  The set $\mathcal{F}^{-1}(0)$ has the following relations with the fixed-point set ${\rm Fix}\,\mathcal{T}$:
  \begin{description}
    \item [(a)] $\mathcal{F}^{-1}(0)\!=\!\mathcal{J}_{\gamma\mathcal{B}}({\rm Fix}\,\mathcal{T})$. If $\mathcal{B}$ is single-valued,
                ${\rm Fix}\mathcal{T}\!=\!(\mathcal{I}+\gamma\mathcal{B})(\mathcal{F}^{-1}(0))$.
    \item [(b)] ${\rm Fix}\mathcal{T}\subseteq (\mathcal{I}\!-\!\gamma\!\mathcal{A})(\mathcal{F}^{-1}(0))$.
                 If $\mathcal{A}$ is single-valued, then ${\rm Fix}\mathcal{T}=(\mathcal{I}\!-\!\gamma\mathcal{A})(\mathcal{F}^{-1}(0))$.
  \end{description}
 \end{lemma}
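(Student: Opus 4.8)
The plan is to reduce everything to a single bookkeeping identity for $\mathcal{T}=\frac12\big((2\mathcal{J}_{\gamma\mathcal{A}}-\mathcal{I})(2\mathcal{J}_{\gamma\mathcal{B}}-\mathcal{I})+\mathcal{I}\big)$ (with $\mathcal{F}=\mathcal{A}+\mathcal{B}$, since $\mathcal{C}$ vanishes here). First I would note that $\mathcal{J}_{\gamma\mathcal{A}}$ and $\mathcal{J}_{\gamma\mathcal{B}}$ are single-valued by maximal monotonicity, so for any $x$ the points $z:=\mathcal{J}_{\gamma\mathcal{B}}x$ and $y:=\mathcal{J}_{\gamma\mathcal{A}}(2z-x)$ appearing in \eqref{GDRS} are well defined; expanding the two reflections gives $(2\mathcal{J}_{\gamma\mathcal{A}}-\mathcal{I})(2\mathcal{J}_{\gamma\mathcal{B}}-\mathcal{I})x=2y-2z+x$, hence $\mathcal{T}x=x+(y-z)$, so that $x\in{\rm Fix}\,\mathcal{T}$ if and only if $y=z$. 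This equivalence is the workhorse for all four claims.

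Next I would translate resolvent identities into monotone inclusions: $z=\mathcal{J}_{\gamma\mathcal{B}}x$ is equivalent to $\gamma^{-1}(x-z)\in\mathcal{B}z$, and, when $y=z$, $z=\mathcal{J}_{\gamma\mathcal{A}}(2z-x)$ is equivalent to $\gamma^{-1}(z-x)\in\mathcal{A}z$. Adding these two inclusions yields $0\in\mathcal{A}z+\mathcal{B}z=\mathcal{F}z$, which proves $\mathcal{J}_{\gamma\mathcal{B}}({\rm Fix}\,\mathcal{T})\subseteq\mathcal{F}^{-1}(0)$; reading instead $z-x\in\gamma\mathcal{A}z$, i.e. $x\in(\mathcal{I}-\gamma\mathcal{A})(z)$ with $z\in\mathcal{F}^{-1}(0)$, gives the inclusion ${\rm Fix}\,\mathcal{T}\subseteq(\mathcal{I}-\gamma\mathcal{A})(\mathcal{F}^{-1}(0))$ of part (b). For the reverse inclusion in (a), given $z\in\mathcal{F}^{-1}(0)$ I would pick $w\in\mathcal{B}z$ with $-w\in\mathcal{A}z$ and set $x:=z+\gamma w$; then $x-z=\gamma w\in\gamma\mathcal{B}z$ gives $z=\mathcal{J}_{\gamma\mathcal{B}}x$, while $(2z-x)-z=-\gamma w\in\gamma\mathcal{A}z$ gives $\mathcal{J}_{\gamma\mathcal{A}}(2z-x)=z$, so $y=z$, $x\in{\rm Fix}\,\mathcal{T}$, and $z=\mathcal{J}_{\gamma\mathcal{B}}x\in\mathcal{J}_{\gamma\mathcal{B}}({\rm Fix}\,\mathcal{T})$.

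Finally I would upgrade to the two equality statements. If $\mathcal{B}$ is single-valued, then $z=\mathcal{J}_{\gamma\mathcal{B}}x$ forces $x=z+\gamma\mathcal{B}z=(\mathcal{I}+\gamma\mathcal{B})z$, so combining with part (a) and running the construction above with $w=\mathcal{B}z$ gives ${\rm Fix}\,\mathcal{T}=(\mathcal{I}+\gamma\mathcal{B})(\mathcal{F}^{-1}(0))$; symmetrically, if $\mathcal{A}$ is single-valued, then for $z\in\mathcal{F}^{-1}(0)$ one has $-\mathcal{A}z\in\mathcal{B}z$, and setting $x:=z-\gamma\mathcal{A}z=(\mathcal{I}-\gamma\mathcal{A})z$ one checks $z=\mathcal{J}_{\gamma\mathcal{B}}x$ and $\mathcal{J}_{\gamma\mathcal{A}}(2z-x)=z$, so $x\in{\rm Fix}\,\mathcal{T}$ and, with part (b), ${\rm Fix}\,\mathcal{T}=(\mathcal{I}-\gamma\mathcal{A})(\mathcal{F}^{-1}(0))$. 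There is no real analytic difficulty; the only thing that needs care is keeping straight the direction of each inclusion and using the single-valuedness hypotheses exactly where the identities $x=(\mathcal{I}\pm\gamma\mathcal{A})z$ or $x=(\mathcal{I}+\gamma\mathcal{B})z$ (rather than mere membership) are invoked, so that the image of $\mathcal{F}^{-1}(0)$ under $\mathcal{I}\pm\gamma\mathcal{A}$ or $\mathcal{I}+\gamma\mathcal{B}$ is the intended set.
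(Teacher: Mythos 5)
Your proof is correct, and it takes a more self-contained route than the paper. The paper disposes of part (a) by citing \cite[Proposition 25.1(ii)]{BC11} and proves part (b) by invoking the Eckstein--Bertsekas operator $\mathcal{S}_{\gamma,\mathcal{A},\mathcal{B}}$ together with the characterization ${\rm Fix}\,\mathcal{T}=\mathcal{S}_{\gamma,\mathcal{A},\mathcal{B}}^{-1}(0)=\bigcup_{u}\big[u+\gamma(-\mathcal{A}u\cap\mathcal{B}u)\big]$ from \cite[Theorem 5]{EB92}, then unwinding that union exactly as you do. You instead derive everything from first principles: the identity $\mathcal{T}x=x+(y-z)$ reduces ${\rm Fix}\,\mathcal{T}$ to the condition $y=z$, and the two resolvent inclusions $\gamma^{-1}(x-z)\in\mathcal{B}z$ and $\gamma^{-1}(z-x)\in\mathcal{A}z$ then give all four statements; in effect you reprove the quoted union formula rather than cite it. What your version buys is transparency and independence from external references (and it makes visible exactly where single-valuedness of $\mathcal{A}$ or $\mathcal{B}$ is used to turn a membership into an equality); what the paper's version buys is brevity and a direct link to the $\mathcal{S}_{\gamma,\mathcal{A},\mathcal{B}}$ machinery that it reuses later (e.g.\ in Theorem \ref{GDRS1-convergence} and Proposition \ref{prop4.1}). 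One could even argue your write-up is cleaner on a small point: the paper's deduction that $x\in z+\gamma\mathcal{B}z$ with $z\in\mathcal{F}^{-1}(0)$ implies $x\in{\rm Fix}\,\mathcal{T}$ leans on part (a) in a direction that part (a) as stated only literally provides when $\mathcal{B}$ is single-valued, whereas your explicit construction of $x=z+\gamma w$ with $w\in\mathcal{B}z$, $-w\in\mathcal{A}z$ verifies the fixed-point property directly.
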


  The proof of the above lemma is given in the Appendix.
  Next, we show the DRS method converges linearly
  under the metric subregularity of $\mathcal{F}$ at
  $(z^*,0)\!\in\!{\rm gph}\mathcal{F}$.
 \begin{theorem}\label{GDRS-convergence}
  Let $\{x^k\},\{y^k\}$ and $\{z^k\}$ be generated by the generalized DRS method with
  $\{\lambda_k\}\subseteq[0,2]$ such that $\sum_{k=0}^{\infty}\lambda_k(2-\lambda_k)=+\infty$.
  Then, the following statements hold.
  \begin{description}
   \item[(a)] $\{y^k\}$ and $\{z^k\}$ converge to $z^*\in\mathcal{F}^{-1}(0)$, and
               $\{x^k\}$ converges to $x^*\in{\rm Fix}\,\mathcal{T}$ and
               \begin{equation}\label{GDRS-ineq1}
                \|x^{k+1}-x\|^2 \le \|x^{k}-x\|^2-\lambda_k(2-\!\lambda_k)\|\mathcal{T}x^{k}-x^k\|^2
                \ \quad\forall x\in {\rm Fix}\,\mathcal{T}\ {\rm and}\ \forall k\in\mathbb{N}.
               \end{equation}

   \item[(b)] If $\mathcal{A}$ is single-valued and Lipschitz continuous with modulus $\frac{1}{\beta}$, and
               $\mathcal{F}$ is metrically subregular at $(z^*,0)\in{\rm gph}\,\mathcal{F}$ with constant $\kappa>0$,
               then there exists $\overline{k}\in\mathbb{N}$ such that
               \begin{equation*}
                 {\rm dist}(x^{k+1},{\rm Fix}\,\mathcal{T})
                 \!\le\!\sqrt{1\!-\!\frac{\lambda_k(2\!-\!\lambda_k)}
                        {\big[2\!+\!\sqrt{1\!+\!\gamma^2\beta^{-2}}\big(1\!+\!\kappa(\gamma^{-1}\!+\!\beta^{-1})\big)\big]^2}}
                 \,{\rm dist}(x^{k},{\rm Fix}\,\mathcal{T}),\,\forall k\!\ge\!\overline{k}.
                \end{equation*}

  \item[(c)] If $\mathcal{B}$ is single-valued and Lipschitz continuous with modulus $\frac{1}{\beta}$, and
               $\mathcal{F}$ is metric subregular at $(z^*,0)\in{\rm gph}\,\mathcal{F}$ with constant $\kappa>0$,
               then there exists $\overline{k}\in\mathbb{N}$ such that
               \begin{equation*}
                {\rm dist}(x^{k+1},{\rm Fix}\,\mathcal{T})\le\!\sqrt{1-\!\frac{\beta^2\lambda_k(2-\lambda_k)}{(\gamma\!+\!\beta)^2\big(1\!+\!\kappa\sqrt{\gamma^{-2}\!+\!\beta^{-2}}\big)^2}}
                \,{\rm dist}(x^{k},{\rm Fix}\,\mathcal{T}),\ \forall k\ge\overline{k}.
               \end{equation*}
  \end{description}
 \end{theorem}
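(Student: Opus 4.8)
The plan is to dispose of part~(a) quickly from the averagedness of $\mathcal{T}$ and Lemma~\ref{main-lemma2}, and then reduce parts~(b) and (c) to a single ``error bound'' ${\rm dist}(x^k,{\rm Fix}\,\mathcal{T})\le C\|\mathcal{T}x^k-x^k\|$ valid for all large $k$, which fed into~\eqref{GDRS-ineq1} gives the stated linear rate. For~(a): with $R_{\gamma\mathcal{A}}:=2\mathcal{J}_{\gamma\mathcal{A}}-\mathcal{I}$ and $R_{\gamma\mathcal{B}}:=2\mathcal{J}_{\gamma\mathcal{B}}-\mathcal{I}$ (nonexpansive since $\mathcal{A},\mathcal{B}$ are maximally monotone) one has $\mathcal{T}=\tfrac12(\mathcal{I}+R_{\gamma\mathcal{A}}R_{\gamma\mathcal{B}})$, so $\mathcal{T}$ is $\tfrac12$-averaged in the sense of Definition~\ref{averaged}, and ${\rm Fix}\,\mathcal{T}\ne\emptyset$ because $\mathcal{F}^{-1}(0)=\mathcal{J}_{\gamma\mathcal{B}}({\rm Fix}\,\mathcal{T})\ne\emptyset$ by Lemma~\ref{zero-point1}(a). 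Lemma~\ref{main-lemma2} with $\alpha=\tfrac12$, $\mu_k=\lambda_k$ then yields $x^k\to x^*\in{\rm Fix}\,\mathcal{T}$, $\mathcal{T}x^k\to x^*$ and~\eqref{GDRS-ineq1}; continuity of $\mathcal{J}_{\gamma\mathcal{B}}$ gives $z^k\to z^*:=\mathcal{J}_{\gamma\mathcal{B}}x^*\in\mathcal{F}^{-1}(0)$ (Lemma~\ref{zero-point1}(a) again), and since $y^k-z^k=\mathcal{T}x^k-x^k\to0$ by~\eqref{GDRS} we get $y^k\to z^*$ as well.

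For part~(b), put $w^k:=\mathcal{T}x^k-x^k=y^k-z^k$. From $z^k=\mathcal{J}_{\gamma\mathcal{B}}x^k$ we have $\gamma^{-1}(x^k-z^k)\in\mathcal{B}z^k$, and from $y^k=\mathcal{J}_{\gamma\mathcal{A}}(2z^k-x^k)$ with $\mathcal{A}$ single-valued we have $\gamma\mathcal{A}y^k=2z^k-x^k-y^k=(z^k-x^k)-w^k$; hence $v^k:=\mathcal{A}z^k+\gamma^{-1}(x^k-z^k)\in\mathcal{F}z^k$ satisfies $\gamma v^k=-w^k+\gamma(\mathcal{A}z^k-\mathcal{A}y^k)$, so $\|v^k\|\le(\gamma^{-1}+\beta^{-1})\|w^k\|$ using $\|\mathcal{A}z^k-\mathcal{A}y^k\|\le\beta^{-1}\|w^k\|$. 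As $z^k\to z^*$, for $k$ large the metric subregularity of $\mathcal{F}$ at $(z^*,0)$ (Definition~\ref{metric-subregular}) gives ${\rm dist}(z^k,\mathcal{F}^{-1}(0))\le\kappa\|v^k\|\le\kappa(\gamma^{-1}+\beta^{-1})\|w^k\|$; set $\bar z^k:=\Pi_{\mathcal{F}^{-1}(0)}(z^k)$. By Lemma~\ref{zero-point1}(b), $p^k:=\bar z^k-\gamma\mathcal{A}\bar z^k\in{\rm Fix}\,\mathcal{T}$, and substituting $x^k-z^k=-\gamma\mathcal{A}y^k-w^k$ and $z^k-\bar z^k=-w^k+(y^k-\bar z^k)$ gives
\[
x^k-p^k=\big[(y^k-\bar z^k)-\gamma(\mathcal{A}y^k-\mathcal{A}\bar z^k)\big]-2w^k .
\]
Monotonicity of $\mathcal{A}$ yields $\|(y^k-\bar z^k)-\gamma(\mathcal{A}y^k-\mathcal{A}\bar z^k)\|\le\sqrt{1+\gamma^2\beta^{-2}}\,\|y^k-\bar z^k\|$, and $\|y^k-\bar z^k\|\le\|w^k\|+\|z^k-\bar z^k\|\le(1+\kappa(\gamma^{-1}+\beta^{-1}))\|w^k\|$, so ${\rm dist}(x^k,{\rm Fix}\,\mathcal{T})\le\|x^k-p^k\|\le C\|w^k\|$ with $C=2+\sqrt{1+\gamma^2\beta^{-2}}\,(1+\kappa(\gamma^{-1}+\beta^{-1}))$. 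Taking $x=\Pi_{{\rm Fix}\,\mathcal{T}}(x^k)$ in~\eqref{GDRS-ineq1} and inserting $\|w^k\|^2\ge C^{-2}{\rm dist}(x^k,{\rm Fix}\,\mathcal{T})^2$ gives ${\rm dist}(x^{k+1},{\rm Fix}\,\mathcal{T})^2\le[1-\lambda_k(2-\lambda_k)C^{-2}]{\rm dist}(x^k,{\rm Fix}\,\mathcal{T})^2$, as claimed.

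Part~(c) is the mirror image with $\mathcal{B}$ single-valued and the ``single-valued $\mathcal{B}$'' halves of Lemma~\ref{zero-point1}. Now $\mathcal{B}z^k=\gamma^{-1}(x^k-z^k)$, $\gamma^{-1}(2z^k-x^k-y^k)\in\mathcal{A}y^k$, and $\|\mathcal{B}y^k-\mathcal{B}z^k\|\le\beta^{-1}\|w^k\|$, so $v^k:=\gamma^{-1}(2z^k-x^k-y^k)+\mathcal{B}y^k\in\mathcal{F}y^k$ equals $(\mathcal{B}y^k-\mathcal{B}z^k)-\gamma^{-1}w^k$, and here the cross term $\langle\mathcal{B}y^k-\mathcal{B}z^k,y^k-z^k\rangle\ge0$ has the favourable sign, giving the sharper $\|v^k\|\le\sqrt{\gamma^{-2}+\beta^{-2}}\,\|w^k\|$. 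Since $y^k\to z^*$, metric subregularity gives ${\rm dist}(y^k,\mathcal{F}^{-1}(0))\le\kappa\sqrt{\gamma^{-2}+\beta^{-2}}\,\|w^k\|$; with $\bar y^k:=\Pi_{\mathcal{F}^{-1}(0)}(y^k)$, Lemma~\ref{zero-point1}(a) gives $p^k:=\bar y^k+\gamma\mathcal{B}\bar y^k\in{\rm Fix}\,\mathcal{T}$, and from $x^k=z^k+\gamma\mathcal{B}z^k$ we get $x^k-p^k=(\mathcal{I}+\gamma\mathcal{B})z^k-(\mathcal{I}+\gamma\mathcal{B})\bar y^k$, hence $\|x^k-p^k\|\le(1+\gamma\beta^{-1})\|z^k-\bar y^k\|\le(1+\gamma\beta^{-1})(1+\kappa\sqrt{\gamma^{-2}+\beta^{-2}})\|w^k\|$; since $1+\gamma\beta^{-1}=(\gamma+\beta)/\beta$, feeding this into~\eqref{GDRS-ineq1} as in~(b) produces the stated rate. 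The main obstacle is exactly this middle stage of (b) and (c): manufacturing one residual element of $\mathcal{F}$ at the iterate with norm $O(\|w^k\|)$ by transferring subdifferential information between $y^k$ and $z^k$ through the Lipschitz continuity of the single-valued operator, and then converting the metric-subregularity estimate at $z^k$ (resp.\ $y^k$) into a distance estimate at $x^k$ via Lemma~\ref{zero-point1}, all while keeping the monotonicity-based tightening that yields precisely the advertised constants.
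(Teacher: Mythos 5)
Your proof is correct and follows essentially the same route as the paper's: extract a residual of $\mathcal{F}$ at $z^k$ (resp.\ $y^k$) of size $O(\|y^k-z^k\|)$ using the Lipschitz continuity of the single-valued operator, apply metric subregularity, transfer the distance bound to $x^k$ through Lemma \ref{zero-point1}, and insert it into \eqref{GDRS-ineq1}; the constants you obtain match the theorem exactly. The only (immaterial) differences are that you invoke the $\tfrac12$-averagedness with Lemma \ref{main-lemma2} explicitly in part (a) and project $z^k$ rather than $y^k$ onto $\mathcal{F}^{-1}(0)$ in part (b) (and vice versa in part (c)), which yields the same bounds via the triangle inequality.
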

 \begin{proof}
  (a) It is easy to check that $\mathcal{T}$ is nonexpansive.
  The result follows directly from Lemma \ref{main-lemma1} and
  the first equality of Lemma \ref{zero-point1}(a).

  \medskip
  \noindent
  (b) By the iteration steps \eqref{GDRS}, it follows that
  $\gamma^{-1}(x^k\!-\!z^k)\in\mathcal{B}z^{k}$, $\gamma^{-1}(z^{k}\!-\!x^{k}\!+\!z^{k}\!-\!y^{k})\!=\!\mathcal{A}y^{k}$
  and $\gamma^{-1}(z^{k}\!-\!y^{k})\in \mathcal{A}y^{k}\!+\!\mathcal{B}z^{k}$.
  This, along with the single-valuedness of $\mathcal{A}$, means that
  \[
  \gamma^{-1}(z^{k}-y^{k})+ \mathcal{A}z^{k}-\mathcal{A}y^{k}\in \mathcal{A}z^{k}+\mathcal{B}z^{k}=\mathcal{F}z^k.
  \]
  By part (a), using the metric subregularity of $\mathcal{F}$ at the point $(z^*,0)$ and
  the Lipschitz continuity of $\mathcal{A}$, it follows that there exists $\overline{k}\in\mathbb{N}$ such that for all $k\ge\overline{k}$,
  \begin{align*}
  {\rm dist}(z^{k},\mathcal{F}^{-1}(0))\le \kappa{\rm dist}(0,\mathcal{F}z^k)
  &\leq \kappa \big\|\gamma^{-1}(z^{k}-y^{k})+\mathcal{A}z^{k}-\mathcal{A}y^{k}\big\|\nonumber\\
  &\leq \kappa(\gamma^{-1}\!+\!\beta^{-1})\|z^{k}-y^{k}\|,
  \end{align*}
  which further implies that
 \begin{equation}\label{GDRS-ineq422}
  {\rm dist}(y^{k},\mathcal{F}^{-1}(0))
  \le {\rm dist}(z^{k},\mathcal{F}^{-1}(0)) +\|z^{k}-y^{k}\|
  \le \big[1+\kappa(\gamma^{-1}\!+\!\beta^{-1})\big]\|y^{k}-z^{k}\|.
 \end{equation}
  Let $\overline{z}^k=\Pi_{\mathcal{F}^{-1}(0)}(y^k)-\gamma\mathcal{A}(\Pi_{\mathcal{F}^{-1}(0)}(y^k))$.
  From Lemma \ref{zero-point1}(b), $\overline{z}^k\in{\rm Fix}\mathcal{T}$.
  In addition, notice that $2z^k-x^k=y^k+\gamma\mathcal{A}y^k$ by the second equality in \eqref{GDRS}.
  Thus, for all $k\ge \overline{k}$,
  \begin{align*}
   {\rm dist}(x^{k},{\rm Fix}\,\mathcal{T})
   &\le\|x^k-\Pi_{\mathcal{F}^{-1}(0)}(y^k)+\gamma\mathcal{A}(\Pi_{\mathcal{F}^{-1}(0)}(y^k))\|\\
   &= \|2z^k-y^k-\gamma\mathcal{A}y^k-\Pi_{\mathcal{F}^{-1}(0)}(y^k)+\gamma\mathcal{A}(\Pi_{\mathcal{F}^{-1}(0)}(y^k))\|\\
   &\le \|2z^k-2y^k\|+\|y^k-\Pi_{\mathcal{F}^{-1}(0)}(y^k)-\gamma\mathcal{A}y^k+\gamma\mathcal{A}(\Pi_{\mathcal{F}^{-1}(0)}(y^k))\| \\
   &\le 2\|z^k-y^k\|+\sqrt{1+\gamma^2\beta^{-2}}{\rm dist}(y^k,\mathcal{F}^{-1}(0))\\
   &\le \big[2+\!\sqrt{1+\gamma^2\beta^{-2}}(1+\kappa(\gamma^{-1}\!+\!\beta^{-1}))\big]\|z^k-y^k\|,
  \end{align*}
  where the third inequality is using the Lipschitz continuity and monotonicity of $\mathcal{A}$,
  and the last one is due to \eqref{GDRS-ineq422}. In addition, from equation \eqref{GDRS1} and
  the last equality of \eqref{GDRS}, we have
  \(
    \mathcal{T}x^{k}-x^k=\frac{(x^{k+1}-x^k)}{\lambda_k}=(y^k-z^k),
  \)
  which together with \eqref{GDRS-ineq1} implies that
  \begin{align}\label{GDRS-ineq423}
  {\rm dist}(x^{k+1},{\rm Fix}\,\mathcal{T})^2
  &\leq \big\|x^{k+1}-\Pi_{{\rm Fix}\,\mathcal{T}}(x^{k})\big\|^2\nonumber\\
  &\leq \big\|x^{k}-\Pi_{{\rm Fix}\,\mathcal{T}}(x^{k})\big\|^2-\lambda_k(2-\lambda_k)\|\mathcal{T}x^{k}-x^k\|^2\nonumber\\
  &= {\rm dist}(x^{k},{\rm Fix}\,\mathcal{T})^2-\lambda_k(2-\lambda_k)\|\mathcal{T}x^{k}-x^k\|^2\nonumber\\
  &= {\rm dist}(x^{k},{\rm Fix}\,\mathcal{T})^2-\lambda_k(2-\lambda_k)\|y^k-z^k\|^2.
  \end{align}
  The desired result of part (b) follows directly from the last two inequalities.

  \medskip
  \noindent
  (c) Notice that $\mathcal{B}$ is single-valued and Lipschitzian. Thanks to part(a) and (b), we have
 \[
  \gamma^{-1}(z^{k}-y^{k})+ \mathcal{B}y^k-\mathcal{B}z^k\in \mathcal{A}y^{k}+\mathcal{B}y^{k}=\mathcal{F}y^k
 \]
 From the metric subregularity of $\mathcal{F}$ at the point $(z^*,0)$,
 there exists $k\ge\overline{k}$ such that
  \begin{align*}
  {\rm dist}(y^{k},\mathcal{F}^{-1}(0))&\le \kappa{\rm dist}(0,\mathcal{F}y^k)
  \le \kappa \big\|\gamma^{-1}(z^{k}-y^{k})+ \mathcal{B}y^k-\mathcal{B}z^k\big\|\nonumber\\
  &\le \kappa\sqrt{\gamma^{-2}\!+\!\beta^{-2}}\|z^{k}-y^{k}\|\quad{\rm for}\ k\ge\overline{k},
  \end{align*}
  where the last inequality is using the monotonicity of $\mathcal{B}$.
  Consequently, for all $k\ge\overline{k}$,
 \[
  {\rm dist}(z^{k},\mathcal{F}^{-1}(0))
  \le {\rm dist}(y^{k},\mathcal{F}^{-1}(0)) +\|z^{k}-y^{k}\|
  \le \big[1+\kappa\sqrt{\gamma^{-2}\!+\!\beta^{-2}}\big]\|y^{k}-z^{k}\|.
 \]
  Let $\overline{z}^k=\mathcal{B}(\Pi_{\mathcal{F}^{-1}(0)}(z^k))$. By Lemma \ref{zero-point1}(a),
  clearly, $\Pi_{\mathcal{F}^{-1}(0)}(z^k)+\gamma\overline{z}^k\in {\rm Fix}\,\mathcal{T}$.
  Moreover, using the Lipschitzian property of $\mathcal{B}$, we have
  \(
    \|\overline{z}^k-\mathcal{B}z^k\|\le \beta^{-1}\|\Pi_{\mathcal{F}^{-1}(0)}(z^k)-z^k\|.
  \)
  In addition, according to  $x^k=z^k+\gamma\mathcal{B}z^k\in \mathcal{J}_{\gamma\mathcal{B}}^{-1}(z^k)$, we further obtain
  \begin{align*}
    {\rm dist}(x^{k},{\rm Fix}\,\mathcal{T})
   &\le\|x^k-\Pi_{\mathcal{F}^{-1}(0)}(z^k)-\gamma\overline{z}^k\|\\
   &\le \|z^k-\Pi_{\mathcal{F}^{-1}(0)}(z^k)\|+\gamma\|\widetilde{x}^k-\overline{z}^k\|\\
   &\le (1+\gamma\beta^{-1})\|z^k-\Pi_{\mathcal{F}^{-1}(0)}(z^k)\|\\
   &\le(1+\gamma\beta^{-1})\big[1+\kappa\sqrt{\gamma^{-2}\!+\!\beta^{-2}}\big]\|y^{k}-z^{k}\|
   \quad{\rm for}\ k\ge\overline{k}.
  \end{align*}
   Combining this inequality with \eqref{GDRS-ineq423} yields the desired result.
   The proof is completed.
  \end{proof}

%
  \begin{remark}
  Giselsson \cite{Giselsson15} and Davis et al. \cite{Davis152} recently derived the linear convergence rate
  of the generalized DRS method under the assumption that $\mathcal{A}$ is strongly monotone
  and $\mathcal{B}$ is $\beta$-Lipschitz continuous, which is stronger than the assumption of
  Theorem \ref{GDRS-convergence}(c).
  \end{remark}

  It is well known that the generalized DRS method is a generalized PPA associated with
  operator $\mathcal{S}_{\gamma,\mathcal{A},\mathcal{B}}$  in the sense that
  $\mathcal{T}\!=\!\big(\mathcal{I}\!+\!\mathcal{S}_{\gamma,\mathcal{A},\mathcal{B}}\big)^{-1}
  \!=\!\mathcal{J}_{ \mathcal{S}_{\gamma,\mathcal{A},\mathcal{B}}}$ by \cite[Theorem 5]{EB92}.
  That is, the sequence $\{x^k\}$ in \eqref{GDRS} can be generated by the following iteration step
  \begin{equation}\label{GDRS2}
    x^{k+1}=x^k+\lambda_k(\mathcal{J}_{ \mathcal{S}_{\gamma,\mathcal{A},\mathcal{B}}}x^k-x^k).
  \end{equation}
  By Theorem \ref{theorem-GPPA}, we also have the linear convergence rate of the generalized DRS method
  under the metric subregularity of $\mathcal{S}_{\gamma,\mathcal{A},\mathcal{B}}$
  at $(\overline{x},0)\in{\rm gph}\mathcal{S}_{\gamma,\mathcal{A},\mathcal{B}}$, stated as follows.
 \begin{theorem}\label{GDRS1-convergence}
   Let $\{x^{k}\}$ be the sequence generated by equation \eqref{GDRS2} with $\{\lambda_k\}\subseteq[0,2]$
   and $\sum_{k=0}^{\infty}\lambda_k(2-\!\lambda_k)=+\infty$. Then, the following statements hold.
   \begin{description}
    \item[(a)] $\{z^k\}$ and $\{x^k\}$ converge to $z^*\in\mathcal{F}^{-1}(0)$ and
               $x^*\in\mathcal{S}_{\gamma,\mathcal{A},\mathcal{B}}^{-1}(0)$, respectively, and
               \begin{equation*}
                \|x^{k+1}-x\|^2 \le \|x^{k}-x\|^2-\lambda_k(2-\lambda_k)\|\mathcal{J}_{\mathcal{S}_{\gamma,\mathcal{A},\mathcal{B}}}x^{k}\!-x^k\|^2
                \quad \forall x\in\mathcal{S}_{\gamma,\mathcal{A},\mathcal{B}}^{-1}(0)\ {\rm and}\ \forall k\in\mathbb{N}.
               \end{equation*}

    \item[(b)] If $\mathcal{S}_{\gamma,\mathcal{A},\mathcal{B}}$ is metrically subregular at $(x^*,0)$
               with constant $\kappa>0$, then there exists $\overline{k}\in\mathbb{N}$ such that for all $k\ge\overline{k}$,
               \begin{equation}\label{DRS-PPA}
                {\rm dist}\big(x^{k+1},\mathcal{S}_{\gamma,\mathcal{A},\mathcal{B}}^{-1}(0)\big)
                \le\sqrt{1-\frac{\lambda_k(2-\lambda_k)}{(1+\kappa)^2}}\,{\rm dist}\big(x^{k},\mathcal{S}_{\gamma,\mathcal{A},\mathcal{B}}^{-1}(0)\big).
               \end{equation}
   \end{description}
  \end{theorem}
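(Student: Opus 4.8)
The plan is to recognize Theorem~\ref{GDRS1-convergence} as a direct specialization of Theorem~\ref{theorem-GPPA} applied to the splitting operator $\mathcal{S}_{\gamma,\mathcal{A},\mathcal{B}}$, combined with Lemma~\ref{zero-point1}. First I would record the two structural facts that make this reduction work. By \cite[Theorem 5]{EB92}, the Douglas--Rachford operator $\mathcal{T}=\frac12\big((2\mathcal{J}_{\gamma\mathcal{A}}-\mathcal{I})(2\mathcal{J}_{\gamma\mathcal{B}}-\mathcal{I})+\mathcal{I}\big)$ is the resolvent of index $1$ of a maximally monotone operator $\mathcal{S}_{\gamma,\mathcal{A},\mathcal{B}}$, that is, $\mathcal{T}=\mathcal{J}_{\mathcal{S}_{\gamma,\mathcal{A},\mathcal{B}}}=(\mathcal{I}+\mathcal{S}_{\gamma,\mathcal{A},\mathcal{B}})^{-1}$; consequently $x\in{\rm Fix}\,\mathcal{T}$ if and only if $0\in\mathcal{S}_{\gamma,\mathcal{A},\mathcal{B}}x$, so ${\rm Fix}\,\mathcal{T}=\mathcal{S}_{\gamma,\mathcal{A},\mathcal{B}}^{-1}(0)$. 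Thus the iteration \eqref{GDRS2} is exactly the generalized PPA \eqref{GPPA} run on the maximally monotone operator $\mathcal{S}_{\gamma,\mathcal{A},\mathcal{B}}$ with resolvent index $\gamma=1$.

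For part~(a) I would simply invoke Theorem~\ref{theorem-GPPA}(a) with $\mathcal{F}$ there replaced by $\mathcal{S}_{\gamma,\mathcal{A},\mathcal{B}}$: the stepsize condition $\{\lambda_k\}\subseteq[0,2]$ with $\sum_k\lambda_k(2-\lambda_k)=+\infty$ is precisely what is assumed, so $\{x^k\}$ converges to some $x^*\in\mathcal{S}_{\gamma,\mathcal{A},\mathcal{B}}^{-1}(0)={\rm Fix}\,\mathcal{T}$, and the Fej\'er-type inequality in~(a) is the instance of \eqref{GPPA-ineq} with $\mathcal{J}_{\gamma\mathcal{F}}$ replaced by $\mathcal{J}_{\mathcal{S}_{\gamma,\mathcal{A},\mathcal{B}}}$. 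To obtain convergence of $\{z^k\}$ I would use $z^k=\mathcal{J}_{\gamma\mathcal{B}}x^k$ from \eqref{GDRS} together with the (Lipschitz) continuity of the resolvent $\mathcal{J}_{\gamma\mathcal{B}}$, so that $z^k\to z^*:=\mathcal{J}_{\gamma\mathcal{B}}x^*$; by Lemma~\ref{zero-point1}(a), $\mathcal{J}_{\gamma\mathcal{B}}({\rm Fix}\,\mathcal{T})=\mathcal{F}^{-1}(0)$, hence $z^*\in\mathcal{F}^{-1}(0)$.

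For part~(b), with $\mathcal{S}_{\gamma,\mathcal{A},\mathcal{B}}$ metrically subregular at $(x^*,0)\in{\rm gph}\,\mathcal{S}_{\gamma,\mathcal{A},\mathcal{B}}$ with constant $\kappa>0$, I would apply Theorem~\ref{theorem-GPPA}(b) verbatim, again with $\mathcal{F}$ replaced by $\mathcal{S}_{\gamma,\mathcal{A},\mathcal{B}}$ and resolvent index $\gamma=1$; substituting $\gamma=1$ into the rate $\sqrt{1-\lambda_k(2-\lambda_k)\gamma^2/(\gamma+\kappa)^2}$ of Theorem~\ref{theorem-GPPA}(b) yields exactly \eqref{DRS-PPA}, with the threshold index $\overline{k}$ inherited from that theorem.

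The only point that needs genuine care---and the step I expect to be the main obstacle---is justifying the two structural facts borrowed from \cite[Theorem 5]{EB92}: that the Douglas--Rachford operator is genuinely the resolvent of a maximally monotone operator (equivalently, that $\mathcal{T}$ is firmly nonexpansive on all of $\mathbb{X}$, which reflects Minty's parametrization of maximal monotone operators by firmly nonexpansive maps) and that ${\rm Fix}\,\mathcal{T}=\mathcal{S}_{\gamma,\mathcal{A},\mathcal{B}}^{-1}(0)$; once these are in place, everything else is a word-for-word instantiation of Theorem~\ref{theorem-GPPA}. I would also remark in passing that, unlike Theorem~\ref{GDRS-convergence}, this route imposes no Lipschitz or cocoercivity hypothesis on $\mathcal{A}$ or $\mathcal{B}$, at the cost of phrasing the regularity assumption on the less transparent operator $\mathcal{S}_{\gamma,\mathcal{A},\mathcal{B}}$ rather than on $\mathcal{F}$ itself.
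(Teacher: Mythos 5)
Your proposal is correct and is exactly the route the paper intends: the paper states this theorem without a separate proof, justifying it by the identity $\mathcal{T}=\mathcal{J}_{\mathcal{S}_{\gamma,\mathcal{A},\mathcal{B}}}$ from \cite[Theorem 5]{EB92} and a direct appeal to Theorem~\ref{theorem-GPPA} with resolvent index $1$, which is precisely your reduction (and substituting $\gamma=1$ into the rate of Theorem~\ref{theorem-GPPA}(b) indeed gives \eqref{DRS-PPA}). Your additional step deducing $z^k\to z^*\in\mathcal{F}^{-1}(0)$ via the nonexpansiveness of $\mathcal{J}_{\gamma\mathcal{B}}$ and Lemma~\ref{zero-point1}(a) is a correct filling-in of a detail the paper leaves implicit.
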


  Corman and Yuan \cite{CYuan14} derived the linear convergence rate of the generalized
  DRS method under the assumption that $\mathcal{S}_{\gamma,\mathcal{A},\mathcal{B}}$ is strongly
  monotone (implied by the strong monotonicity of $\mathcal{F}$ and one of $\mathcal{A}$
  and $\mathcal{B}$ is firmly nonexpansive), which is stronger than the metric subregularity
  of $\mathcal{S}_{\gamma,\mathcal{A},\mathcal{B}}$ by Proposition \ref{prop4.1} in subsection \ref{relation}.
  More recently,  Liang et al. \cite{LFP15} establish its local linear convergence rate like (\ref{DRS-PPA})
   under the metric subregularity of $\big(\mathcal{I}-\mathcal{T}\big)$ at a point
  $(z^*,0)\in {\rm gph}\,(\mathcal{I}-\mathcal{T})$
  which is equivalent to the metric subregular of $\mathcal{S}_{\gamma,\mathcal{A},\mathcal{B}}$ at
  $(z^*,0)\in {\rm gph}\,\mathcal{S}_{\gamma,\mathcal{A},\mathcal{B}}$ according to lemma \ref{relation-AB}
  in the subsection \ref{relation}.

  \medskip

  Although, the linear convergence of the generalized DRS algorithm can be deriveed
  under the metric subregularity of $\mathcal{S}_{\gamma,\mathcal{A},\mathcal{B}}$
  or $\big(\mathcal{I}-\mathcal{T}\big)$ at a point of its graph, this regular condition
  may be too difficult to be certified since that $\mathcal{S}_{\gamma,\mathcal{A},\mathcal{B}}$
   is highly compound of $\mathcal{A}$ and $\mathcal{B}$.
  On the contrast, the metric subregularity of $\mathcal{F}:=\mathcal{A}+\mathcal{B}$ at the point
   $(z^*,0)\in {\rm gph}\,\mathcal{F}$ may be slightly
  easier to check due to its simple formulation. In the last subsection, we will give some sufficient
  conditions to ensure the metric subregularity of $\mathcal{F}=\mathcal{A}+\mathcal{B}$.
  \subsection{Davis' three-operator splitting algorithm}\label{subsec3.4}

  Davis's splitting method \cite{Davis15} for the inclusion problem \eqref{composite-inclusion}
  takes the following iterations
  \begin{numcases}{}\label{Davis-method}
   z^{k} = \mathcal{J}_{\gamma\mathcal{B}}(x^{k}),\nonumber\\
   y^{k} = \mathcal{J}_{\gamma\mathcal{A}}(2z^{k}-x^{k}-\gamma\mathcal{C}z^k),\\
   x^{k+1} = x^{k}+\lambda_k(y^{k}-z^{k}).\nonumber
  \end{numcases}
  Let
  \(
  \mathcal{T}:=\mathcal{I}-\mathcal{J}_{\gamma\mathcal{B}}+\mathcal{J}_{\gamma\mathcal{A}}
               \circ(2\mathcal{J}_{\gamma\mathcal{B}}-\mathcal{I}-\gamma\mathcal{C}\circ\mathcal{J}_{\gamma\mathcal{B}}).
  \)
  Then, with this operator, the iterations in equation \eqref{Davis-method} can be compactly
  written as
  \(
    x^{k+1} = x^{k}+\lambda_k(\mathcal{T}x^{k}-x^{k}).
  \)

  \medskip

 The following lemma present the relation between the solution set $\mathcal{F}^{-1}(0)$
 and the fixed-point set ${\rm Fix}\,\mathcal{T}$. For the sake of coherence, its proof is given in the appendix.
 \begin{lemma}\label{zero-point2}
  The set $\mathcal{F}^{-1}(0)$ has the following relations with the fixed-point set ${\rm Fix}\,\mathcal{T}$:
   \begin{description}
    \item [(a)] $\mathcal{F}^{-1}(0)=\mathcal{J}_{\gamma\mathcal{B}}({\rm Fix}\,\mathcal{T})$. If $\mathcal{B}$ is single-valued,
                ${\rm Fix}\mathcal{T}=(\mathcal{I}+\gamma\mathcal{B})(\mathcal{F}^{-1}(0))$.
    \item [(b)] ${\rm Fix}\mathcal{T}\!\subseteq\!(\mathcal{I}\!-\gamma(\mathcal{A}+\mathcal{C}))(\mathcal{F}^{-1}(0))$.
                 If $\mathcal{A}$ is single-valued, ${\rm Fix}\mathcal{T}\!=\!(\mathcal{I}\!-\gamma(\mathcal{A}+\mathcal{C}))(\mathcal{F}^{-1}(0))$.
  \end{description}
 \end{lemma}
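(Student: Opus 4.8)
The plan is to mimic the structure of the proof of Lemma \ref{zero-point1}, which handles the $\mathcal{C}$-free case, and to track the extra forward term $\gamma\mathcal{C}\circ\mathcal{J}_{\gamma\mathcal{B}}$ through the fixed-point identity. Write $x\in{\rm Fix}\,\mathcal{T}$, set $z:=\mathcal{J}_{\gamma\mathcal{B}}x$, and note that $x=\mathcal{T}x$ unwinds to $z=\mathcal{J}_{\gamma\mathcal{A}}(2z-x-\gamma\mathcal{C}z)$, i.e. $z$ is simultaneously the $\mathcal{B}$-resolvent output and the $\mathcal{A}$-resolvent output in the algorithmic update. From $z=\mathcal{J}_{\gamma\mathcal{B}}x$ we get $\gamma^{-1}(x-z)\in\mathcal{B}z$, and from $z=\mathcal{J}_{\gamma\mathcal{A}}(2z-x-\gamma\mathcal{C}z)$ we get $\gamma^{-1}(2z-x-\gamma\mathcal{C}z-z)=\gamma^{-1}(z-x)-\mathcal{C}z\in\mathcal{A}z$. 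Adding these two inclusions, the $\gamma^{-1}(x-z)$ and $\gamma^{-1}(z-x)$ cancel, leaving $-\mathcal{C}z+\mathcal{C}z\cdots$ — more carefully, $0=\gamma^{-1}(x-z)+\gamma^{-1}(z-x)\in\mathcal{B}z+\big(\mathcal{A}z+\mathcal{C}z\big)$, so $0\in\mathcal{F}z$, i.e. $z\in\mathcal{F}^{-1}(0)$. This shows $\mathcal{J}_{\gamma\mathcal{B}}({\rm Fix}\,\mathcal{T})\subseteq\mathcal{F}^{-1}(0)$.

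For the reverse inclusion in (a), take $z\in\mathcal{F}^{-1}(0)$, so there exist $a\in\mathcal{A}z$, $b\in\mathcal{B}z$ with $a+b+\mathcal{C}z=0$. Put $x:=z+\gamma b$; then $z=\mathcal{J}_{\gamma\mathcal{B}}x$ (since $\gamma^{-1}(x-z)=b\in\mathcal{B}z$), and I must verify $x\in{\rm Fix}\,\mathcal{T}$, i.e. $z=\mathcal{J}_{\gamma\mathcal{A}}(2z-x-\gamma\mathcal{C}z)$. Now $2z-x-\gamma\mathcal{C}z=z-\gamma b-\gamma\mathcal{C}z=z+\gamma a$ using $b+\mathcal{C}z=-a$, and $z=\mathcal{J}_{\gamma\mathcal{A}}(z+\gamma a)$ because $a\in\mathcal{A}z$. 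Hence $\mathcal{T}x=x-z+z=x$, so $x\in{\rm Fix}\,\mathcal{T}$ and $z=\mathcal{J}_{\gamma\mathcal{B}}x\in\mathcal{J}_{\gamma\mathcal{B}}({\rm Fix}\,\mathcal{T})$. This proves the equality $\mathcal{F}^{-1}(0)=\mathcal{J}_{\gamma\mathcal{B}}({\rm Fix}\,\mathcal{T})$. When $\mathcal{B}$ is single-valued the element $b$ above is unique, $b=\mathcal{B}z$, so the construction $x=z+\gamma\mathcal{B}z=(\mathcal{I}+\gamma\mathcal{B})z$ shows ${\rm Fix}\,\mathcal{T}\supseteq(\mathcal{I}+\gamma\mathcal{B})(\mathcal{F}^{-1}(0))$; combined with the first part (every $x\in{\rm Fix}\,\mathcal{T}$ satisfies $x=z+\gamma b$ with $z=\mathcal{J}_{\gamma\mathcal{B}}x\in\mathcal{F}^{-1}(0)$ and $b=\mathcal{B}z$) this gives the stated equality.

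For part (b), reuse the computation from the forward direction of (a): any $x\in{\rm Fix}\,\mathcal{T}$ has the form $x=z+\gamma b$ where $z=\mathcal{J}_{\gamma\mathcal{B}}x\in\mathcal{F}^{-1}(0)$, $b\in\mathcal{B}z$, and there is $a\in\mathcal{A}z$ with $a+b+\mathcal{C}z=0$; then $x=z+\gamma b=z-\gamma a-\gamma\mathcal{C}z=\big(\mathcal{I}-\gamma(\mathcal{A}+\mathcal{C})\big)z$ read off along the selection $a\in\mathcal{A}z$, giving $x\in\big(\mathcal{I}-\gamma(\mathcal{A}+\mathcal{C})\big)(\mathcal{F}^{-1}(0))$. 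For the converse under $\mathcal{A}$ single-valued, take $z\in\mathcal{F}^{-1}(0)$ with $b\in\mathcal{B}z$ and $\mathcal{A}z+b+\mathcal{C}z=0$; set $x:=z-\gamma(\mathcal{A}z+\mathcal{C}z)=z+\gamma b$, which by the construction in (a) lies in ${\rm Fix}\,\mathcal{T}$, so $\big(\mathcal{I}-\gamma(\mathcal{A}+\mathcal{C})\big)(\mathcal{F}^{-1}(0))\subseteq{\rm Fix}\,\mathcal{T}$, yielding equality. The only delicate point — and the one I would state carefully — is the bookkeeping of set-valued sums: the identity $\mathcal{A}+\mathcal{B}+\mathcal{C}$ means one must choose compatible selections $a\in\mathcal{A}z,b\in\mathcal{B}z$ at the \emph{same} $z$ whose sum with $\mathcal{C}z$ vanishes, and the resolvent identities must be applied to those \emph{same} selections; single-valuedness of $\mathcal{B}$ (resp. $\mathcal{A}$) is exactly what upgrades the one-sided inclusions to equalities, since it removes the ambiguity in $b$ (resp. in $a$). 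No real obstacle beyond this care; the cocoercive term $\mathcal{C}$ is single-valued throughout, so it never obstructs the cancellation.
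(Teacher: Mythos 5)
Your proof is correct, and every step checks out against the paper's definitions: the unwinding of $x\in{\rm Fix}\,\mathcal{T}$ into $z=\mathcal{J}_{\gamma\mathcal{B}}x$ and $z=\mathcal{J}_{\gamma\mathcal{A}}(2z-x-\gamma\mathcal{C}z)$, the two resolvent inclusions $\gamma^{-1}(x-z)\in\mathcal{B}z$ and $\gamma^{-1}(z-x)-\mathcal{C}z\in\mathcal{A}z$, and their cancellation to give $0\in\mathcal{F}z$ are all sound, as is the reverse construction $x=z+\gamma b$ with $b+\mathcal{C}z=-a$. The route is genuinely different from the paper's in one respect: the paper does not prove part (a) at all but cites it from Davis--Yin, and for part (b) it starts from the cited structural formula ${\rm Fix}\,\mathcal{T}=\bigcup_{u\in\mathcal{F}^{-1}(0)}\big[u+\gamma\big((-\mathcal{A}u-\mathcal{C}u)\cap\mathcal{B}u\big)\big]$ and then reads off the inclusion and the single-valued equality case exactly as you do. In effect you re-derive that structural formula from first principles via the resolvent calculus, which makes your argument self-contained at the cost of a little more bookkeeping; the paper's version is shorter but depends on the external lemma. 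Your closing remark about choosing compatible selections $a\in\mathcal{A}z$, $b\in\mathcal{B}z$ at the same $z$ is exactly the right point of care, and your observation that single-valuedness of $\mathcal{B}$ (resp.\ $\mathcal{A}$) is what upgrades the inclusions to equalities matches the paper's logic precisely.
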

 \begin{theorem}\label{Davis-convergence}
   Let $\{x^k\}$ be the sequence generated by \eqref{Davis-method} with
   $\{\lambda_k\}\subseteq[0,\frac{4\vartheta-\gamma}{2\vartheta}]$ such that
   $\sum_{k=0}^{\infty}\lambda_k(\frac{4\vartheta-\gamma}{2\vartheta}-\lambda_k)=+\infty$
   for $\gamma\in(0,2\vartheta)$. Then, the following statements hold.
   \begin{description}
    \item[(a)] $\{y^k\}$ and $\{z^k\}$ converge to $z^*\in\mathcal{F}^{-1}(0)$, and
               $\{x^k\}$ converges to $x^*\in{\rm Fix}\,\mathcal{T}$, and
               \begin{equation*}
                \|x^{k+1}\!-\!x\|^2 \le \|x^{k}\!-\!x\|^2-\!\lambda_k\big(\frac{4\vartheta\!-\!\gamma}{2\vartheta}-\!\lambda_k\big)\|\mathcal{T}x^{k}-x^k\|^2
                \quad\forall x\in {\rm Fix}\,\mathcal{T}\ {\rm and}\ \forall k\in\mathbb{N}.
               \end{equation*}

    \item[(b)] If $\mathcal{A}$ is single-valued and Lipschitz continuous with modulus $1/\beta$  and
               $\mathcal{F}$ is metric subregular with constant $\kappa>0$ at $(z^*,0)$,
               then there exists $\overline{k}\in\mathbb{N}$ such that
               \begin{equation}\label{Davis-ineq2}
                  {\rm dist}^2(x^{k+1},{\rm Fix}\,\mathcal{T})\le (1-\!\varrho)\,{\rm dist}^2(x^{k},{\rm Fix}\,\mathcal{T})
                  \quad\ {\rm for}\ k\ge\overline{k}
              \end{equation}
               with
               \[
                  \varrho=\frac{\lambda_k(4\vartheta-\gamma-2\theta\lambda_k)}{ 2\vartheta
                   \big[(2+\gamma\vartheta^{-1})+(\gamma\vartheta^{-1}\!+\!\sqrt{1+\gamma^2\beta^{-2}})(1+\kappa(\gamma^{-1}\!+\!\beta^{-1}))\big]^2}.
               \]

    \item[(c)]  If $\mathcal{B}$ is single-valued and Lipschitz continuous with modulus $1/\beta$ and
               $\mathcal{F}$ is metric subregular with constant $\kappa>0$ at $(z^*,0)$, then there exists
               $\overline{k}\in\mathbb{N}$ such that \eqref{Davis-ineq2} holds for all $k\ge \overline{k}$ with
                \[
                  \varrho=\frac{\lambda_k(4\vartheta-\gamma-2\theta\lambda_k)}{ 2\vartheta(1+\gamma\beta^{-1})
                   \Big[1+\kappa\big(\frac{1}{\beta}+\frac{1}{\gamma}\sqrt{1+\max\big(\frac{\gamma^2-2\gamma\vartheta}{\vartheta^2},0\big)}\big)\Big]^2}.
               \]

   \end{description}
 \end{theorem}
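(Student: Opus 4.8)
The plan is to mirror the argument of Theorem~\ref{GDRS-convergence}, carrying the extra $\vartheta$-cocoercive operator $\mathcal{C}$ through the error bounds. For part~(a), I would first record that, for $\gamma\in(0,2\vartheta)$, the operator $\mathcal{T}=\mathcal{I}-\mathcal{J}_{\gamma\mathcal{B}}+\mathcal{J}_{\gamma\mathcal{A}}\circ(2\mathcal{J}_{\gamma\mathcal{B}}-\mathcal{I}-\gamma\mathcal{C}\circ\mathcal{J}_{\gamma\mathcal{B}})$ is $\frac{2\vartheta}{4\vartheta-\gamma}$-averaged, which is precisely the averagedness constant established in the convergence analysis of \cite{Davis15}. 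Then Lemma~\ref{main-lemma2} with $\mu_k=\lambda_k\in[0,\frac{4\vartheta-\gamma}{2\vartheta}]$ yields that $\{x^k\}$ and $\{\mathcal{T}x^k\}$ converge to some $x^*\in{\rm Fix}\,\mathcal{T}$ and gives the stated Fej\'er inequality. Since $z^k=\mathcal{J}_{\gamma\mathcal{B}}(x^k)$ and $\mathcal{J}_{\gamma\mathcal{B}}$ is nonexpansive, $z^k\to z^*:=\mathcal{J}_{\gamma\mathcal{B}}(x^*)$, which lies in $\mathcal{F}^{-1}(0)$ by Lemma~\ref{zero-point2}(a); finally $y^k-z^k=\mathcal{T}x^k-x^k\to0$, so $y^k\to z^*$ as well.

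For part~(b), I would extract from \eqref{Davis-method} the inclusions $\gamma^{-1}(x^k-z^k)\in\mathcal{B}z^k$ and, using the single-valuedness of $\mathcal{A}$, $\mathcal{A}y^k=\gamma^{-1}(2z^k-x^k-y^k)-\mathcal{C}z^k$; adding these and inserting $\mathcal{A}z^k-\mathcal{A}y^k$ gives the key relation $\gamma^{-1}(z^k-y^k)+\mathcal{A}z^k-\mathcal{A}y^k\in\mathcal{F}z^k$. Since $z^k\to z^*$ by part~(a), the metric subregularity of $\mathcal{F}$ at $(z^*,0)$ together with the Lipschitz bound $\|\mathcal{A}z^k-\mathcal{A}y^k\|\le\beta^{-1}\|z^k-y^k\|$ gives, for all large $k$, ${\rm dist}(z^k,\mathcal{F}^{-1}(0))\le\kappa(\gamma^{-1}+\beta^{-1})\|z^k-y^k\|$, hence ${\rm dist}(y^k,\mathcal{F}^{-1}(0))\le[1+\kappa(\gamma^{-1}+\beta^{-1})]\|z^k-y^k\|$. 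Setting $p^k:=\Pi_{\mathcal{F}^{-1}(0)}(y^k)$, Lemma~\ref{zero-point2}(b) yields $p^k-\gamma(\mathcal{A}+\mathcal{C})p^k\in{\rm Fix}\,\mathcal{T}$; substituting $x^k=2z^k-y^k-\gamma\mathcal{A}y^k-\gamma\mathcal{C}z^k$ and splitting $x^k-p^k+\gamma(\mathcal{A}+\mathcal{C})p^k$ as $[2(z^k-y^k)-\gamma(\mathcal{C}z^k-\mathcal{C}p^k)]+[(y^k-p^k)-\gamma(\mathcal{A}y^k-\mathcal{A}p^k)]$, I would bound the first bracket with the $\vartheta^{-1}$-Lipschitz property of $\mathcal{C}$ and the second with $\sqrt{1+\gamma^2\beta^{-2}}\,\|y^k-p^k\|$ via the monotonicity and Lipschitzness of $\mathcal{A}$. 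Combining with the above estimate of $\|y^k-p^k\|$ produces ${\rm dist}(x^k,{\rm Fix}\,\mathcal{T})\le[(2+\gamma\vartheta^{-1})+(\gamma\vartheta^{-1}+\sqrt{1+\gamma^2\beta^{-2}})(1+\kappa(\gamma^{-1}+\beta^{-1}))]\|z^k-y^k\|$; since $\mathcal{T}x^k-x^k=y^k-z^k$, inserting this into the part~(a) inequality with $x=\Pi_{{\rm Fix}\,\mathcal{T}}(x^k)$ and squaring gives \eqref{Davis-ineq2} with the claimed $\varrho$.

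Part~(c) follows the same scheme with the roles of $\mathcal{A}$ and $\mathcal{B}$ swapped. Here \eqref{Davis-method} yields $\gamma^{-1}(z^k-y^k)+(\mathcal{B}y^k-\mathcal{B}z^k)+(\mathcal{C}y^k-\mathcal{C}z^k)\in\mathcal{F}y^k$; I would group $\gamma^{-1}(z^k-y^k)+(\mathcal{C}y^k-\mathcal{C}z^k)$ and use the $\vartheta$-cocoercivity of $\mathcal{C}$ to bound that group by $\gamma^{-1}\sqrt{1+\max(\tfrac{\gamma^2-2\gamma\vartheta}{\vartheta^2},0)}\,\|z^k-y^k\|$, then add $\|\mathcal{B}y^k-\mathcal{B}z^k\|\le\beta^{-1}\|z^k-y^k\|$. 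Metric subregularity then bounds ${\rm dist}(y^k,\mathcal{F}^{-1}(0))$, hence ${\rm dist}(z^k,\mathcal{F}^{-1}(0))$, in terms of $\|z^k-y^k\|$; with $q^k:=\Pi_{\mathcal{F}^{-1}(0)}(z^k)$, Lemma~\ref{zero-point2}(a) gives $q^k+\gamma\mathcal{B}q^k\in{\rm Fix}\,\mathcal{T}$, and since $x^k=z^k+\gamma\mathcal{B}z^k$ we obtain ${\rm dist}(x^k,{\rm Fix}\,\mathcal{T})\le(1+\gamma\beta^{-1})\,{\rm dist}(z^k,\mathcal{F}^{-1}(0))$. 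Plugging this into the part~(a) inequality as before yields \eqref{Davis-ineq2} with the stated $\varrho$.

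I expect the main obstacle to be not conceptual but the careful accounting in parts~(b) and~(c): one must choose the splittings of $x^k-\Pi(\cdot)+\gamma(\cdots)$ so that the forward-step error on $z^k$ (governed by $\mathcal{C}$) and the error at the projected point (governed by $\mathcal{A}$ or $\mathcal{B}$) assemble into exactly the constants appearing in $\varrho$. In part~(c) one must genuinely invoke cocoercivity rather than a crude Lipschitz estimate on the $\mathcal{C}$-term: expanding $\|\gamma^{-1}(z^k-y^k)+(\mathcal{C}y^k-\mathcal{C}z^k)\|^2$ produces a cross term $2\gamma^{-1}\langle z^k-y^k,\mathcal{C}y^k-\mathcal{C}z^k\rangle$, and the hypothesis $\gamma<2\vartheta$ is exactly what forces the coefficient $1-2\vartheta/\gamma$ of $\|\mathcal{C}y^k-\mathcal{C}z^k\|^2$ to be nonpositive, so that term can be discarded and the $\max(\cdot,0)$ in $\varrho$ collapses.
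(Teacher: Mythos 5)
Your proposal is correct and follows essentially the same route as the paper's own proof: part (a) via the $\frac{2\vartheta}{4\vartheta-\gamma}$-averagedness from \cite{Davis15} together with Lemmas \ref{main-lemma2} and \ref{zero-point2}, part (b) via the inclusion $\gamma^{-1}(z^k-y^k)+\mathcal{A}z^k-\mathcal{A}y^k\in\mathcal{F}z^k$ and the decomposition of $x^k-\Pi_{\mathcal{F}^{-1}(0)}(y^k)+\gamma(\mathcal{A}+\mathcal{C})\Pi_{\mathcal{F}^{-1}(0)}(y^k)$, and part (c) via the cocoercivity estimate on $\gamma^{-1}(z^k-y^k)+\mathcal{C}y^k-\mathcal{C}z^k$ that makes the $\max(\cdot,0)$ term vanish for $\gamma\in(0,2\vartheta)$. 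The only cosmetic difference is how the $\mathcal{C}$-terms are grouped in part (b), which yields the same constant after the triangle inequality.
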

 \begin{proof}
  (a) By \cite[Proposition 3.1]{Davis15}, $\mathcal{T}$ is $\alpha$-averaged
  with $\alpha=\frac{2\vartheta}{4\vartheta-\gamma}$ for $\gamma\in(0,2\vartheta)$.
  By Lemma \ref{zero-point2}(a),
  $\mathcal{F}^{-1}(0)=\mathcal{J}_{\gamma\mathcal{B}}({\rm Fix}\,\mathcal{T})$.
  Thus, the result directly follows by Lemma \ref{main-lemma2}.

  \medskip
  \noindent
 (b) From the iteration step \eqref{Davis-method},
  \(
    \gamma^{-1}(x^{k}-z^{k})\in\mathcal{B}z^{k}
  \)
  and
  \(
    \gamma^{-1}(2z^{k}-x^{k}-\gamma\mathcal{C}z^{k}-y^{k})\in \mathcal{A}y^{k}.
  \)
  Hence, we have
 \(
    \gamma^{-1}(z^{k}-y^{k})\in \mathcal{A}y^{k}+\mathcal{B}z^{k}+\mathcal{C}z^{k},
 \)
 which further implies that
 \[
   \gamma^{-1}(z^k-y^k)+\mathcal{A}z^k-\mathcal{A}y^k\in\mathcal{A}z^{k}+\mathcal{B}z^{k}+\mathcal{C}z^{k}\in\mathcal{F}z^k.
 \]
 Since $\mathcal{F}$ is metrically subregular at $(z^*,0)$ with constant $\kappa$
 and $z^k\to z^*$ by part (a), there exists $\overline{k}\in\mathbb{N}$ such that for all $k\ge \overline{k}$,
 the latter inequalities hold
 \begin{align*}
  {\rm dist}(z^{k},\mathcal{F}^{-1}(0))
  \leq\! \kappa {\rm dist}(0,\mathcal{F}(z^{k}))
  \!\leq\! \kappa\|\gamma^{-1}(z^{k}\!-\!y^{k})\!+\!\mathcal{A}z^k\!-\!\mathcal{A}y^k\|
  \!\le\! \kappa(\gamma^{-1}\!+\!\beta^{-1})\|z^k\!-\!y^k\|
 \end{align*}
 where the last inequality is due to the Lipschitz continuity of $\mathcal{A}$. Thus, for all $k\ge \overline{k}$,
 \begin{equation}\label{Davis-ineq432}
  {\rm dist}(y^{k},\mathcal{F}^{-1}(0))
  \le  {\rm dist}(z^{k},\mathcal{F}^{-1}(0)) +\|z^{k}-y^{k}\|
  \le \Big[1+\kappa(\gamma^{-1}+\beta^{-1})\Big]\|z^k-y^k\|.
  \end{equation}
  Let $\overline{z}^k=\Pi_{\mathcal{F}^{-1}(0)}(y^k)-\gamma(\mathcal{A}+\mathcal{C})(\Pi_{\mathcal{F}^{-1}(0)}(y^k))$.
  From Lemma \ref{zero-point2}(b), $\overline{z}^k\in{\rm Fix}\mathcal{T}$. In addition, notice that
  $x^k=2z^k-y^k-\gamma\mathcal{A}y^k-\gamma\mathcal{C}z^k$ by \eqref{Davis-method}. Hence,
  for all $k\ge\overline{k}$,
  \begin{align*}
   {\rm dist}(x^{k},{\rm Fix}\,\mathcal{T})
   &\le\|x^k-\Pi_{\mathcal{F}^{-1}(0)}(y^k)+\gamma(\mathcal{A}+\mathcal{C})(\Pi_{\mathcal{F}^{-1}(0)}(y^k))\|\\
   &= \|2z^k-y^k-\gamma\mathcal{A}y^k-\gamma\mathcal{C}z^k-\Pi_{\mathcal{F}^{-1}(0)}(y^k)+\gamma(\mathcal{A}+\mathcal{C})(\Pi_{\mathcal{F}^{-1}(0)}(y^k))\|\\
   &\le \|2z^k-2y^k\|+\|y^k-\Pi_{\mathcal{F}^{-1}(0)}(y^k)-\gamma\mathcal{A}y^k+\gamma\mathcal{A}(\Pi_{\mathcal{F}^{-1}(0)}(y^k))\| \\
   &\quad +\|\gamma \mathcal{C}y^k-\gamma\mathcal{C}z^k+\gamma\mathcal{C}(\Pi_{\mathcal{F}^{-1}(0)}(y^k))-\gamma \mathcal{C}y^k\|\\
   &\le (2+\gamma\vartheta^{-1})\|z^k-y^k\|+(\gamma\vartheta^{-1}+\sqrt{1+\gamma^2\beta^{-2}}){\rm dist}(y^k,\mathcal{F}^{-1}(0))\\
   &\le \big[(2+\gamma\vartheta^{-1})+(\gamma\vartheta^{-1}\!+\!\sqrt{1+\gamma^2\beta^{-2}})(1+\kappa(\gamma^{-1}\!+\!\beta^{-1}))\big]\|z^k-y^k\|,
  \end{align*}
  where the third inequality is using the monotonicity of $\mathcal{A}$ and
  the last one is due to \eqref{Davis-ineq432}. In addition, using part (a) and
  the same arguments as for Theorem \ref{GDRS-convergence}(b) yields
  \begin{equation}\label{Davis-ineq433}
  {\rm dist}(x^{k+1},{\rm Fix}\,\mathcal{T})^2
  \le {\rm dist}(x^{k},{\rm Fix}\,\mathcal{T})^2-\lambda_k\big(\frac{4\vartheta-\gamma}{2\vartheta}-\!\lambda_k\big)\|y^k-z^k\|^2.
  \end{equation}
  The desired result then follows from the last two inequalities.

 \medskip
 \noindent
 (c) By the proof of part (b), we have
 $\gamma^{-1}(z^{k}-y^{k}) \in \mathcal{A}y^{k}+\mathcal{B}z^{k}+\mathcal{C}z^{k}$.
 This along with the single-value $\mathcal{B}$ yield
 \(
   \gamma^{-1}(z^k-y^k)+(\mathcal{B}y^k-\mathcal{B}z^k)\in\mathcal{A}y^{k}+\mathcal{B}y^{k}+\mathcal{C}z^{k}.
 \)
 By the cocoercivity of $\mathcal{C}$, we obtain that
 \(
  \gamma^{-1}(z^k-y^k)+(\eta^k-\xi^k)+\mathcal{C}y^{k}-\mathcal{C}z^{k}
  \in \mathcal{A}y^{k}+\mathcal{B}y^{k}+\mathcal{C}y^{k}=\mathcal{F}y^k.
 \)
 Since $\mathcal{F}$ is metrically subregular at $(z^*,0)$ with constant $\kappa$,
 from part (a) and the Lipschitzian of $\mathcal{B}$ and the cocoercivity of $\mathcal{C}$,
  it follows that there exists $\overline{k}$ such that for all $k\ge \overline{k}$,
 \begin{align*}
  {\rm dist}(y^{k},\mathcal{F}^{-1}(0))
  &\leq \kappa {\rm dist}(0,\mathcal{F}(y^{k}))
  \leq \kappa\|\gamma^{-1}(z^{k}-y^{k})+(\eta^k-\xi^k)+\mathcal{C}y^{k}-\mathcal{C}z^{k}\|\nonumber\\
  &\le \kappa\left[\|\eta^k-\xi^k\|+\|\gamma^{-1}(z^{k}-y^{k})+\mathcal{C}y^{k}-\mathcal{C}z^{k}\|\right]\nonumber\\
  &\leq \kappa\Big[\frac{1}{\beta}+\sqrt{\frac{1}{\gamma^2}+\max\big(\frac{\gamma-2\vartheta}{\gamma\vartheta^2},0\big)}\Big]\|z^k-y^k\|.
 \end{align*}
  Combing this inequality with  ${\rm dist}(z^{k},\mathcal{F}^{-1}(0))\!\leq\! {\rm dist}(y^{k},\mathcal{F}^{-1}(0))\!+\!\|z^k\!-\!y^k\|$ implies that
 \begin{align*}
  {\rm dist}(z^{k},\mathcal{F}^{-1}(0))
  \le \left[1+\kappa\Big(\frac{1}{\beta}+\frac{1}{\gamma}\sqrt{1+\max\big(\frac{\gamma^2-2\gamma\vartheta}{\vartheta^2},0\big)}\Big)\right]\|z^k-y^k\|,\,
  \forall k\ge \overline{k}
  \end{align*}
  From ${\rm Fix}\,\mathcal{T}=\mathcal{J}_{\gamma\mathcal{B}}^{-1}\mathcal{F}^{-1}(0)$,
  it follows that
  \(
    \Pi_{\mathcal{F}^{-1}(0)}(z^k)+\gamma\mathcal{B}(\Pi_{\mathcal{F}^{-1}(0)}(z^k))\in {\rm Fix}\,\mathcal{T}.
  \)
  Moreover, using the
  Lipschitzian of $\mathcal{B}$ yields
  \(
    \|\mathcal{B}(\Pi_{\mathcal{F}^{-1}(0)}(z^k))-\mathcal{B}z^k\|\le \beta^{-1}\|\Pi_{\mathcal{F}^{-1}(0)}(z^k)-z^k\|.
  \)
  Combining this inequality with the facts that $x^k=z^k+\gamma\mathcal{B}z^k$ and $\Pi_{\mathcal{F}^{-1}(0)}(z^k)\to z^*$,
  and using the same arguments as for those of Theorem \ref{GDRS-convergence}(c) yield that for all $k\ge \overline{k}$,
  \[
    {\rm dist}(x^{k},{\rm Fix}\,\mathcal{T})
    \le(1+\gamma\beta^{-1})\left[1+\kappa\Big(\frac{1}{\beta}+\frac{1}{\gamma}\sqrt{1+\max\big(\frac{\gamma^2-2\gamma\vartheta}{\vartheta^2},0\big)}\Big)\right]\|z^k-y^k\|.
  \]
  Combining this inequality with \eqref{Davis-ineq433} yields the desired result. The proof is completed.
 \end{proof}

 Davis and Yin \cite{Davis15} derived the linear convergence rate of their algorithm
 under the condition that one of $\mathcal{A},\mathcal{B}$ and $\mathcal{C}$ is strongly monotone
 and one of $\mathcal{A}$ and $\mathcal{B}$ is single-valued and Lipschitz continuous, which is stronger than that of
 Theorem \ref{Davis-convergence} (b) and (c).

  \subsection{Sufficient conditions for the metric subregularity}\label{relation}
  In this section, we give some sufficient conditions to ensure the metric subregularity
  of the maximal monotone operator $\mathcal{F}:=\mathcal{A}+\mathcal{B}$
  under the condition that $\mathcal{B}$ is single valued and Lipshcitz
  continuous with modulus $\frac{1}{\beta}$.
  Write $\mathcal{R}(z)=z-\mathcal{J}_{\gamma\mathcal{A}}(\mathcal{I}-\gamma\mathcal{B})z$
  which is clearly a single valued mapping. In the following lemma, we give an equivalent
  characterization on the metrically subregularity of $\mathcal{F}$ at
  ${z}^*$ for $0\in \mathcal{F}(z^*)$.
  \begin{lemma}\label{relation-AB}
   Let $\mathcal{F}:=\mathcal{A}+\mathcal{B}$ where $\mathcal{A}$ is maximal monotone and
   $\mathcal{B}$ is single valued and Lipschitz continuous with modulus $\frac{1}{\beta}$. Then,
   the mapping $R$ is metrically subregular at ${z}^*$ for
   $0\in \mathcal{R}({z}^*)$ if and only if the operator $\mathcal{F}$ is metrically subregular
   at ${z}^*$ for $0\in \mathcal{F}({z}^*)$.
  \end{lemma}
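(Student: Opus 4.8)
The plan is to pass back and forth between the forward–backward residual $\mathcal{R}(z)=z-\mathcal{J}_{\gamma\mathcal{A}}(\mathcal{I}-\gamma\mathcal{B})z$ and points in ${\rm gph}\,\mathcal{F}$ by means of two elementary one-sided estimates. First I would record the standard fixed-point characterization: $z=\mathcal{J}_{\gamma\mathcal{A}}(z-\gamma\mathcal{B}z)$ iff $-\gamma\mathcal{B}z\in\gamma\mathcal{A}z$ iff $0\in\mathcal{F}z$, so that $\mathcal{R}^{-1}(0)=\mathcal{F}^{-1}(0)$; in particular the hypotheses $0\in\mathcal{R}(z^*)$ and $0\in\mathcal{F}(z^*)$ coincide, and the two subregularity statements concern the same target set.

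Next I would establish the bridge estimates. (i) For any $z$, set $x:=\mathcal{J}_{\gamma\mathcal{A}}(z-\gamma\mathcal{B}z)=z-\mathcal{R}(z)$; from $z-\gamma\mathcal{B}z-x\in\gamma\mathcal{A}x$ one gets $\gamma^{-1}\mathcal{R}(z)+\mathcal{B}x-\mathcal{B}z\in\mathcal{A}x+\mathcal{B}x=\mathcal{F}x$, so, using $\|\mathcal{B}x-\mathcal{B}z\|\le\beta^{-1}\|x-z\|=\beta^{-1}\|\mathcal{R}(z)\|$, we obtain ${\rm dist}(0,\mathcal{F}x)\le(\gamma^{-1}+\beta^{-1})\|\mathcal{R}(z)\|$ together with $\|z-x\|=\|\mathcal{R}(z)\|$. (ii) Conversely, for $x\in{\rm dom}\,\mathcal{F}$ and any $v=a+\mathcal{B}x\in\mathcal{F}x$ with $a\in\mathcal{A}x$, we have $x=\mathcal{J}_{\gamma\mathcal{A}}(x+\gamma a)$, so with $z:=x+\gamma v$ one has $z-\gamma\mathcal{B}z=(x+\gamma a)+\gamma(\mathcal{B}x-\mathcal{B}z)$, and nonexpansiveness of $\mathcal{J}_{\gamma\mathcal{A}}$ together with the Lipschitz bound give $\|\mathcal{J}_{\gamma\mathcal{A}}(z-\gamma\mathcal{B}z)-x\|\le\gamma\beta^{-1}\|z-x\|=\gamma^2\beta^{-1}\|v\|$, hence $\|\mathcal{R}(z)\|\le\gamma(1+\gamma\beta^{-1})\|v\|$ and $\|z-x\|=\gamma\|v\|$.

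For the implication ``$\mathcal{F}$ metrically subregular at $z^*$ $\Rightarrow$ $\mathcal{R}$ metrically subregular at $z^*$'', I would take $z$ close to $z^*$ and put $x=z-\mathcal{R}(z)$; since $\mathcal{R}$ is continuous with $\mathcal{R}(z^*)=0$, $x$ is also close to $z^*$, so the subregularity of $\mathcal{F}$ with some constant $\kappa$ applies at $x$, and (i) yields ${\rm dist}(x,\mathcal{F}^{-1}(0))\le\kappa(\gamma^{-1}+\beta^{-1})\|\mathcal{R}(z)\|$; then ${\rm dist}(z,\mathcal{R}^{-1}(0))={\rm dist}(z,\mathcal{F}^{-1}(0))\le\|z-x\|+{\rm dist}(x,\mathcal{F}^{-1}(0))\le\big(1+\kappa(\gamma^{-1}+\beta^{-1})\big)\|\mathcal{R}(z)\|$, which is the claim.

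For the converse I would take $x$ close to $z^*$; the inequality is vacuous when $\mathcal{F}x=\emptyset$, so I choose $v\in\mathcal{F}x$ with $\|v\|={\rm dist}(0,\mathcal{F}x)$ and set $z=x+\gamma v$. When ${\rm dist}(0,\mathcal{F}x)$ is small enough that $z$ lies in the neighborhood on which $\mathcal{R}$ is subregular, (ii) and that subregularity give ${\rm dist}(x,\mathcal{F}^{-1}(0))\le\|z-x\|+{\rm dist}(z,\mathcal{F}^{-1}(0))\le\gamma(1+\kappa(1+\gamma\beta^{-1}))\,{\rm dist}(0,\mathcal{F}x)$; in the complementary case ${\rm dist}(0,\mathcal{F}x)$ is bounded below by a fixed positive threshold while ${\rm dist}(x,\mathcal{F}^{-1}(0))\le\|x-z^*\|$ is small, so the bound persists after enlarging the constant. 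I expect this last point to be the main obstacle: unlike $\|\mathcal{R}(z)\|$, the quantity ${\rm dist}(0,\mathcal{F}x)$ need not tend to $0$ as $x\to z^*$ (it may stay bounded away from $0$ when $z^*\in{\rm bd}\,({\rm dom}\,\mathcal{A})$), so $z=x+\gamma v$ cannot be fed blindly into the subregularity of $\mathcal{R}$; the short case split, which uses only $z^*\in\mathcal{F}^{-1}(0)$, is what makes the argument go through.
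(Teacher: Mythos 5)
Your proposal is correct, and the direction ``$\mathcal{F}$ subregular $\Rightarrow$ $\mathcal{R}$ subregular'' coincides with the paper's argument (same auxiliary point $x=\mathcal{J}_{\gamma\mathcal{A}}(z-\gamma\mathcal{B}z)$, same inclusion $\gamma^{-1}\mathcal{R}(z)+\mathcal{B}x-\mathcal{B}z\in\mathcal{F}x$, same triangle inequality). The converse direction, however, is where you genuinely diverge. You bound $\|\mathcal{R}(x+\gamma v)\|$ at the \emph{shifted} point $x+\gamma v$, which forces you to worry about whether that point stays in the neighborhood where $\mathcal{R}$ is subregular, and you patch this with a case split on the size of ${\rm dist}(0,\mathcal{F}x)$. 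The paper instead proves the pointwise inequality $\|\mathcal{R}(z)\|\le\gamma\,{\rm dist}(0,\mathcal{F}z)$ \emph{at $z$ itself}: writing $y\in\mathcal{F}z$ with $\|y\|={\rm dist}(0,\mathcal{F}z)$, one has $z=\mathcal{J}_{\gamma\mathcal{A}}(z+\gamma y-\gamma\mathcal{B}z)$, so $\|\mathcal{R}(z)\|=\|\mathcal{J}_{\gamma\mathcal{A}}(z+\gamma y-\gamma\mathcal{B}z)-\mathcal{J}_{\gamma\mathcal{A}}(z-\gamma\mathcal{B}z)\|\le\gamma\|y\|$ by nonexpansiveness of the resolvent, and then the subregularity of $\mathcal{R}$ is applied directly to $z\in\mathbb{B}(z^*,\delta')$ with no auxiliary point and no case analysis. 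So the ``main obstacle'' you correctly identify (that ${\rm dist}(0,\mathcal{F}x)$ need not vanish as $x\to z^*$) is real for your route but is an artifact of it; your case split is a legitimate fix, at the cost of a slightly larger constant and a less transparent argument. Both proofs are valid; the paper's one-line resolvent comparison is the cleaner device and is worth internalizing, since it is the standard way of dominating the forward--backward residual by the distance ${\rm dist}(0,\mathcal{F}z)$.
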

  \begin{proof}
   By the metric subregularity of $R$ at $z^*$ for $0\in R(z^*)$,
  there exist a constant $\kappa'>0$ and a sufficiently small $\delta'>0$ such that
  \begin{equation}\label{Lemma-AB-eq1}
    {\rm dist}\big(z,\mathcal{R}^{-1}(0)\big) \leq \kappa'\|\mathcal{R}(z)\|,
     \quad\forall z\in \mathbb{B}(z^*,\delta')
  \end{equation}
  where $\mathbb{B}(z^*,\delta')$ denotes the closed ball in the space $\mathbb{H}$
  centered at $z^*$ with radius $\delta'$.
  Let $z$ be arbitrary point from $\mathbb{B}(z^*,\delta')$.
  Take $y\!\in\!\mathcal{F}(z)$ as the point such that ${\rm dist}(0,\mathcal{F}(z))\!=\!\|y\|$.
  Notice that $\mathcal{F}\!=\!\mathcal{A}+\mathcal{B}$ and $\mathcal{B}$ is single valued and lipschitz continuous,
  it is easy to get that $z+\gamma y-\gamma\mathcal{B}z\in (\mathcal{I}\!+\!\gamma\mathcal{A})z$ which
  in turn implies that $z = \mathcal{J}_{\gamma\mathcal{A}}\big(z+\gamma y-\gamma\mathcal{B}z\big)$.
  Together with the last equation and the metric subregularity (\ref{Lemma-AB-eq1}) of $\mathcal{R}$, we obtain
  that
  \begin{align*}
  {\rm dist}\big(z, \mathcal{F}^{-1}(0)\big)
  \!=\!{\rm dist}\big(z,\mathcal{R}^{-1}(0)\big)\!\leq\! \kappa\|\mathcal{R}(z)\|
  \!=\! \kappa'\|\mathcal{J}_{\gamma\mathcal{A}}\big(z+\gamma y-\gamma\mathcal{B}z\big)-\mathcal{J}_{\gamma\mathcal{A}}(z-\gamma\mathcal{B}z)\|.
  \end{align*}
  Notice that $\mathcal{J}_{\gamma\mathcal{A}}$ is nonexpansive. This along with the
  equality ${\rm dist}\big(0,\mathcal{F}(z)\big)=\|y\|$ yield
    \begin{align*}
  {\rm dist}\big(z, \mathcal{F}^{-1}(0)\big)
  \le \kappa'\gamma{\rm dist}\big(0,\mathcal{F}(z)\big),
  \end{align*}
  which shows that the operator $\mathcal{F}$ is metrically subregular at $z^*$ for $0\in \mathcal{F}(z^*)$.

  \medskip
  Conversely, suppose that $\mathcal{F}$ is metrically subregular at $z^*$ for $0\in \mathcal{F}(z^*)$.
  Then, there exist a constant $\kappa>0$ and a sufficiently small $\delta>0$ such that
  \begin{equation}\label{temp-equa0}
    {\rm dist}(z,\mathcal{F}^{-1}(0))\le \kappa{\rm dist}(0,\mathcal{F}(z))\quad\forall z\in\mathbb{B}(z^*,\delta).
  \end{equation}
  Take $\delta'\!=\!\frac{\delta}{(1+\gamma\beta^{-1})}$.
  Notice that the equation holds $\mathcal{J}_{\gamma\mathcal{A}}(\mathcal{I}\!-\!\gamma\mathcal{B})z^*=z^*$ since $z^*\in \mathcal{F}^{-1}(0)$.
  Then, $\mathcal{J}_{\gamma\mathcal{A}}(\mathcal{I}\!-\!\gamma\mathcal{B})z \in \mathbb{B}(z^*,\delta)$ due to
  $\|\mathcal{J}_{\gamma\mathcal{A}}(\mathcal{I}\!-\!\gamma\mathcal{B})z\!-\!z^*\|\leq (1\!+\!\gamma\beta^{-1})\|z-z^*\|\!\le\! \delta$.
  Combine $z\!-\!\gamma\mathcal{B}z\!-\!\mathcal{J}_{\gamma\mathcal{A}}(z\!-\!\gamma\mathcal{B}z)
              \!\in\!\gamma\mathcal{A}\big(\mathcal{J}_{\gamma\mathcal{A}}(z\!-\!\gamma\mathcal{B}z)\big)$
  and metric subregularity of $\mathcal{F}$ yield
  \begin{align*}
  &{\rm dist}\big(z,\mathcal{R}^{-1}(0)\big)
  ={\rm dist}\big(z, \mathcal{F}^{-1}(0)\big)\nonumber\\
  &\leq {\rm dist}\big(\mathcal{J}_{\gamma\mathcal{A}}(z\!-\!\gamma\mathcal{B}z), \mathcal{F}^{-1}(0)\big)\!+\!\|z-\mathcal{J}_{\gamma\mathcal{A}}(z-\gamma\mathcal{B}z)\|\nonumber\\
  &\leq \kappa{\rm dist}\big(0,\mathcal{F}(\mathcal{J}_{\gamma\mathcal{A}}(z-\gamma\mathcal{B}z))\big)
           +\|z-\mathcal{J}_{\gamma\mathcal{A}}(z-\gamma\mathcal{B}z)\|\nonumber\\
  &=\kappa{\rm dist}\big(-\mathcal{B}(\mathcal{J}_{\gamma\mathcal{A}}(z-\gamma\mathcal{B}z)),\mathcal{A}(\mathcal{J}_{\gamma\mathcal{A}}(z-\gamma\mathcal{B}z))\big)
           +\|z-\mathcal{J}_{\gamma\mathcal{A}}(z-\gamma\mathcal{B}z)\|\nonumber\\
  &\leq \kappa\|-\mathcal{B}(\mathcal{J}_{\gamma\mathcal{A}}(z-\gamma\mathcal{B}z))
             -\gamma^{-1}[z-\mathcal{J}_{\gamma\mathcal{A}}(z-\gamma\mathcal{B}z)]+\mathcal{B}z)\|
           +\|z-\mathcal{J}_{\gamma\mathcal{A}}(z-\gamma\mathcal{B}z)\|\nonumber\\
  &\leq (1+\gamma^{-1}\kappa+\beta^{-1}\kappa)\|z-\mathcal{J}_{\gamma\mathcal{A}}(z-\gamma\mathcal{B}z)\|
        =(1+\gamma^{-1}\kappa+\beta^{-1}\kappa)\|\mathcal{R}(z)\|
 \end{align*}
 which implies $\mathcal{R}$ is metric subregularity
 at $(z^*,0)\!\in\! {\rm gph}\,\mathcal{R}$. The proof is completed.
 \end{proof}

 By the above lemma and Theorem \ref{GDRS-convergence}, the generalized DRS algorithm
 is linear convergence if  $\mathcal{R}$ is metric subregular at ${z}^*$ for $0\in \mathcal{R}({z}^*)$.
 Moreover, When $\mathcal{B}$ is reduced to $\mathcal{B}=0$, strengthened as a cocoercive operator and
 specified as $\mathcal{B}=\mathcal{C}+\mathcal{D}$ with $\mathcal{C}$ being cocoercive and $\mathcal{D}$
 being single valued and lipshcitz, respectively.
 By Theorem \ref{theorem-GPPA} \ref{convergence-RFBS}, \ref{Davis-convergence} and above Lemma \ref{relation-AB},
 we know that the generalized PPA, the over-relaxed FBS algorithm,
 and the Davis-Yin's three operator splitting method are linearly convergent with
 $\mathcal{R}$ is metric subregular at ${z}^*$ for $0\!\in\!\mathcal{R}({z}^*)$ accordingly.
 Next, we give a sufficient condition to ensure the metric subregularirty of $\mathcal{R}$
 at ${z}^*$ for $0\!\in\!\mathcal{R}({z}^*)$.
 \begin{lemma}\label{projective-EB}
  The single-valued mapping $R$ is metrically subregular at ${z}^*$ for
   $0\in \mathcal{R}({z}^*)$ if the following projection type error bound \cite[Eq. 5]{Tseng95} holds
  \begin{equation}\label{projective-EB-eq1}
    {\rm dist}\big(z,\mathcal{R}^{-1}(0)\big) \leq \kappa''\|\mathcal{R}(z)\|,
     \quad\forall\ z\ {\rm with}\ \|\mathcal{R}(z)\| \leq \delta''
  \end{equation}
 \end{lemma}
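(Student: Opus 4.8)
The plan is to notice that the projection–type error bound \eqref{projective-EB-eq1} is stated over the sublevel set $\{z:\|\mathcal{R}(z)\|\le\delta''\}$, whereas metric subregularity in the sense of Definition \ref{metric-subregular} only asks for the corresponding estimate on some neighborhood $U$ of $z^*$. Since $\mathcal{R}$ is single valued, ${\rm dist}(0,\mathcal{R}(z))=\|\mathcal{R}(z)\|$ for every $z$, so once the two domains are matched the two inequalities coincide with the same constant. Hence the whole content of the lemma is to exhibit a ball around $z^*$ sitting inside that sublevel set, and the argument will be short.

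First I would record two facts about $\mathcal{R}(z)=z-\mathcal{J}_{\gamma\mathcal{A}}(z-\gamma\mathcal{B}z)$. (i) $\mathcal{R}$ is continuous: because $\mathcal{A}$ is maximal monotone, $\mathcal{J}_{\gamma\mathcal{A}}$ is firmly nonexpansive, hence nonexpansive, hence continuous; and $z\mapsto z-\gamma\mathcal{B}z$ is continuous since $\mathcal{B}$ is Lipschitz with modulus $\frac1\beta$; so $\mathcal{R}$, being the sum of the identity and a composition of continuous maps, is continuous. (ii) $\mathcal{R}(z^*)=0$: from $0\in\mathcal{F}(z^*)=\mathcal{A}(z^*)+\mathcal{B}(z^*)$ we get $z^*-\gamma\mathcal{B}(z^*)\in(\mathcal{I}+\gamma\mathcal{A})(z^*)$, that is $z^*=\mathcal{J}_{\gamma\mathcal{A}}(z^*-\gamma\mathcal{B}z^*)$, so $\mathcal{R}(z^*)=z^*-z^*=0$.

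Combining (i) and (ii): continuity of $\mathcal{R}$ at $z^*$ together with $\mathcal{R}(z^*)=0$ produces a radius $\delta'>0$ such that $\|\mathcal{R}(z)\|\le\delta''$ for every $z\in\mathbb{B}(z^*,\delta')$. For each such $z$ the hypothesis \eqref{projective-EB-eq1} then applies and gives ${\rm dist}(z,\mathcal{R}^{-1}(0))\le\kappa''\|\mathcal{R}(z)\|=\kappa''\,{\rm dist}(0,\mathcal{R}(z))$, which is exactly the metric subregularity of $\mathcal{R}$ at $z^*$ for $0\in\mathcal{R}(z^*)$ with constant $\kappa=\kappa''$ and neighborhood $U=\mathbb{B}(z^*,\delta')$. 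There is no substantial obstacle; the only points needing care are the verification that $\mathcal{R}$ is continuous (resting on nonexpansiveness of the resolvent and Lipschitzness of $\mathcal{B}$) and the identity $\mathcal{R}(z^*)=0$. It is worth remarking that the implication is genuinely one-directional: metric subregularity only controls points near $z^*$, so it does not recover \eqref{projective-EB-eq1}, which may impose the estimate on points far from $z^*$ at which $\|\mathcal{R}\|$ happens to be small.
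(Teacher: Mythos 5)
Your proof is correct and takes essentially the same route as the paper's: both observe that $\mathcal{R}(z^*)=0$ and that $\mathcal{R}$ is continuous (via nonexpansiveness of $\mathcal{J}_{\gamma\mathcal{A}}$ and Lipschitzness of $\mathcal{B}$), so a small ball around $z^*$ lies inside the sublevel set $\{z:\|\mathcal{R}(z)\|\le\delta''\}$ where the error bound \eqref{projective-EB-eq1} applies. The only cosmetic difference is that the paper makes the Lipschitz constant explicit, taking $\delta'=\delta''/(2+\gamma\beta^{-1})$, whereas you invoke continuity qualitatively.
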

 \begin{proof}
  Let $\delta'\!=\!\frac{\delta''}{2+\gamma\beta^{-1}}$. For any $z\!\in\! \mathbb{B}(z^*,\delta')$, using the equality
  $z^*\!=\!\mathcal{J}_{\gamma\mathcal{A}}(\mathcal{I}\!-\!\gamma\mathcal{B})^*$ yields
  \begin{equation}
  \|\mathcal{R}(z)\|
  =\|z-z^*-(\mathcal{J}_{\gamma\mathcal{A}}(\mathcal{I}-\gamma\mathcal{B})z-\mathcal{J}_{\gamma\mathcal{A}}(\mathcal{I}-\gamma\mathcal{B})z^*)\|
  \leq (2+\gamma\beta^{-1})\|z-z^*\|\leq \delta''
  \end{equation}
  Together with the above inequality and the projective bound \eqref{projective-EB}, we get the desired results
  that $R$ is metrically subregular at ${z}^*$ for $0\in \mathcal{R}({z}^*)$.
 \end{proof}

 Next, we give certain instances with the projection type error bound \eqref{projective-EB-eq1} or
 metric subregularity of $\mathcal{R}$ holding. Consequently, the metrically subregularity
 of $\mathcal{F}\!=\!\mathcal{A}\!+\!\mathcal{B}$ holds at ${z}^*$ for $0\in \mathcal{F}({z}^*)$. The proof of the following proposition
 is followed directly according to \cite{Tseng95,ZhS15,ZhZhS15}. Here, we omit the details.
\begin{proposition}\label{example}
Let $\mathcal{F}:=\mathcal{A}+\mathcal{B}$ where $\mathcal{A}$ is maximal monotone and
   $\mathcal{B}$ is single valued and Lipschitz continuous. Then, the operator $\mathcal{F}$ is metrically subregular
   at a point ${z}^*$ for $0\in \mathcal{F}({z}^*)$, i.e., there exists $\kappa,\delta$ such that inequality \eqref{temp-equa0} holds
   whenever one of the following statements holds.
 \begin{description}
   \item[(C1)]$\mathcal{F}:=\mathcal{A}+\mathcal{B}$ is strongly monotone;
   \item[(C2)]$\mathcal{B}$ is affine operator and $\mathcal{A}$ is polyhedron operator;
   \item[(C3)]$\mathcal{B}=\mathcal{E}\nabla f(\mathcal{E}\cdot)+C$ where $f$ is strongly convex and gradient
              Lipschitz $\mathcal{E}$ is linear operator and $C$ is a constant. $\mathcal{A}$ is the subdifferential operator
              of $\ell_{p}$ norm with $p\in [1,2]\bigcup \{\infty\}$ or  polyhedral convex function;
   \item[(C4)]$\mathcal{B}=\mathcal{E}\nabla f(\mathcal{E}\cdot)+C$ where $f$ is strongly convex and gradient
              Lipschitz and  $\mathcal{E}$ is linear operator and $C$ is a constant. $\mathcal{A}$ is the subdifferential operator
              of the nuclear norm. In addition, $-\mathcal{B}x \in {\rm ri}(\partial\|x\|_*)$.
 \end{description}
\end{proposition}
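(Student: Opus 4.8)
The plan is to dispatch the four cases separately, since each rests on a different tool.

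\textbf{Case (C1).} If $\mathcal{F}$ is $\mu$-strongly monotone I argue directly and globally. Let $z\in\mathbb{Z}$ with $\mathcal{F}z\neq\emptyset$, let $\bar z=\Pi_{\mathcal{F}^{-1}(0)}(z)$ (a singleton here), and choose $v\in\mathcal{F}z$ with $\|v\|={\rm dist}(0,\mathcal{F}z)$. Since $0\in\mathcal{F}\bar z$, strong monotonicity gives $\langle v,z-\bar z\rangle\ge\mu\|z-\bar z\|^2$, and Cauchy--Schwarz then yields $\|z-\bar z\|\le\mu^{-1}\|v\|$. Hence ${\rm dist}(z,\mathcal{F}^{-1}(0))\le\mu^{-1}{\rm dist}(0,\mathcal{F}z)$ for every $z$, i.e.\ \eqref{temp-equa0} holds with $\kappa=\mu^{-1}$ and arbitrarily large $\delta$.

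\textbf{Case (C2).} When $\mathcal{B}$ is affine and ${\rm gph}\,\mathcal{A}$ is a finite union of polyhedra, the affine bijection $(z,w)\mapsto(z,\,w+\mathcal{B}z)$ maps ${\rm gph}\,\mathcal{A}$ onto ${\rm gph}\,\mathcal{F}$, so $\mathcal{F}$ is a polyhedral multifunction. By Robinson's upper-Lipschitz theorem for polyhedral multifunctions, $\mathcal{F}^{-1}$ is locally upper Lipschitz at $0$ with a uniform constant, which in particular makes $\mathcal{F}$ metrically subregular at $(z^*,0)$; this is also the affine/polyhedral instance of Tseng's error bound. Again one gets \eqref{temp-equa0} with a uniform constant.

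\textbf{Cases (C3) and (C4).} Here I route through the forward--backward residual $\mathcal{R}(z)=z-\mathcal{J}_{\gamma\mathcal{A}}(\mathcal{I}-\gamma\mathcal{B})z$: by Lemma \ref{relation-AB} it suffices to show $\mathcal{R}$ is metrically subregular at $z^*$ for $0\in\mathcal{R}(z^*)$, and by Lemma \ref{projective-EB} it is enough to verify the projection-type error bound \eqref{projective-EB-eq1} for $\mathcal{R}$. Writing $\mathcal{A}=\partial\phi$ with $\phi$ the $\ell_p$ norm ($p\in[1,2]\cup\{\infty\}$) or a polyhedral convex function in (C3), the nuclear norm in (C4), and noting that $\mathcal{B}$ is, up to the constant $C$, the gradient of the smooth convex map $z\mapsto f(\mathcal{E}z)$ with $f$ strongly convex and gradient Lipschitz, we see that $0\in\mathcal{F}z$ is exactly the first-order optimality condition of the structured convex program $\min_z\ \phi(z)+f(\mathcal{E}z)+\langle C,z\rangle$, and $\mathcal{R}$ is precisely its proximal-gradient residual with step $\gamma$. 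The Luo--Tseng/Tseng error bound \cite{Tseng95} and its extensions by Zhou and So \cite{ZhS15,ZhZhS15} state that, for exactly these choices of the nonsmooth part $\phi$, such a residual provides a local error bound for the distance to the optimal set --- for (C4) under the additional nondegeneracy condition $-\mathcal{B}x\in{\rm ri}(\partial\|x\|_*)$. Invoking these results gives \eqref{projective-EB-eq1}, and then Lemmas \ref{projective-EB} and \ref{relation-AB} yield the metric subregularity of $\mathcal{F}$ at $z^*$.

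\textbf{Main obstacle.} The only non-routine content lies in (C3)--(C4): one has to check that composing the strongly convex $f$ with the possibly rank-deficient $\mathcal{E}$ still places the problem in the Luo--Tseng class (it does, since the error bound needs strong convexity only of the outer function together with the $\ell_p$/polyhedral/nuclear structure of $\phi$), that the quoted bounds are uniform on a full neighbourhood of $z^*$ rather than merely along subsequences, and, in the nuclear-norm case, that $-\mathcal{B}x\in{\rm ri}(\partial\|x\|_*)$ is precisely the strict-complementarity hypothesis under which Zhou--So obtain their bound. The strongly monotone and polyhedral cases, and the reductions through Lemmas \ref{relation-AB} and \ref{projective-EB}, are standard and pose no difficulty.
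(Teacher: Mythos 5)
Your proposal is correct and follows essentially the route the paper intends: the paper omits the proof, deferring to \cite{Tseng95,ZhS15,ZhZhS15} together with the transfer machinery of Lemmas \ref{relation-AB} and \ref{projective-EB}, which is exactly the chain you use for (C3)--(C4), while your direct strong-monotonicity argument for (C1) and the Robinson polyhedral-multifunction argument for (C2) are the standard ways of filling in the cases the paper leaves implicit.
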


  To end this subsection, we make some comments on the metric subregularity of $\mathcal{S}_{\gamma,\mathcal{A},\mathcal{B}}$
  at a point $x^*$ with $0\in \mathcal{S}_{\gamma,\mathcal{A},\mathcal{B}}(x^*)$.
  Up to now, we are not clear whether the metric subregularity of $\mathcal{S}_{\gamma,\mathcal{A},\mathcal{B}}$
  at $(x^*,0) \in {\rm gph}\,(\mathcal{S}_{\gamma,\mathcal{A},\mathcal{B}})$ is weaker than that of
  $\mathcal{F}\!:=\!\mathcal{A}\!+\!\mathcal{B}$ at $(z^*,0)\in{\rm gph}\,\mathcal{F}$ or not
  when $\mathcal{A}$ or $\mathcal{B}$ is single-valued and Lipschitz continuous.
  The following proposition gives a sufficient condition to guarantee
  the  metric subregularity of $\mathcal{S}_{\gamma,\mathcal{A},\mathcal{B}}$
  at a point $(x^*,0) \in {\rm gph}\,(\mathcal{S}_{\gamma,\mathcal{A},\mathcal{B}})$.
  Its proof is also provided in the appendix.
  \begin{proposition}\label{prop4.1}
   If $\mathcal{F}$ is strongly monotone with constant $\alpha>0$ and one of $\mathcal{A}$ and
   $\mathcal{B}$ is single-valued and Lipschitz continuous with modulus $\beta>0$, then
   \begin{equation}\label{GDRS1-result}
     \mathcal{S}_{\gamma,\mathcal{A},\mathcal{B}}^{-1}(w)\subseteq\mathcal{S}_{\gamma,\mathcal{A},\mathcal{B}}^{-1}(0)
     + \kappa\|w\|\mathbb{B}\quad{\rm for}\ w\in\mathbb{X},
   \end{equation}
   This implies that $\mathcal{S}_{\gamma,\mathcal{A},\mathcal{B}}$
   is metrically subregular at $(x^*,0)\in{\rm gph}\,(\mathcal{S}_{\gamma,\mathcal{A},\mathcal{B}})$.
  \end{proposition}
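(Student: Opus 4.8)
The plan is to use the identity $\mathcal{T}=\mathcal{J}_{\mathcal{S}_{\gamma,\mathcal{A},\mathcal{B}}}$ from \cite[Theorem 5]{EB92}, which gives $\mathcal{S}_{\gamma,\mathcal{A},\mathcal{B}}=\mathcal{T}^{-1}-\mathcal{I}$, so that, since $\mathcal{T}$ is single-valued and nonexpansive, $x\in\mathcal{S}_{\gamma,\mathcal{A},\mathcal{B}}^{-1}(w)$ is equivalent to $x=\mathcal{T}(x+w)$. I would set $p:=x+w$ and unfold one step of the generalized DRS iteration \eqref{GDRS} from $p$ with $\lambda=1$: with $z:=\mathcal{J}_{\gamma\mathcal{B}}(p)$ and $y:=\mathcal{J}_{\gamma\mathcal{A}}(2z-p)$ one has $x=\mathcal{T}(p)=p+(y-z)$, hence $w=z-y$, together with the resolvent inclusions $\gamma^{-1}(p-z)\in\mathcal{B}(z)$ and $\gamma^{-1}(2z-p-y)\in\mathcal{A}(y)$. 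Since $\mathcal{F}$ is strongly monotone, $\mathcal{F}^{-1}(0)=\{z^*\}$ is a singleton; and by Lemma \ref{zero-point1}(a) the set $\mathcal{S}_{\gamma,\mathcal{A},\mathcal{B}}^{-1}(0)=\mathrm{Fix}\,\mathcal{T}$ equals $(\mathcal{I}+\gamma\mathcal{B})(z^*)$ when $\mathcal{B}$ is single-valued, while by Lemma \ref{zero-point1}(b) it equals $(\mathcal{I}-\gamma\mathcal{A})(z^*)$ when $\mathcal{A}$ is single-valued.

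In the case that $\mathcal{B}$ is single-valued and $\frac1\beta$-Lipschitz (read here as Lipschitz with modulus $\beta$), single-valuedness gives $\gamma^{-1}(2z-p-y)=\gamma^{-1}w-\gamma^{-1}(p-z)=\gamma^{-1}w-\mathcal{B}(z)$, and adding $\mathcal{B}(y)$ yields $\gamma^{-1}w+\mathcal{B}(y)-\mathcal{B}(z)\in\mathcal{A}(y)+\mathcal{B}(y)=\mathcal{F}(y)$. Applying strong monotonicity of $\mathcal{F}$ between $y$ and $z^*$ (with $0\in\mathcal{F}(z^*)$) gives $\alpha\|y-z^*\|^2\le\|y-z^*\|\big(\gamma^{-1}\|w\|+\|\mathcal{B}(y)-\mathcal{B}(z)\|\big)$, and since $\|\mathcal{B}(y)-\mathcal{B}(z)\|\le\beta\|y-z\|=\beta\|w\|$ we obtain $\|y-z^*\|\le\frac{\gamma^{-1}+\beta}{\alpha}\|w\|$, hence $\|z-z^*\|\le\big(1+\frac{\gamma^{-1}+\beta}{\alpha}\big)\|w\|$. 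Then I would reconstruct $x$: from $p=z+\gamma\mathcal{B}(z)$ and $x=p-w$ we get $x=z+\gamma\mathcal{B}(z)-w$, while $x^*:=z^*+\gamma\mathcal{B}(z^*)\in\mathcal{S}_{\gamma,\mathcal{A},\mathcal{B}}^{-1}(0)$; the triangle inequality and the Lipschitz bound give $\|x-x^*\|\le(1+\gamma\beta)\|z-z^*\|+\|w\|\le\kappa\|w\|$ with $\kappa:=(1+\gamma\beta)\big(1+\frac{\gamma^{-1}+\beta}{\alpha}\big)+1$. Thus $\mathrm{dist}\big(x,\mathcal{S}_{\gamma,\mathcal{A},\mathcal{B}}^{-1}(0)\big)\le\|x-x^*\|\le\kappa\|w\|$, which is exactly \eqref{GDRS1-result}; taking the infimum over $w\in\mathcal{S}_{\gamma,\mathcal{A},\mathcal{B}}(x)$ turns this into $\mathrm{dist}\big(x,\mathcal{S}_{\gamma,\mathcal{A},\mathcal{B}}^{-1}(0)\big)\le\kappa\,\mathrm{dist}\big(0,\mathcal{S}_{\gamma,\mathcal{A},\mathcal{B}}(x)\big)$ for every $x$ in the domain, hence in particular near $x^*$, i.e. $\mathcal{S}_{\gamma,\mathcal{A},\mathcal{B}}$ is metrically subregular at $(x^*,0)$.

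When instead $\mathcal{A}$ is single-valued and $\frac1\beta$-Lipschitz, I would run the symmetric argument: single-valuedness of $\mathcal{A}$ turns the second inclusion into $\mathcal{A}(y)=\gamma^{-1}w-\gamma^{-1}(p-z)$, so $\gamma^{-1}(p-z)=\gamma^{-1}w-\mathcal{A}(y)\in\mathcal{B}(z)$, and adding $\mathcal{A}(z)$ gives $\gamma^{-1}w+\mathcal{A}(z)-\mathcal{A}(y)\in\mathcal{F}(z)$; strong monotonicity between $z$ and $z^*$ then yields $\|z-z^*\|\le\frac{\gamma^{-1}+\beta}{\alpha}\|w\|$. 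Using $p-z=w-\gamma\mathcal{A}(y)$, hence $x=p-w=z-\gamma\mathcal{A}(y)$, together with $x^*:=z^*-\gamma\mathcal{A}(z^*)\in\mathcal{S}_{\gamma,\mathcal{A},\mathcal{B}}^{-1}(0)$ from Lemma \ref{zero-point1}(b), the same bookkeeping gives $\|x-x^*\|\le(1+\gamma\beta)\|z-z^*\|+\gamma\beta\|w\|\le\kappa'\|w\|$, and the conclusion follows as above.

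I expect the main obstacle to be the algebraic bookkeeping rather than anything conceptual: correctly translating $x\in\mathcal{S}_{\gamma,\mathcal{A},\mathcal{B}}^{-1}(w)$ through the composite DRS operator, keeping the sign $w=z-y$ straight, and—crucially—combining the two monotone inclusions, which sit at the two different base points $y$ and $z$, into a single element of $\mathcal{F}$ at one point by adding and subtracting the appropriate Lipschitz term. Once such an element of $\mathcal{F}$ with norm $O(\|w\|)$ is produced, strong monotonicity (which also forces $\mathcal{F}^{-1}(0)$ to be a singleton) does the real work, and the final constant $\kappa$ comes out of routine triangle-inequality estimates.
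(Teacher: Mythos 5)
Your proof is correct and is essentially the paper's own argument: after translating $x\in\mathcal{S}_{\gamma,\mathcal{A},\mathcal{B}}^{-1}(w)$ into the pair of resolvent inclusions at the two DRS points, you produce the same element $\gamma^{-1}w+\mathcal{B}y-\mathcal{B}z\in\mathcal{F}y$ (resp. $\gamma^{-1}w+\mathcal{A}z-\mathcal{A}y\in\mathcal{F}z$), invoke strong monotonicity of $\mathcal{F}$ (the paper phrases this as Lipschitz continuity of $\mathcal{F}^{-1}$ with modulus $\alpha^{-1}$), and pass back to ${\rm Fix}\,\mathcal{T}$ via Lemma \ref{zero-point1}, arriving at the same constant up to trivial regrouping. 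The only difference is that you reach the points $(z,y)$ by unfolding $\mathcal{T}=\mathcal{J}_{\mathcal{S}_{\gamma,\mathcal{A},\mathcal{B}}}$ rather than reading them off the explicit set formula \eqref{Soperator}, which is purely cosmetic.
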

 \section{Toy examples}
 In this section, we first consider the following nonsmooth convex optimization problems
 \begin{align}\label{convex-prob}
 \min f(x)+g(\mathcal{D}x)
 \end{align}
 where $f\!:\!\mathbb{Z}\to [-\infty,+\infty]$ and $g\!:\!\mathbb{Y}\to [-\infty,+\infty]$
 are low semicontinuous convex function, and $\mathcal{D}:\mathbb{Z}\to\mathbb{Y}$ is linear operator.
 Notice that the problem (\ref{convex-prob}) embodies an abundance of popular applications such as
 famous Rudin-Osher-Fatemi (ROF) denoising model \cite{ROF92}, $TVL_1$ minimization model \cite{CE05},
 the convex image segmentation model \cite{CEN06,GBO10} and the $\ell_1/\ell_1$-regularization model \cite{CTY13}.
 It is obvious that any optimal solution of \eqref{convex-prob} satisfies the inclusion
 $0\in \partial f(x)+\mathcal{D}^{*}\partial g(\mathcal{D}x)$.
 Involved in the dual variable $y\in \mathbb{Y}$, the above inclusion can be reformulated as the inclusion
 $(0,0) \in (\mathcal{T}_1+\mathcal{T}_2)(x,y)$ with
 \begin{align}\label{KKT-includ}
  \left\{
     \begin{array}{ll}
       \mathcal{T}_1(x,y) = (\mathcal{D}^{*}y, -\mathcal{D}x) & \hbox{} \\
       \mathcal{T}_2(x,y) = (\partial f(x),\partial g^*(y)) & \hbox{}
     \end{array}
   \right.
 \end{align}
 It is obvious that $\mathcal{T}_1$ is single valued, Lipschitz continuous and affine operator.
 Hence, we can apply the generalized DRS algorithm for the
 inclusion \eqref{KKT-includ} with following iterations
 \begin{subnumcases}{}\label{GDRS-convex1}
   x_{1}^{k} = (\mathcal{I}+\gamma^2\mathcal{D}^*\mathcal{D})^{-1}(z_{1}^{k}-\gamma\mathcal{D}^*z_{2}^{k})\\
   \label{GDRS-convex2}
   y_{1}^{k} = (\mathcal{I}+\gamma^2\mathcal{D}\mathcal{D}^*)^{-1}(\gamma\mathcal{D}z_{1}^{k}+z_{2}^{k})\\
   \label{GDRS-convex3}
   x_2^{k} = \min f(x_2)+\|x_2-(2x^{k}_1-z^{k}_1)\|^2/(2\gamma) \\
   \label{GDRS-convex4}
   y_2^{k} = \min g^*(y_2)+\|y_2-(2y^{k}_1-z^{k}_2)\|^2/(2\gamma)\\
   \label{GDRS-convex5}
   z_1^{k+1} = z_1^{k}+\lambda_k(x_2^{k}-x_1^{k}) \\
   \label{GDRS-convex6}
   z_2^{k+1} = z_2^{k}+\lambda_k(y_2^{k}-y_1^{k})
  \end{subnumcases}
  with $\gamma>0$ and relaxation parameter $\lambda_{k}\in (0,2)$.
  In the following, we specify model \eqref{convex-prob} as the $\ell_1/\ell_1$-regularization model proposed by Chan et al. \cite{CTY13}
  with the following form
  \begin{align}\label{L1L1}
  \min_{x}\, \|Ax-b\|_1 + \lambda\|x\|_1
  \end{align}
  \[
    \min_{x}\, \|Ax-b\|^2 + \lambda\|x\|_1
  \]
   Now, we apply the above generalized DRS algorithm \eqref{GDRS-convex1}-\eqref{GDRS-convex6} to the
   above $\ell_1/\ell_1$ regularization minimization \eqref{L1L1}. In this case,  we get that
  $\mathcal{T}_1(x,y) = (A^Ty, b-Ax)$ is singled valued and Lipschitz continuous operator and
  $\mathcal{T}_2(x,y) = (\partial \|x\|_{1}, N_{\|y\|_{\infty}\leq 1}(y))$ is polyhedral operator.
   By condition (C2) in Proposition \ref{example}, we know that the algorithm \eqref{GDRS-convex1}-\eqref{GDRS-convex6}
   converges linearly when it is applied to the $\ell_1/\ell_1$ regularization minimization \eqref{L1L1}.
   In the following, we verify this linear convergence result by the $\ell_1/\ell_1$-regularization
   minimization with random generated sensing matrix $A$ and regularization parameter $\lambda = 1$. The Figure \ref{figure-l1l1} shows
   that numerical performance of generalized DRS algorithm \eqref{GDRS-convex1}-\eqref{GDRS-convex6}
   when it is applied to the problem \eqref{L1L1}, which is coincided with Theorem \ref{GDRS-convergence}
   that the prime-dual points sequences $\{(x^k,y^k)\}$ and sequences $\{(z_1^k,z_2^k)\}$converge linearly.
   \begin{figure}
   \begin{minipage}{0.5\linewidth}
   \centering
   \includegraphics[width=3.1in]{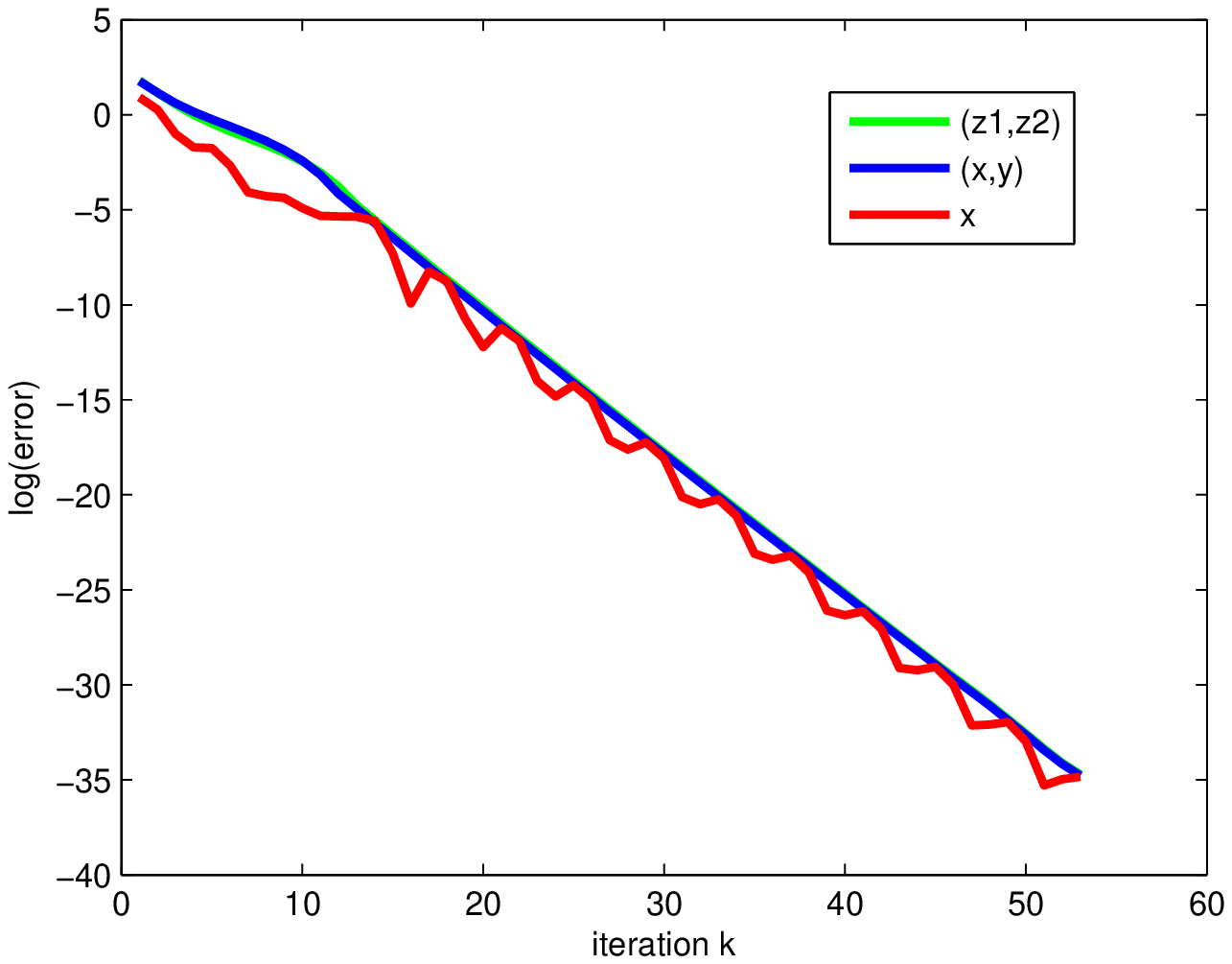}
   \caption{\small $\ell_1/\ell_1$-regularization minimization}
   \label{figure-l1l1}
   \end{minipage}
   \begin{minipage}{0.5\linewidth}
   \centering
   \includegraphics[width=3.1in]{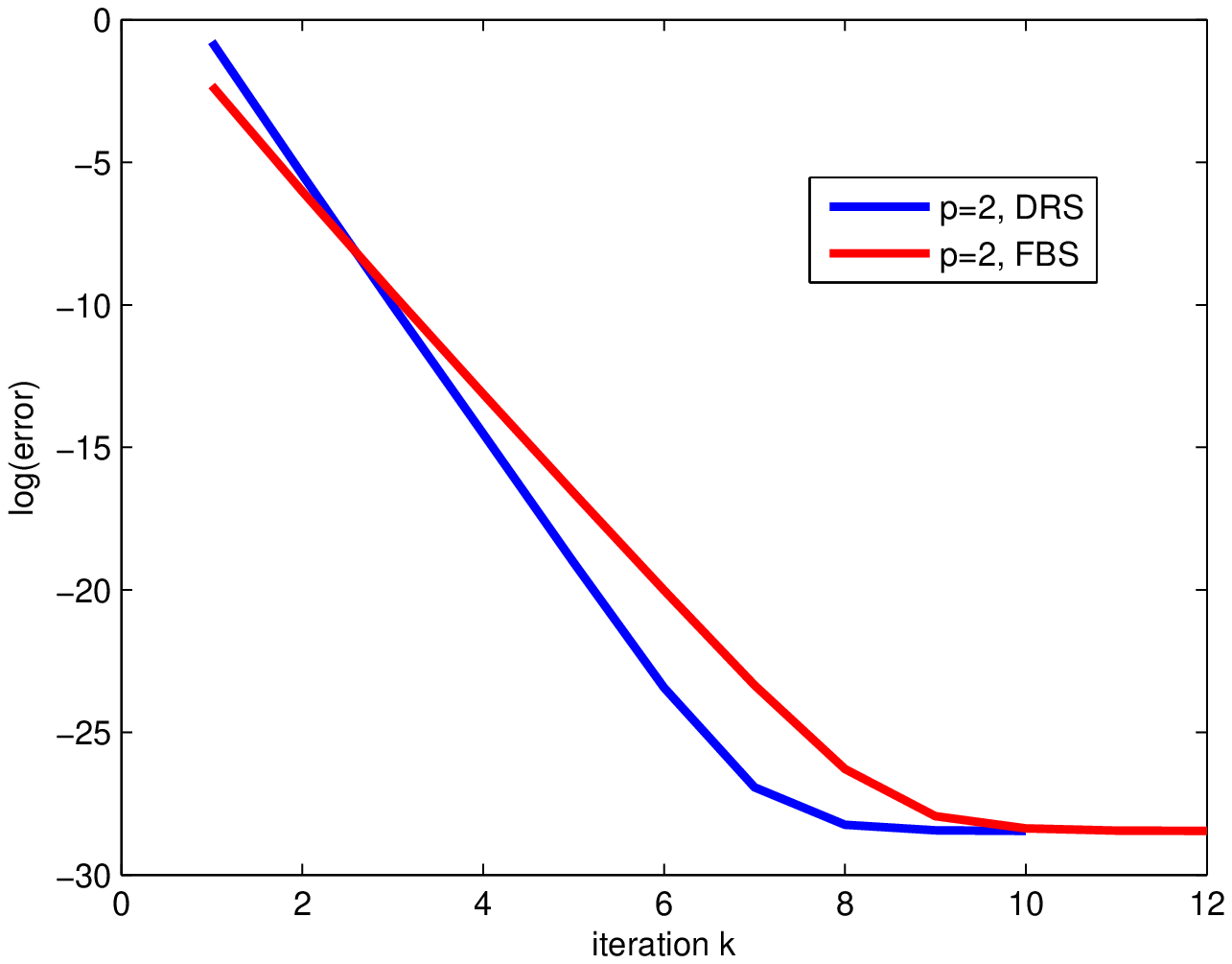}
   \caption{\small $\ell_p$-regularization minimization}
   \label{figure-lp}
   \end{minipage}
   \end{figure}

 Next, we consider the $\ell_{p}$ norm regularization problem with $p\in [1,2]\bigcup\{+\infty\}$.
 \begin{align}\label{convex-lp}
 \min f(\mathcal{D}x)+g(x)
 \end{align}
 where $f\!:\!\mathbb{Y}\to[-\infty,+\infty]$ is a strongly convex and gradient Lipschitz continuous function and
 $\mathcal{D}\!:\!\mathbb{X}\to\mathbb{Y}$ is a linear operator, and $g(x)=\sum_{J\in \mathcal{J}}w_{J}\|x_{J}\|_{p}$
 where $\mathcal{J}$ is a non-overlapping divisibility of index set $\{1,2,\cdots,n\}$ and
 $w_{J}>0$, and $\|x_{J}\|_{p}$ denotes the $l_{p}$ norm defined by $\|x_{J}\|_{p} = (\sum_{i=1}^{J}\|x_{i}\|^{p})^{\frac{1}{p}}$ .
 This problem \eqref{convex-lp} incorporates massive applications such as
 Group-lasso regularization \cite{YL06}, $\ell_{1,p}$-regularization regression \cite{FR08,EKB10} and the referees in \cite{ZhZhS15}.
 Here, we consider the following $\ell_{p}$ norm regularization problem
  \begin{equation}\label{easy-lp}
 \min_{x}\, \frac{1}{2}\|Ax-b\|^2+\lambda\|x\|_{p}
 \end{equation}
 Now, we apply the over-relaxed forward backward splitting algorithm \eqref{RFBS} and the generalized
 Douglas Rachford splitting algorithm \eqref{GDRS} to the problem \eqref{easy-lp}
 with random generated sensing matrix $A$, regularized parameter $\lambda = 1$ and
 $p=2$, respectively. The figure \ref{figure-lp} shows
 performance of over-relaxed FBS and generalized DRS algorithm
 when they are applied to the toy example \eqref{easy-lp}, which is
 coincided with Theorem \ref{convergence-RFBS}, \ref{GDRS-convergence}.

%
  \section{Conclusion}

  In this paper, for the inclusion problem \eqref{composite-inclusion}, we have established
  the linear convergence rate of the generalized PPA and several popular splitting algorithms
  under the metric subregularity of the composite operator, which is much weaker than
  the existing ones that almost all require the strong monotonicity of the composite operator.
  Some sufficient condition are provided to ensure the metric subregularity of the composite operator $\mathcal{F}$ holds
  at a point $(z^*,0)\in {\rm gph}\,\mathcal{F}$. The preliminary numerical performances also
  support the theoretical results.

 \bigskip
 \noindent
 {\bf\large Appendix}

 \medskip
 \noindent
{\bf Proof of Lemma \ref{zero-point1}: }
  Part (a) directly follows from \cite[Proposition 25.1(ii)]{BC11}.
  We next make use of $\mathcal{S}_{\gamma,\mathcal{A},\mathcal{B}}$ in \cite{EB92}
   to prove the inclusion of part (b), where $\mathcal{S}_{\gamma,\mathcal{A},\mathcal{B}}$ is defined by
   \begin{equation}\label{Soperator}
    \mathcal{S}_{\gamma,\mathcal{A},\mathcal{B}}=\Big\{(v+\gamma b,u-v)\ |\ (u,b)\in\mathcal{B},\,(v,a)\in\mathcal{A},\,v+\gamma a=u-\gamma b\Big\}.
   \end{equation}
   From \cite[Theorem 5]{EB92}, it follows that
   \(
    {\rm Fix}\mathcal{T}=\mathcal{S}_{\gamma,\mathcal{A},\mathcal{B}}^{-1}(0)=\bigcup_{u\in\mathbb{X}}\big[u+\gamma(-\mathcal{A}u\cap\mathcal{B}u)\big].
   \)
   Let $x$ be an arbitrary point from ${\rm Fix}\mathcal{T}$. Then,
   there exist $u\in\mathbb{X}$ and $b\in (-\mathcal{A}u\cap\mathcal{B}u)$ such that
   $x=u+\gamma b\in(\mathcal{I}-\gamma\mathcal{A})(u)$. Clearly, $0\in\mathcal{A}u+\mathcal{B}u=\mathcal{F}u$,
   i.e., $u\in\mathcal{F}^{-1}(0)$. Hence,
   \[
     x\in{\textstyle\bigcup_{z\in\mathcal{F}^{-1}(0)}}(\mathcal{I}-\gamma\mathcal{A})z=(\mathcal{I}-\gamma\mathcal{A})(\mathcal{F}^{-1}(0)).
   \]
   The inclusion then follows from the arbitrariness of $x$ in the set ${\rm Fix}\mathcal{T}$.

   \medskip

   Now assume that $\mathcal{A}$ is single-valued. To establish the equality, it suffices to argue that
   $(\mathcal{I}-\gamma\mathcal{A})(\mathcal{F}^{-1}(0))\subseteq{\rm Fix}\mathcal{T}$.
   Let $x$ be an arbitrary point from $(\mathcal{I}-\gamma\mathcal{A})(\mathcal{F}^{-1}(0))$. Then there exist
   $z\in\mathcal{F}^{-1}(0)$ and $y=\mathcal{A}z$ such that $x=z-\gamma y$.
   Since $0\in\mathcal{A}z+\mathcal{B}z$, we have $-\mathcal{A}z=-y\in\mathcal{B}z$.
   Thus, $x\in z+\gamma\mathcal{B}z$, which by $z\in\mathcal{F}^{-1}(0)$ and part (a)
   implies that $x\in{\rm Fix}\mathcal{T}$. The inclusion follows by
   the arbitrariness of $x$ in $(\mathcal{I}-\gamma\mathcal{A})(\mathcal{F}^{-1}(0))$.
   Hence, the proof is completed.
   \qquad\qquad\qquad\qquad\qquad\qquad\qquad\qquad\qquad\qquad\qquad\qquad\qquad\qquad\quad $\Box$

 \bigskip

  \bigskip

 \noindent
{\bf Proof of Lemma \ref{zero-point2}: }
  Part (a) follows from \cite[Lemma 3.2]{Davis15}. It suffices to prove part (b).
  From \cite[Lemma 3.2]{Davis15}, it follows that
   \(
    {\rm Fix}\mathcal{T}=\bigcup_{u\in\mathcal{F}^{-1}(0)}\big[u+\gamma((-\mathcal{A}u-\mathcal{C}u)\cap\mathcal{B}u)\big].
   \)
   Let $x$ be an arbitrary point from ${\rm Fix}\mathcal{T}$. Then, there exist
   $u\in\mathcal{F}^{-1}(0)$ and $b\in(-\mathcal{A}u-\mathcal{C}u)\cap\mathcal{B}u$ such that
   $x=u+\gamma b\in\big[\mathcal{I}-\gamma(\mathcal{A}+\mathcal{C})\big]u$.
   This immediately implies that
   \[
     x\in\bigcup_{z\in\mathcal{F}^{-1}(0)}\big[\mathcal{I}-\gamma(\mathcal{A}+\mathcal{C})\big]z
     =\big[\mathcal{I}-\gamma(\mathcal{A}+\mathcal{C})\big](\mathcal{F}^{-1}(0)).
   \]
   By the arbitrariness of $x$ in ${\rm Fix}\mathcal{T}$, the inclusion follows.
   Now assume that $\mathcal{A}$ is single-valued. We only need to argue that
   $\big[\mathcal{I}-\gamma(\mathcal{A}+\mathcal{C})\big](\mathcal{F}^{-1}(0))\subseteq{\rm Fix}\mathcal{T}$.
    Let $x$ be an arbitrary point
   from $\big[\mathcal{I}-\gamma(\mathcal{A}+\mathcal{C})\big](\mathcal{F}^{-1}(0))$.
   Then there exist $z\in\mathcal{F}^{-1}(0)$ and $y=(\mathcal{A}+\mathcal{C})z$ such that $x=z-\gamma y$.
   Since $0\in\mathcal{A}z+\mathcal{B}z+\mathcal{C}z$, we have $-(\mathcal{A}+\mathcal{C})z=-y\in\mathcal{B}z$.
   Thus, $x\in z+\gamma\mathcal{B}z$. This, along with $z\in\mathcal{F}^{-1}(0)$ and part (a),
   implies that $x\in{\rm Fix}\mathcal{T}$. By the arbitrariness of $x$ in
   $\big[\mathcal{I}-\gamma(\mathcal{A}+\mathcal{C})\big](\mathcal{F}^{-1}(0))$,
   the inclusion follows.  \qquad\qquad\qquad\qquad $\Box$

 \bigskip

  \bigskip

 \noindent
{\bf Proof of Proposition \ref{prop4.1}: }
   Let $x$ be an arbitrary point from ${\rm dom}\,\mathcal{S}_{\gamma,\mathcal{A},\mathcal{B}}$.
   Since $\mathcal{S}_{\gamma,\mathcal{A},\mathcal{B}}$ is maximal monotone (see \cite[Theorem 4]{EB92}),
   the set $\mathcal{S}_{\gamma,\mathcal{A},\mathcal{B}}(x)$ is closed convex by \cite[Exercise 12.8]{RW98}.
   In the following, we proceed the arguments by two cases as shown below.

   \medskip
   \noindent
   {\bf Case 1: $\mathcal{A}$ is single-valued and Lipschitz continuous with modulus $\beta>0$}.
   From the closed convexity of $\mathcal{S}_{\gamma,\mathcal{A},\mathcal{B}}(x)$ and
   the expression of $\mathcal{S}_{\gamma,\mathcal{A},\mathcal{B}}$,
   there exist $u,b,v\in\mathbb{X}$ with $(u,b)\in\mathcal{B}$ and $v+\gamma \mathcal{A}v=u-\gamma b$
   such that $u-v={\rm dist}(0,\mathcal{S}_{\gamma,\mathcal{A},\mathcal{B}}(x))$ for $x=v+\gamma b$. So,
   \begin{equation}\label{GDRS1-ineq1}
    \gamma^{-1}(u-v)+(\mathcal{A}u-\mathcal{A}v)\in\mathcal{F}u.
   \end{equation}
   By Lemma \ref{zero-point1}(b) and the remark after it, we have
   \(
     \mathcal{S}_{\gamma,\mathcal{A},\mathcal{B}}^{-1}(0)=(\mathcal{I}-\gamma\mathcal{A})(\mathcal{F}^{-1}(0)).
   \)
   Thus,
   \begin{align}\label{GDRS1-ineq2}
    {\rm dist}(x,\mathcal{S}_{\gamma,\mathcal{A},\mathcal{B}}^{-1}(0))
    &={\rm dist}(u-\gamma\mathcal{A}v,\mathcal{S}_{\gamma,\mathcal{A},\mathcal{B}}^{-1}(0))
    \le \|u-\gamma\mathcal{A}v-z^*+\gamma\mathcal{A}z^*\|\nonumber\\
    &\le\|u-z^*\|+\gamma \beta \|v-z^*\|\le (1+\gamma \beta)\|u-z^*\|+\gamma\beta\|u-v\|,
   \end{align}
   where $z^*$ is an arbitrary point from $\mathcal{F}^{-1}(0)$.
   Since $\mathcal{F}=\mathcal{A}+\mathcal{B}$ is strongly monotone with constant $\alpha$,
   $\mathcal{F}^{-1}$ is single-valued and Lipschitz continuous with modulus $\alpha^{-1}$, i.e.,
   \begin{equation}\label{strong-m}
     \|\mathcal{F}^{-1}(w)-\mathcal{F}^{-1}(0)\|\le \alpha^{-1}\|w\|\quad\ \forall w\in\mathbb{X}.
   \end{equation}
   Let $\overline{w}\in \mathcal{F}u$ be such that $\|\overline{w}\|={\rm dist}(0,\mathcal{F}u)$.
   Then, by the last inequality, it follows that
   \[
     \|u-z^*\|=\|\mathcal{F}^{-1}(\overline{w})-\mathcal{F}^{-1}(0)\|\le \alpha^{-1}\|\overline{w}\|
     =\alpha^{-1}{\rm dist}(0,\mathcal{F}u).
   \]
   Combining this inequality with inequalities \eqref{GDRS1-ineq1} and \eqref{GDRS1-ineq2}, we have that
   \begin{align*}
   {\rm dist}(x,\mathcal{S}_{\gamma,\mathcal{A},\mathcal{B}}^{-1}(0))
   &\le \alpha^{-1}(1+\gamma \beta){\rm dist}(0,\mathcal{F}u)+\gamma \beta\|u-v\|\\
   &\le  \alpha^{-1}(1+\gamma\beta)\big\|\gamma^{-1}(u-v)+(\mathcal{A}u-\mathcal{A}v)\big\|+\gamma \beta\|u-v\|\\
   &\le \big[\alpha^{-1}(1+\gamma\beta)(\gamma^{-1}+\beta)+\gamma \beta\big]\|u-v\|\\
   &=\big[\alpha^{-1}(1+\gamma\beta)(\gamma^{-1}+\beta)+\gamma\beta\big]{\rm dist}(0,\mathcal{S}_{\gamma,\mathcal{A},\mathcal{B}}(x)).
   \end{align*}
   This, along with the arbitrariness  of $x$ in ${\rm dom}\,\mathcal{S}_{\gamma,\mathcal{A},\mathcal{B}}$,
   is equivalent to saying that
   \[
    \mathcal{S}_{\gamma,\mathcal{A},\mathcal{B}}^{-1}(w)\subseteq\mathcal{S}_{\gamma,\mathcal{A},\mathcal{B}}^{-1}(0)
     + \kappa\|w\|\mathbb{B}\quad{\rm for}\ w\in\mathbb{X}.
   \]
   Notice that $\mathcal{S}_{\gamma,\mathcal{A},\mathcal{B}}^{-1}(0)$ is singleton
   due to the strong monotonicity of $\mathcal{F}$. Together with the last inclusion,
   $\mathcal{S}_{\gamma,\mathcal{A},\mathcal{B}}$
   is metrically subregular at $(x^*,0)\in{\rm gph}\mathcal{S}_{\gamma,\mathcal{A},\mathcal{B}}$.

   \medskip
   \noindent
   {\bf Case 2: $\mathcal{B}$ is single-valued and Lipschitz continuous with modulus $\beta>0$}.
   From the closed convexity of $\mathcal{S}_{\gamma,\mathcal{A},\mathcal{B}}(x)$ and
   the expression of $\mathcal{S}_{\gamma,\mathcal{A},\mathcal{B}}$,
   there exist $u,v,a\in\mathbb{X}$ with $(v,a)\in\mathcal{A}$ and $v+\gamma a=u\!-\!\gamma\mathcal{B}u$
   such that $u-v={\rm dist}(0,\mathcal{S}_{\gamma,\mathcal{A},\mathcal{B}}(x))$ for $x=v\!+\!\gamma\mathcal{B}u$.
   So,
   \begin{equation}\label{GDRS1-ineq3}
    \gamma^{-1}(u-v)+(\mathcal{B}v-\mathcal{B}u)\in\mathcal{F}v.
   \end{equation}
   By Lemma \ref{zero-point1}(a) and the remark after it, we have
   \(
     \mathcal{S}_{\gamma,\mathcal{A},\mathcal{B}}^{-1}(0)=(\mathcal{I}+\gamma\mathcal{B})(\mathcal{F}^{-1}(0)).
   \)
   Thus,
   \begin{align}\label{GDRS1-ineq4}
    {\rm dist}(x,\mathcal{S}_{\gamma,\mathcal{A},\mathcal{B}}^{-1}(0))
    &={\rm dist}(v+\gamma\mathcal{B}u,\mathcal{S}_{\gamma,\mathcal{A},\mathcal{B}}^{-1}(0))
    \le \|v+\gamma\mathcal{B}u-z^*-\mathcal{B}z^*\|\nonumber\\
    &\le\|v-z^*\|+\gamma \beta \|u-z^*\|\nonumber\\
    &\le (1+\gamma\beta)\|v-z^*\|+\gamma \beta\|u-v\|,
   \end{align}
   where $z^*$ is an arbitrary point from $\mathcal{F}^{-1}(0)$.
   Since $\mathcal{F}=\mathcal{A}+\mathcal{B}$ is strongly monotone with constant $\alpha$,
   $\mathcal{F}^{-1}$ is single-valued and Lipschitz continuous with modulus $\alpha^{-1}$.
   Let $\overline{w}\in\mathcal{F}v$ be such that $\|\overline{w}\|={\rm dist}(0,\mathcal{F}v)$.
   Then, from \eqref{strong-m} it follows that
   \[
     \|v-z^*\|=\|\mathcal{F}^{-1}(\overline{w})-\mathcal{F}^{-1}(0)\|\le \alpha^{-1}\|\overline{w}\|
     =\alpha^{-1}{\rm dist}(0,\mathcal{F}v).
   \]
   Combining this inequality with inequalities \eqref{GDRS1-ineq3} and \eqref{GDRS1-ineq4}, we have that
   \begin{align*}
   {\rm dist}(x,\mathcal{S}_{\gamma,\mathcal{A},\mathcal{B}}^{-1}(0))
   &\le \alpha^{-1}(1+\gamma \beta){\rm dist}(0,\mathcal{F}v)+\gamma\beta\|u-v\|\\
   &\le  \alpha^{-1}(1+\gamma \beta)\big\|\gamma^{-1}(u-v)+(\mathcal{B}v-\mathcal{B}u)\big\|+\gamma \beta\|u-v\|\\
   &\le \big[\alpha^{-1}(1+\gamma \beta)(\gamma^{-1}+\beta)+\gamma \beta\big]\|u-v\|\\
   &=\big[\alpha^{-1}(1+\gamma \beta)(\gamma^{-1}+\beta)+\gamma\beta\big]{\rm dist}(0,\mathcal{S}_{\gamma,\mathcal{A},\mathcal{B}}(x)).
   \end{align*}
   The rest arguments are similar to those for Case 1, and we omit them. \qquad\qquad\qquad $\Box$


\begin{thebibliography}{1}

  \bibitem{ADG07}
  {\sc F. J. A.\ Artacho, A. L.\ Dontchev and M. H.\ Geoffroy},
  {\em Convergence of the proximal point method for metrically regular mappings},
  ESAIM: Proceedings, vol. 17, pp. 1-8, 2007.

  \bibitem{BC11}
  {\sc H. H.\ Bauschke and P. L.\ Combettes},
  {\em Convex analysis and monotone operator theory in hilbert spaces},
  Springer, 2011.

  \bibitem{BB95}
  {\sc H. H.\ Bauschke and J. M.\ Borwein},
  {\em On projection algorithms for solving convex feasibility problems},
  SIAM Review, vol. 38, pp. 367-426, 1995.

  \bibitem{BCL04}
  {\sc H. H.\ Bauschkea, P. L.\ Combettes and D. R.\ Lukec},
  {\em Finding best approximation pairs relative to two closed convex sets in Hilbert spaces},
  Journal of Approximation Theory, vol. 127, pp. 178-192, 2004.

  \bibitem{BNP15}
  {\sc H. H.\ Bauschke, D.\ Noll and H. M.\ Phan},
  {\em Linear and strong convergence of algorithms involving averaged nonexpansive operators},
   Journal of Mathematical Analysis and Applications, vol. 421, pp. 1-20, 2015.

  \bibitem{BPCPE12}
  {\sc S.\ Boyd, N.\ Parikh, E.\ Chu, B.\ Peleato and J.\ Eckstein},
  {\em Distributed optimization and statistical learning via the alternating direction method of multipliers},
  Foundations and Trends in Machine Learning, vol. 3, pp. 1-122, 2011.

  \bibitem{CE05}
  {\sc  T. F.\ Chan and S.\ Esedoglu},
  {\em  Aspects of total variation regularized $\ell_1$ function approximation},
  SIAM Journal on Applied Mathematics, vol. 65, pp. 1817-1837, 2005.

  \bibitem{CEN06}
  {\sc  T. F.\ Chan, S.\ Esedoglu and M.\ Nikolova},
  {\em  Algorithms for finding global minimizers of image segmentation and denoising models},
  SIAM Journal on Applied Mathematics, vol. 66, pp. 1632-1648, 2006.

  \bibitem{CTY13}
  {\sc R. H.\ Chan, M.\ Tao and X. M.\ Yuan},
  {\em Constrained total variation deblurring models and fast algorithms based on alternating direction method of multipliers},
  SIAM Journal on imaging Sciences, vol. 6, pp. 680-697, 2013.

  \bibitem{CRoc97}
  {\sc G. H-G.\ Chen and R. T.\ Rockafellar},
  {\em Convergence rates in forward-backward splitting},
  SIAM Journal on Optimization, vol. 7, pp. 1-25, 1997.

  \bibitem{CR06}
  {\sc P. L.\ Combettes and V. R.\ Wajs},
  {\em Signal recovery by proximal forward-backward splitting},
   Multiscale Modeling \& Simulation, vol. 4, pp. 1168-1200, 2006.

  \bibitem{CP11}
  {\sc P. L.\ Combettes and J. C.\ Pesquet},
  {\em Proximal splitting methods in signal processing},
  Fixed-Point Algorithms for Inverse Problems in Science and Engineering, vol. 49, pp. 185-212, 2011.

  \bibitem{CYuan14}
  {\sc E.\ Corman and X. M.\ Yuan},
  {\em A generalized proximal point algorithm and its convergence rate},
   SIAM Journal on Optimization, vol. 24, pp. 1614-1638, 2014.

  \bibitem{Davis151}
  {\sc D.\ Davis and W. T.\ Yin},
  {\em Convergence rate analysis of several splitting schemes},
   arXiv preprint arXiv:1406.4834, 2014.

  \bibitem{Davis152}
  {\sc D.\ Davis and W. T.\ Yin},
  {\em Faster convergence rates of relaxed Peaceman-Rachford and ADMM under regularity assumptions},
  arXiv preprint arXiv:1407.5210, 2014.

  \bibitem{Davis15}
  {\sc D.\ Davis and W. T.\ Yin},
  {\em A three-operator splitting scheme and its optimization application},
  arXiv preprint arXiv:1504.01032, 2015.

  \bibitem{DR09}
  {\sc A. L.\ Dontchev and R. T.\ Rockafellar},
  {\em Implicit functions and solution mappings-a view from variational analysis},
  Springer, 2009.

  \bibitem{DR56}
  {\sc J. Douglas and H. H.\ Rachford},
  {\em On the numerical solution of heat conduction problems in two and three space variables},
  Transactions of the American Mathematical Society,  vol. 82, pp. 421-439, 1956.

  \bibitem{EB92}
  {\sc J.\ Eckstein and D. P.\ Bertsekas},
  {\em On the Douglas-Rachford splitting method and the proximal point algorithm for maximal monotone operators},
  Mathematical Programming, vol. 55, pp. 293-318, 1992.

  \bibitem{EKB10}
  {\sc Y. C.\ Eldar, P.\ Kuppinger and H.\ B\"{o}lcskei},
  {\em Block-sparse signals: uncertainty relations and efficient recovery},
  IEEE Transactions on Signal Processing, vol. 58, pp. 3042-3054, 2010.

  \bibitem{FR08}
  {\sc M.\ Fornasier and H.\ Rauhut},
  {\em Recovery algorithms for vector-valued data with joint sparsity constraints},
  SIAM Journal on Numerical Analysis, vol. 46, pp. 577-613, 2008.

  \bibitem{Gabay83}
  {\sc D.\ Gabay},
  {\em Applications of the method of multipliers to variational inequalities},
  in: M. Fortin and R. Glowinski, eds., Agumented Lagrangian Methods:
  Applications to the Solution of Boundary-Value Problems (North-Holland, Amsterdam, 1983).

  \bibitem{Giselsson15}
  {\sc P.\ Giselsson},
  {\em Tight global linear convergence rate bounds for Douglas-Rachford splitting},
  arXiv preprint arXiv:1506.01556, 2015.

  \bibitem{GBO10}
  {\sc T.\ Goldstein, X.\ Bresson and S.\ Osher},
  {\em  Geometric applications of the split Bregman method: segmentation and surface reconstruction},
  Journal of Scientific Computing, vol. 45, pp. 272-293, 2010.

  \bibitem{PP90}
  {\sc P. T.\ Harker and J.-S.\ Pang},
  {\em Finite-dimensional variational inequality and nonlinear complementarity problems: A survey of theory, algorithms and applications},
  Mathematical Programming, vol. 48, pp. 161-220, 1990.

  \bibitem{HYuan14}
  {\sc B. S.\ He and X. M.\ Yuan},
  {\em On the convergence rate of Douglas-Rachford operator splitting method},
  Mathematical Programming, DOI 10.1007/s10107-014-0805-x.

  \bibitem{KM55}
  {\sc M.\ Krasnosel'skii},
  {\em Two remarks on the method of successive approximations},
  Uspekhi Matematicheskikh Nauk, vol. 10, pp. 123-127, 1955.

  \bibitem{Leventhal09}
  {\sc D.\ Leventhal},
  {\em Metric subregularity and the proximal point method},
   Journal of Mathematical Analysis and Applications, vol. 360, pp. 681-688, 2009.

  \bibitem{LFP15}
  {\sc J.\ W.\ Liang, J.\ M. Fadili and G.\ Peyr¨¦},
  {\em Convergence rates with inexact nonexpansive operators},
  Mathematical Programming, DOI 10.1007/s10107-015-0964, 2015.

  \bibitem{LM79}
  {\sc P. L.\ Lions and B.\ Mercier},
  {\em Splitting algorithms for the sum of two nonlinear operators},
  SIAM Journal on Numerical Analysis, vol. 16, pp. 964-979, 1979.

  \bibitem{Mar70}
  {\sc B.\ Martinet},
  {\em Regularisation d\'{o}in\'{e}quations variationnelles par approximations successives},
   Rev. Francaise Informat. Recherche Op\'{e}rationnelle, vol. 4, pp. 154-159, 1970.

  \bibitem{M53}
  {\sc W.\ R.\ Mann},
  {\em Mean value methods in iteration},
  Proceedings of the American Mathematical Society, vol. 4, pp. 506-510, 1953.

   \bibitem{Nesterov12}
  {\sc Y.\ Nesterov},
  {\em Gradient methods for minimizing composite functions},
  Mathematical Programming, vol. 140, pp. 125-161, 2012.

  \bibitem{Passty79}
  {\sc G. B.\ Passty},
  {\em Ergodic convergence to a zero of the sum of monotone operators in Hilbert space},
  Journal of Mathematical Analysis and Applications, vol. 72, pp. 383-390, 1979.

  \bibitem{PR54}
  {\sc D. W.\ Peaceman and H. H.\ Rachford},
  {\em The numerical solution of parabolic and elliptic differential equations},
  Journal of the Society for Industrial and Applied Mathematics,  vol. 3, pp. 28-41, 1954.

  \bibitem{Roc76}
  {\sc R. T.\ Rockafellar},
  {\em Monotone operators and the proximal point algorithm},
  SIAM Journal on Control and Optimization, vol. 14, pp. 877-898, 1976.

  \bibitem{RW98}
  {\sc R. T.\ Rockafellar and R. J-B.\ Wets},
  {\em Variational Analysis}, Springer, 1998.

  \bibitem{ROF92}
  {\sc L. I.\ Rudin, S.\ Osher and E.\ Fatemi},
  {\em Nonlinear total variation based noise removal algorithms},
  Physica D: Nonlinear Phenomena, vol. 60, pp. 259-268, 1992.
  
  \bibitem{TaoY16}
  {\sc M.\ Tao and X. M.\ Yuan},
  {\em On the optimal linear convergence rate of a generalized proximal point algorithm},
  ArXiv preprint, arXiv:1605.05474, 2016.
    
  \bibitem{Tseng95}
  {\sc Paul Tseng},
  {\em On linear convergence of iterative methods for the variational inequality problem},
  Journal of Computational and Applied Mathematics,  vol. 60, pp. 237-252, 1995.

  \bibitem{YL06}
  {\sc M.\ Yuan and Y.\ Lin},
  {\em Model selection and estimation in regression with grouped variables},
  Journal of the Royal Statistical Society: Series B (Statistical Methodology), vol. 68, pp. 49-67, 2006.

  \bibitem{ZhZhS15}
  {\sc Z. R.\ Zhou, Q.\ Zhang and A. M. C.\ So},
  {\em $\ell_{1,p}$-Norm Regularization: Error Bounds and Convergence Rate Analysis of First-Order Methods},
  Proceedings of the 32nd International Conference on Machine Learning (ICML 2015), pp. 1501-1510, 2015.

  \bibitem{ZhS15}
  {\sc Z. R.\ Zhou and A. M. C.\ So},
  {\em A Unified Approach to Error Bounds for Structured Convex Optimization Problems},
  arXiv preprint arXiv:1512.03518, 2015.

\end{thebibliography}
\end{document}